\theoremstyle{definition}
\newtheorem{mydef}{Definition}[section]
\newtheorem{abstractdef}{Definition}
\newtheorem{athm}[abstractdef]{Theorem}
\newtheorem{lem}[mydef]{Lemma}
\newtheorem{thm}[mydef]{Theorem}
\newtheorem{conjecture}[mydef]{Conjecture}
\newtheorem{hypothesis}[mydef]{Hypothesis}
\newtheorem{prop}[mydef]{Proposition}
\newtheorem{defin}[mydef]{Definition}
\newtheorem{remark}[mydef]{Remark}
\newtheorem{notation}[mydef]{Notation}
\newtheorem{fact}[mydef]{Fact}
\newcommand{\fct}[2]{{}^{#1}#2}
\newcommand{\ba}{\bar{a}}
\newcommand{\bb}{\bar{b}}
\newcommand{\bx}{\bar{x}}
\newcommand{\by}{\bar{y}}
\newcommand{\Ksatpp}[2]{{#1}^{#2\text{-sat}}}
\newcommand{\Ksatp}[1]{\Ksatpp{\K}{#1}}
\newcommand{\sea}{\mathfrak{C}}
\newcommand{\cf}[1]{\text{cf} (#1)}
\newcommand{\seq}[1]{\langle #1 \rangle}
\newcommand{\rest}{\upharpoonright}
\newcommand{\s}{\mathfrak{s}}
\newcommand{\wkperp}{\underset{\text{wk}}{\perp}}
\newcommand{\K}{\mathcal{K}}
\def\lea{\le}
\def\gea{\ge}
\newbox\noforkbox \newdimen\forklinewidth
\noforkbox\hbox{\lower 2pt\box1\lower
2pt\box0\relax}
\def\unionstick{\mathop{\copy\noforkbox}\limits}
\newcommand{\nf}{\unionstick}
\newbox\doesforkbox
\doesforkbox\hbox{\lower 0pt\box1 \lower
2pt\box2\lower2pt\box0\relax}
\def\1nf{\unionstick^{(1)}}
\def\2nf{\unionstick^{(2)}}
\def\3nf{\unionstick^{(3)}}
\newcommand{\tp}{\text{tp}}
\newcommand{\gtp}{\text{gtp}}
\newcommand{\gS}{\text{gS}}
\newcommand{\Sbs}{\gS^\text{bs}}
\newcommand{\hanf}[1]{h (#1)}
\newcommand{\goodp}{\text{good}^+}
\newcommand{\LS}{\operatorname{LS}}
\newcommand{\negp}{\neg^\ast p}
\newcommand{\Kneg}{\K_{\negp}}
\newcommand{\Av}{\text{Av}}
\title[Categoricity in AECs with primes]{Shelah's eventual categoricity conjecture in tame AECs with primes}
\date{\today\\
AMS 2010 Subject Classification: Primary 03C48. Secondary: 03C45, 03C52, 03C55, 03C75, 03E55.}
\keywords{Abstract elementary classes; Categoricity; Good frames; Classification theory; Tameness; Prime models; Homogeneous model theory}
\author{Sebastien Vasey}
\email{sebv@cmu.edu}
\urladdr{http://math.cmu.edu/\textasciitilde svasey/}
\address{Department of Mathematical Sciences, Carnegie Mellon University, Pittsburgh, Pennsylvania, USA}
\thanks{This material is based upon work done while the author was supported by the Swiss National Science Foundation under Grant No.\ 155136.}
\begin{document}

\begin{abstract}

A new case of Shelah's eventual categoricity conjecture is established:

\begin{athm}\label{abstract-thm}
  Let $\K$ be an AEC with amalgamation. Write $H_2 := \beth_{\left(2^{\beth_{\left(2^{\LS (\K)}\right)^+}}\right)^+}$. Assume that $\K$ is $H_2$-tame and $\K_{\ge H_2}$ has primes over sets of the form $M \cup \{a\}$. If $\K$ is categorical in some $\lambda > H_2$, then $\K$ is categorical in all $\lambda' \ge H_2$.
\end{athm}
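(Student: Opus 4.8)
The plan is to route the categoricity transfer through the construction of a good frame, using $H_2$-tameness to move independence across cardinals and the existence of primes to control the internal structure of models. Throughout, the guiding principle is the classical equivalence (in a stable AEC with amalgamation) between categoricity in $\mu$ and the statement that every member of $\K_\mu$ is saturated, so the real work is to show that large models cannot omit types.

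First I would extract superstability from the categoricity hypothesis. The threshold $H_2$ is a Hanf-number bound chosen precisely so that the model of size $\lambda$ is saturated, and saturation in a high cardinal together with $H_2$-tameness and amalgamation should force $\K$ to be superstable in every $\mu \ge H_2$: stability, no long splitting chains, and in particular the existence and uniqueness of saturated models in each such $\mu$. I expect this to be available from the earlier development deriving superstability from categoricity above the Hanf number. With superstability in hand, I would build a good frame $\s$ on the saturated models of size $\ge H_2$. Superstability supplies the local character and the basic non-splitting-based independence relation; $H_2$-tameness upgrades this to uniqueness and extension across the cardinal gap; and the hypothesis that $\K_{\ge H_2}$ has primes over sets $M \cup \{a\}$ supplies the domination structure that promotes the construction to a genuine good frame, yielding symmetry and the continuity one needs. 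The essential role of primes is that each basic type acquires a prime (hence dominated) extension, so saturated models decompose as directed unions of prime models.

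With $\s$ established, I would run the two transfer directions. Downward, for $H_2 \le \mu < \lambda$, the categoricity model is saturated and contains a saturated model of size $\mu$; an omitting-types argument (a non-saturated model in $\K_\mu$ would let one build a non-saturated model in $\K_\lambda$) shows every member of $\K_\mu$ is saturated, and uniqueness of saturated models gives categoricity in $\mu$. Upward, for $\mu > \lambda$, I would use the frame and primes to show every $M \in \K_\mu$ is saturated: resolve $M$ as an increasing union along a filtration by saturated submodels and build, stage by stage, prime models realizing the relevant basic types, arguing that no type is omitted in the limit. Uniqueness of saturated models then yields categoricity in $\mu$ as well.

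I expect the main obstacle to be the upward direction, namely proving that every model in $\K_\mu$ for $\mu > \lambda$ is saturated. This is exactly where primes do the decisive work, since they permit a controlled construction of $M$ as a directed union in which every basic type over every small submodel is realized by a dominated element. The delicate points will be preserving saturation at limit stages of the resolution and bridging the gap between $H_2$ and an arbitrarily large $\mu$: the former requires the continuity of $\s$, and the latter relies on the tameness of the frame to guarantee that realizing all types of size $H_2$ suffices to realize all types, so that $H_2$-saturation propagates to full saturation.
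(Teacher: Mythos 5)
Your overall scaffolding (superstability and a good frame from categoricity plus tameness, a downward transfer to $H_2$ via saturation of the categoricity model, then an upward transfer) matches the paper's architecture, but the upward direction as you describe it has a genuine gap, and it is precisely where the paper's real work lies. Given an arbitrary $M \in \K_\mu$ with $\mu > \lambda$, you cannot ``build, stage by stage, prime models realizing the relevant basic types'' inside $M$: the model is fixed, and the task is to show it \emph{already} realizes every type over every smaller submodel, including submodels of size in $[\lambda, \mu)$ where no Löwenheim--Skolem reflection into the categoricity cardinal is available. Your construction would at best produce \emph{some} saturated model of size $\mu$, which is free from stability. Likewise, tameness does not make $H_2$-saturation ``propagate to full saturation''; it is a locality property of type equality, not of type realization. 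The paper's actual mechanism is a uni-dimensionality dichotomy (Theorem \ref{unidim-equiv}): either the frame is weakly uni-dimensional, in which case categoricity transfers up, or it is not, in which case orthogonality calculus produces a nonalgebraic type $p$ over a model $M$ such that the class $\Kneg$ of extensions of $M$ omitting $p$ itself carries a good $\lambda$-frame with primes (Theorem \ref{not-unidim-frame-1}), hence has arbitrarily large models, contradicting categoricity in any $\mu > \lambda$. Your proposal never engages with orthogonality or uni-dimensionality, which is the entire content of Section 2.

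A secondary misplacement: you use primes to ``promote the construction to a genuine good frame, yielding symmetry and continuity.'' In the paper the good frame is built from amalgamation, tameness, and categoricity alone (Facts \ref{good-frame-fact} and \ref{good-frame-fact-2}); primes play no role there. They enter only in the orthogonality calculus, where prime triples replace Shelah's uniqueness triples in the definition of (weak) orthogonality (Definition \ref{perp-def}, Lemmas \ref{perp-exist} and \ref{perp-weak}), which is what lets the non-uni-dimensional case be refuted without assuming the frame is successful.
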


The result had previously been established when the stronger locality assumptions of full tameness and shortness are also required.

An application of the method of proof of Theorem \ref{abstract-thm} is that Shelah's categoricity conjecture holds in the context of homogeneous model theory (this was known, but our proof gives new cases):

\begin{athm}\label{abstract-thm-2}
  Let $D$ be a homogeneous diagram in a first-order theory $T$. If $D$ is categorical in a $\lambda > |T|$, then $D$ is categorical in all $\lambda' \ge \min (\lambda, \beth_{(2^{|T|})^+})$.
\end{athm}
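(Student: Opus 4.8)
The plan is to realize the class of $D$-models as an AEC and then run the argument behind Theorem~\ref{abstract-thm}, while exploiting the fact that in homogeneous model theory the locality hypotheses are available already at $\aleph_0$; this is precisely what sharpens the Hanf number from $H_2$ down to $\beth_{(2^{|T|})^+}$.

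First I would set up the class. Let $\K$ be the class of $D$-models ordered by elementary substructure, so that $\LS(\K) = |T|$. The existence of the large strongly $(D,\bar\kappa)$-homogeneous model $\sea$ (the ``monster'') immediately supplies $\K$ with amalgamation, joint embedding, and arbitrarily large models. The decisive point is that Galois types in $\K$ coincide with first-order syntactic types computed inside $\sea$; since a syntactic type is determined by its restrictions to finite subsets of its domain, $\K$ is fully $(<\aleph_0)$-tame and short. Thus tameness and shortness hold at $\aleph_0$, with no need to climb to a large Hanf number to locate them.

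Next, categoricity is converted into the remaining structural hypotheses. Categoricity in $\lambda > |T|$ forces the model of size $\lambda$ to be $(D,\lambda)$-homogeneous, and a standard Ehrenfeucht--Mostowski and counting-of-types argument then shows that $D$ is stable; this is the one place where a Hanf number of shape $\beth_{(2^{|T|})^+}$ is genuinely needed (an order property would otherwise produce too many nonisomorphic models of size $\lambda$). With stability secured, prime (primary) models over sets of the form $M \cup \{a\}$ are built by the classical atomic construction inside $\sea$, verifying the primes hypothesis of Theorem~\ref{abstract-thm} at no additional cost in the threshold.

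Finally I would run the good-frame construction as in Theorem~\ref{abstract-thm}: categoricity yields a good frame on the saturated models, tameness transfers nonforking upward, and primes supply the extension and uniqueness properties driving the categoricity transfer. Because tameness holds at $\aleph_0$ rather than at an $H_1$-sized cardinal, the two successive $\beth$-jumps that produce $H_2$ in the general theorem collapse into one, so the whole construction can be started at $\beth_{(2^{|T|})^+}$; downward transfer then reaches $\beth_{(2^{|T|})^+}$ and upward transfer covers everything above $\lambda$, yielding categoricity in all $\lambda' \ge \min(\lambda, \beth_{(2^{|T|})^+})$. I expect the main obstacle to be the careful bookkeeping of this Hanf number --- checking that every threshold (stability, existence of the good frame, and the local character of nonforking) can indeed be met at $\beth_{(2^{|T|})^+}$ --- which rests entirely on tameness and shortness being available at $\aleph_0$.
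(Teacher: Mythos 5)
Your outline for the case $\lambda > |T|^+$ is essentially the paper's route (Theorem \ref{abstract-thm-2-proof-0}: stability in all cardinals, primes over $(D,\chi)$-homogeneous models via the density of isolated types, then the saturated-models transfer Theorem \ref{main-thm-sat}), though two points are off. First, the Hanf number $\beth_{(2^{|T|})^+}$ is not needed to get stability --- stability below the categoricity cardinal follows from an EM argument for \emph{any} $\lambda > |T|$; it enters through the omitting-types downward transfer (Fact \ref{omit-type}), which shows every model of size at least $\beth_{(2^{|T|})^+}$ is saturated so that the categoricity of the saturated class pulls back to $\K_D$. Second, the classical atomic construction only produces primes over sufficiently homogeneous models, so the transfer must be run inside $\Ksatp{\chi}_D$ rather than in $\K_D$ itself; this is exactly why Theorem \ref{main-thm-sat} (and not Theorem \ref{main-thm-proof}) is the right tool, and you should say so explicitly.

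The genuine gap is the case $\lambda = |T|^+$, where $\min(\lambda, \beth_{(2^{|T|})^+}) = |T|^+$ and the statement demands categoricity in \emph{every} $\lambda' \ge |T|^+$. Here your plan breaks down: Lemma \ref{upward-transfer-lem} and Theorem \ref{main-thm-sat} require a cardinal $\chi$ with $\LS(\K) < \chi < \lambda$, and there is no such room when $\lambda = \LS(\K)^+$; the Grossberg--VanDieren upward transfer also does not apply since it needs $\lambda > \LS(\K)^+$ (one needs a superlimit in $\LS(\K)$). The paper handles this case by building a good $|T|$-frame directly on the class $\Ksatp{|T|}_D$ of $(D,|T|)$-homogeneous models, which requires two nontrivial ingredients absent from your proposal: (i) $\kappa(D) = \aleph_0$ follows from categoricity in any $\lambda > |T|$ (Lemma \ref{categ-superstab}; the subcase $\lambda = \aleph_\omega(|T|)$ is new and needs the Shelah--Villaveces theorem plus a tame stability transfer, not just EM models), and (ii) unions of increasing chains of $(D,|T|)$-homogeneous models are $(D,|T|)$-homogeneous (Theorem \ref{chainsat}), proved via a forking-and-averages calculus, so that $\Ksatp{|T|}_D$ is actually an AEC with L\"owenheim--Skolem number $|T|$ to which Theorem \ref{unidim-equiv} can be applied. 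Without this, your argument proves the theorem only for $\lambda > |T|^+$.
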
 

\end{abstract}

\maketitle

\tableofcontents

\section{Introduction}

Shelah's eventual categoricity conjecture is a major force in the development of classification theory for abstract elementary classes (AECs)\footnote{For a history, see the introduction of \cite{ap-universal-v10}. We assume here that the reader is familiar with the basics of AECs as presented in e.g.\ \cite{baldwinbook09}.}.

\begin{conjecture}[Shelah's eventual categoricity conjecture, N.4.2 in \cite{shelahaecbook}]
  An AEC categorical in a high-enough cardinal is categorical on a tail of cardinals.
\end{conjecture}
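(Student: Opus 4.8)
Shelah's conjecture is open in general, so the concrete target is its instance recorded in Theorem \ref{abstract-thm}, which I now plan to attack. The overall plan is to extract from categoricity a global independence calculus and then run a categoricity transfer whose upward and downward steps are controlled by $H_2$-tameness and whose dimension theory is supplied by the primes hypothesis. First I would reduce categoricity to saturation: amalgamation together with categoricity in some $\lambda > H_2$ yields stability on a tail of cardinals, so saturated models of each size $\mu \ge H_2$ are unique up to isomorphism; hence to prove categoricity in a given $\lambda' \ge H_2$ it suffices to show that every model of size $\lambda'$ is saturated.

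The first substantive step is to extract superstability. Using $H_2$-tameness, amalgamation, and categoricity above the Hanf bound $H_2$, I would show that $\K$ has no long splitting chains and has unique limit models in every $\mu \ge H_2$; in particular increasing unions of $\mu$-saturated models are $\mu$-saturated. Superstability together with $H_2$-tameness then lets me build a good $\ge H_2$-frame $\s$, a global nonforking relation on the $H_2$-saturated models satisfying existence, extension, uniqueness, symmetry, local character, and continuity, via the standard construction from $\mu$-splitting. Here the primes hypothesis enters: prime models over sets $M \cup \{a\}$ endow $\s$ with a notion of prime (dominating) extension, so that one can amalgamate independently and form prime hulls inside the class. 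This is precisely what replaces the full tameness and shortness used in the earlier proof.

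The heart of the argument---and the step I expect to be the main obstacle---is the categoricity transfer. Using $\s$ and the primes I would establish a weak unidimensionality: categoricity in a single $\lambda > H_2$ forces all basic types to be pairwise non-orthogonal, so that the prime hull over a long Morley sequence of a fixed basic type is saturated and its isomorphism type is pinned down by its cardinality alone. Concretely, were some model of size $\mu \in [H_2, \lambda]$ not saturated, one could use extension and primes to manufacture two non-isomorphic models of size $\lambda$, contradicting categoricity; this shows the categoricity model is saturated and, by the same analysis, that every model of size in $[H_2, \lambda]$ is saturated. The upward direction then follows because unions of saturated models are saturated by superstability, so every model of every $\lambda' \ge H_2$ is saturated, yielding categoricity there. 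The delicate point is running the orthogonality and dimension calculus with only $H_2$-tameness and primes in place of shortness: one must leverage primes to obtain domination and the uniqueness of prime hulls over independent sequences, and verifying that these suffice to drive the transfer is where the technical weight of the proof lies.
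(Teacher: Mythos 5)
The displayed statement is Shelah's conjecture itself, which the paper does not claim to prove, and you rightly redirect to the instance in Theorem \ref{abstract-thm} (= Theorem \ref{main-thm-proof}). Your architecture matches the paper's: extract superstability and a good frame from amalgamation, tameness and categoricity; replace shortness by primes in the dimension theory; and reduce the transfer to an equivalence between (weak) uni-dimensionality and categoricity. However, the step you explicitly defer as ``the main obstacle'' is precisely where all of the paper's new content lives, so as it stands the proposal is a plan rather than a proof. The paper's resolution is to redefine weak orthogonality using \emph{prime triples} in place of Shelah's uniqueness triples (Definition \ref{perp-def}), prove the ``for all''/``there exists'' equivalence by a new argument (Lemma \ref{perp-exist}), upgrade weak orthogonality to orthogonality via the conjugation property (Lemma \ref{perp-weak}), and show that failure of weak uni-dimensionality yields a sub-AEC $\Kneg$ carrying a good $\lambda$-frame with primes (Theorem \ref{not-unidim-frame-1}), whose arbitrarily large models contradict categoricity in a second cardinal. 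Your alternative picture --- saturation of a prime hull over a long Morley sequence of a fixed basic type --- is a Morley-style mechanism that is not known to run with only tameness and primes; the paper deliberately avoids it.

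There is also a genuine ordering problem in your downward step. You propose to show that every model of size $\mu \in [H_2,\lambda]$ is saturated ``using extension and primes,'' but the primes hypothesis concerns $\K_{\ge H_2}$, whereas the good frame produced by the construction lives on the class of \emph{saturated} models; these classes are only known to coincide \emph{after} categoricity in $H_2$ is established. The paper therefore first proves categoricity in $H_2$ by the prime-free omitting-types downward transfer of \cite{sh394} (Fact \ref{downward-transfer}, applicable because stability makes the model of size $\lambda$ saturated), and only then brings primes and the orthogonality calculus to bear. Moreover, Theorem \ref{unidim-equiv} needs categoricity in the frame's own cardinal (the conjugation property requires superlimit models), which is a second reason the downward step must precede the dimension-theoretic one. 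Without this reordering, your argument is circular at the point where primes are first invoked.
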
 

In \cite{ap-universal-v10}, we established the conjecture for universal classes with the amalgamation property\footnote{After the initial submission of this paper, we managed to remove the amalgamation hypothesis \cite{categ-universal-2-v3-toappear}.} (a universal class is a class of models closed under isomorphisms, substructures, and unions of $\subseteq$-increasing chains, see \cite{sh300-orig}). The proof starts by noting that universal classes satisfy tameness: a locality property introduced in VanDieren's 2002 Ph.D.\ thesis (the relevant chapter appears in \cite{tamenessone}).

\begin{fact}[\cite{tameness-groups}]\label{univ-tame} Any universal class $\K$ is\footnote{While the main idea of the proof is due to Will Boney, the fact that it applies to universal classes is due to the author. A full proof of Fact \ref{univ-tame} appears as \cite[3.7]{ap-universal-v10}.}  $\LS (\K)$-tame.
\end{fact}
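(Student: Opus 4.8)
The plan is to show that in a universal class Galois types are \emph{syntactic}, and then to read off tameness from the finite character of quantifier-free formulas together with the Löwenheim--Skolem axiom. Throughout I use that, presented as an AEC, a universal class has strong substructure equal to substructure $\subseteq$ and has the substructure embeddings as its morphisms. For a set $X$ and $N \in \K$ I write $\langle X \rangle^{N}$ for the substructure of $N$ generated by $X$.

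The key step, carrying the main idea, is the characterization: for $M \subseteq N_\ell$ and tuples $\bar{a}_\ell \in N_\ell$ ($\ell = 1, 2$), one has $\gtp(\bar{a}_1/M; N_1) = \gtp(\bar{a}_2/M; N_2)$ if and only if the assignment $\bar{a}_1 \mapsto \bar{a}_2$ together with $\id_M$ extends to an isomorphism $\langle M \bar{a}_1 \rangle^{N_1} \cong \langle M \bar{a}_2 \rangle^{N_2}$. I would prove this by first observing that Galois types are invariant under shrinking the ambient model: since $\K$ is closed under substructures, $\langle M\bar{a}_\ell\rangle^{N_\ell}$ again lies in $\K$ and is a $\subseteq$-extension of $M$ inside $N_\ell$, so $\gtp(\bar{a}_\ell/M;N_\ell) = \gtp(\bar{a}_\ell/M; \langle M\bar{a}_\ell\rangle^{N_\ell})$. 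Thus, after replacing each $N_\ell$ by the corresponding generated substructure, an isomorphism over $M$ matching the tuples becomes an isomorphism of the two ambient models themselves, which trivially witnesses equality of the Galois types (one of the two models serves as the common amalgam). I want to stress that this manoeuvre --- shrinking rather than amalgamating --- is exactly what lets the argument dispense with the amalgamation property, which a universal class need not enjoy (the variety of lattices is a universal class without amalgamation).

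Granting the characterization, tameness is essentially bookkeeping. The map $\bar{a}_1 \mapsto \bar{a}_2$ extends to an isomorphism of generated substructures over $M$ precisely when, for every finite tuple $\bar{m}$ from $M$, the tuples $\bar{a}_1\bar{m}$ and $\bar{a}_2\bar{m}$ realize the same quantifier-free type: well-definedness, injectivity, preservation of functions and relations, and surjectivity onto $\langle M\bar{a}_2\rangle^{N_2}$ are each checkable on finitely many parameters at a time, since every element of a generated substructure is a term in $M \cup \bar{a}$ and each atomic formula mentions finitely many elements of $M$. Hence if $\gtp(\bar{a}_1/M; N_1) \neq \gtp(\bar{a}_2/M; N_2)$, then by the characterization a single finite $\bar{m} \subseteq M$ already witnesses a discrepancy in quantifier-free type. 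Applying the Löwenheim--Skolem axiom to $\bar{m}$, I obtain $M_0 \subseteq M$ with $\bar{m} \subseteq M_0$ and $\|M_0\| \leq \LS(\K)$; the same witness shows, via the characterization applied over $M_0$, that $\gtp(\bar{a}_1/M_0; N_1) \neq \gtp(\bar{a}_2/M_0; N_2)$. This is exactly $\LS (\K)$-tameness (the argument in fact yields finite character of the parameters).

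The step I expect to require the most care is the characterization itself, and within it the invariance under passing to generated substructures; everything downstream is routine finite-character bookkeeping and a single invocation of Löwenheim--Skolem. The one delicate point is verifying that $\langle M\bar{a}\rangle^{N}$ is genuinely a strong substructure sitting between $M$ and $N$ in the AEC sense, so that the monotonicity of Galois types applies --- and this is precisely where closure under substructures (and, for manipulating chains of types, closure under unions of $\subseteq$-increasing chains) is used.
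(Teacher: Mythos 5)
The paper states this as a quoted fact and contains no proof of its own (it defers to \cite[3.7]{ap-universal-v10}), but your argument is correct and is essentially the argument given there: Galois types in a universal class are determined by the quantifier-free type of the tuple over the base, via the isomorphism-of-generated-substructures characterization, and tameness (indeed full $(<\aleph_0)$-tameness and shortness) follows from the finite character of quantifier-free formulas plus one application of L\"owenheim--Skolem. The only direction of your characterization you do not spell out --- that equality of Galois types implies agreement of quantifier-free types over every finite tuple from the base --- is the trivial one, since $\K$-embeddings preserve quantifier-free formulas in any AEC whose ordering refines substructure, and it passes through the transitive closure defining Galois type equality in the absence of amalgamation.
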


The proof generalizes to give a stronger locality property introduced in \cite{tamelc-jsl}:

\begin{defin}\label{shortness-def}
  Let $\K$ be an AEC and let $\chi \ge \LS (\K)$ be an infinite cardinal. $\K$ is \emph{fully $\chi$-tame and short} if for any $M \in \K$, any ordinal $\alpha$, and any Galois types $p, q \in \gS^\alpha (M)$ of length $\alpha$, $p = q$ if and only if $p^I \rest M_0 = q^I \rest M_0$ for any $M_0 \in \K_{\le \chi}$ with $M_0 \lea M$ and any $I \subseteq \alpha$ with $|I| \le \chi$.
\end{defin}

\begin{fact}
  Any universal class $\K$ is fully $\LS (\K)$-tame and short.
\end{fact}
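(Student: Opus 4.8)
The plan is to reduce the statement to the characterization of Galois types in universal classes that already underlies Fact \ref{univ-tame}. Since in a universal class $\lea$ is substructure and $\K$ is closed under substructures, for $M \in \K$ and a tuple $\ba = (a_i)_{i < \alpha}$ in some $N \gea M$ the generated substructure $\langle M \cup \ba \rangle$ lies in $\K$, and the proof of \cite[3.7]{ap-universal-v10} shows that
\[
  \gtp(\ba / M; N) = \gtp(\bb / M; N')
\]
holds if and only if there is an isomorphism $f \colon \langle M \cup \ba \rangle \to \langle M \cup \bb \rangle$ fixing $M$ pointwise with $f(\ba) = \bb$. The forward implication in the definition of full $\LS(\K)$-tameness and shortness is trivial (restrictions of equal types are equal), so I fix $p, q \in \gS^\alpha (M)$, say $p = \gtp(\ba/M)$ and $q = \gtp(\bb/M)$, and prove the contrapositive: assuming $p^I \rest M_0 = q^I \rest M_0$ for every finite $I \subseteq \alpha$ and every finitely generated $M_0 \lea M$, I build a single isomorphism witnessing $p = q$.

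For each such pair $(I, M_0)$, the hypothesis together with the characterization yields an isomorphism $\langle M_0 \cup (a_i)_{i \in I} \rangle \to \langle M_0 \cup (b_i)_{i \in I} \rangle$ fixing $M_0$ and sending $(a_i)_{i \in I}$ to $(b_i)_{i \in I}$. The key observation — and the reason no back-and-forth is needed — is that any such isomorphism is forced to be the \emph{term map}: because an isomorphism commutes with the interpretation of the function symbols, it must send $\tau(\bar m, (a_i)_{i\in I})$ to $\tau(\bar m, (b_i)_{i \in I})$ for every term $\tau$ and every finite $\bar m$ from $M_0$. I therefore define $g \colon \langle M \cup \ba \rangle \to \langle M \cup \bb \rangle$ by $g\big(\tau(\bar m, (a_i)_{i \in I})\big) = \tau(\bar m, (b_i)_{i \in I})$, ranging over finite $\bar m$ from $M$ and finite $I \subseteq \alpha$; every element of $\langle M \cup \ba \rangle$ has this form.

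Well-definedness, injectivity, and preservation and reflection of the atomic relations all follow by the same device: any instance involves only finitely many parameters from $M$ and finitely many coordinates, which can be gathered into one finitely generated $M_0 \lea M$ and one finite $I$, whereupon the single local isomorphism for $(I, M_0)$ settles the instance. Surjectivity holds by symmetry, so $g$ is an isomorphism over $M$ with $g(\ba) = \bb$, and hence $p = q$. Since finitely generated substructures of $M$ and finite subsets of $\alpha$ have size at most $\LS(\K)$, only $M_0 \in \K_{\le \LS(\K)}$ and $I$ with $|I| \le \LS(\K)$ are used, as required. The one genuine subtlety is the coherence of the locally given isomorphisms, and it is precisely the closure of universal classes under substructures that removes it, by pinning down every witnessing isomorphism as the canonical term map.
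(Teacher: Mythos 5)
Your argument is correct and is exactly the intended route: the paper states this Fact without a separate proof, remarking only that the proof of Fact \ref{univ-tame} (the characterization of Galois types in universal classes via isomorphisms of generated substructures) generalizes, and the observation you isolate --- that any witnessing isomorphism over $M_0$ sending $(a_i)_{i\in I}$ to $(b_i)_{i\in I}$ is forced to be the canonical term map, so the local isomorphisms cohere automatically --- is precisely what makes that generalization go through. (A cosmetic point only: what you prove is the nontrivial converse implication directly, not its contrapositive.)
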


Another important property of universal classes used in the proof of Shelah's eventual categoricity conjecture \cite[5.20]{ap-universal-v10} is that they have primes. The definition is due to Shelah and appears in \cite[III.3]{shelahaecbook}. For the convenience of the reader, we include it here:

\begin{defin}\label{prime-def}
  Let $\K$ be an AEC.

  \begin{enumerate}
  \item\label{represents} We say a triple $(a, M, N)$ \emph{represents} a Galois type $p$ if $p = \gtp (a / M; N)$. In particular, $M \lea N$ and $a \in |N|$.
  \item A \emph{prime triple} is a triple $(a, M, N)$ representing a nonalgebraic Galois type $p$ such that for every $N' \in \K$, $a' \in |N'|$, if $p = \gtp (a' / M; N')$ then there exists $f: N \xrightarrow[M]{} N'$ so that $f (a) = a'$.
    \item We say that $\K$ \emph{has primes} if for every $M \in \K$ and every nonalgebraic $p \in \gS (M)$, there exists a prime triple representing $p$.
    \item We define localizations such as ``$\K_\lambda$ has primes'' in the natural way.
  \end{enumerate}
\end{defin}

By taking the closure of $|M| \cup \{a\}$ under the functions of $N$, we get:

\begin{fact}[5.3 in \cite{ap-universal-v10}]
  Any universal class has primes.
\end{fact}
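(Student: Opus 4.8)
The plan is to produce, for a given nonalgebraic $p \in \gS (M)$, a prime triple representing it by choosing the smallest possible top model: the closure of $|M| \cup \{a\}$ under the functions of a realization. First I would fix a realization, so let $N^\ast \in \K$ with $M \lea N^\ast$ and $a \in |N^\ast| \setminus |M|$ satisfy $p = \gtp (a / M; N^\ast)$. Let $N$ be the substructure of $N^\ast$ whose universe is the closure of $|M| \cup \{a\}$ under the (finitary) functions of $N^\ast$. Because a universal class is closed under substructures and its ordering is substructure, we get $N \in \K$ and $M \lea N \lea N^\ast$; by monotonicity of Galois types, $\gtp (a / M; N) = \gtp (a / M; N^\ast) = p$, so $(a, M, N)$ represents $p$.

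The heart of the argument is to show $(a, M, N)$ is prime, and here the decisive feature is that $N$ is generated over $M$ by the single element $a$. Suppose $(a', M, N')$ also represents $p$, i.e.\ $\gtp (a' / M; N') = p = \gtp (a / M; N)$. Unwinding equality of Galois types, there are $N'' \in \K$ and $\lea$-embeddings $g : N \to N''$ and $h : N' \to N''$ with $g \rest M = h \rest M$ and $g (a) = h (a')$. Since every element of $N$ is built from $|M| \cup \{a\}$ by applying functions and $g$ is a homomorphism, the image $g[N]$ is the substructure generated by $g[M] \cup \{g (a)\} = h[M] \cup \{h (a')\}$, which is contained in $h[N']$; hence $g[N] \subseteq h[N']$. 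As $h$ is injective, $f := h^{-1} \circ g$ is a well-defined embedding $N \to N'$; it fixes $M$, since $f \rest M = h^{-1} \circ (h \rest M) = \id_M$, and $f (a) = h^{-1} (h (a')) = a'$. Finally $f[N] = g[N]$ is a substructure of $N'$, so $f[N] \lea N'$ and $f : N \xrightarrow[M]{} N'$ is the required $\lea$-embedding.

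The one point requiring care — and the main obstacle if one does not assume amalgamation — is that equality of Galois types is officially the transitive closure of the single-amalgam relation used above, so a priori $(a, M, N)$ and $(a', M, N')$ are linked only through a finite chain of intermediate triples. I expect to resolve this exactly as in the single step: because $N$ is generated over $M$ by $a$, its image can be chased along the chain, each amalgam placing the current image inside the next model since that model contains the image of the generating element $a$; composing the resulting embeddings yields the desired $f : N \xrightarrow[M]{} N'$ with $f (a) = a'$. Once this is in place, $(a, M, N)$ is a prime triple representing $p$, and since $M$ and $p$ were arbitrary, $\K$ has primes.
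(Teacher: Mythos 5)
Your proposal is correct and is exactly the argument the paper gestures at (``By taking the closure of $|M| \cup \{a\}$ under the functions of $N$'') and that \cite[5.3]{ap-universal-v10} carries out: take the substructure generated by $|M| \cup \{a\}$ and use that images of generated substructures are the substructures generated by the images to chase $N$ through the amalgams witnessing equality of Galois types. You also correctly flag and handle the only delicate point, namely that without amalgamation one must iterate this along the finite chain defining Galois-type equality.
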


The proof of the eventual categoricity conjecture for universal classes with amalgamation in \cite{ap-universal-v10} generalizes to give: 

\begin{fact}[5.18 in \cite{ap-universal-v10}]\label{shelah-conjecture-fact}
  Fully tame and short AECs that have amalgamation and primes satisfy Shelah's eventual categoricity conjecture.
\end{fact}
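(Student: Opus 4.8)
The plan is to prove the two-way categoricity transfer by showing that, on a tail of cardinals, every model is saturated; since in a stable AEC with amalgamation the saturated model of a fixed cardinality is unique (a back-and-forth argument using only stability and amalgamation), this immediately yields categoricity. So the real content is structural: I would manufacture a well-behaved independence notion, show it is ``categorical'' in the sense of possessing a superlimit at each cardinal, and use this to propagate saturation in both directions from the given categoricity cardinal $\lambda$.

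First I would extract superstability from the categoricity hypothesis. Categoricity in $\lambda$ above the relevant Hanf number forces the model of size $\lambda$ to be saturated and, via the standard no-long-splitting-chains analysis, yields superstability (uniqueness of limit models, local character of splitting) on a tail of cardinals. This is where full tameness and shortness do their work: they let me upgrade the purely local, splitting-based calculus into a genuine \emph{good frame} --- a monotonic, transitive forking relation with extension and uniqueness --- following the frame-construction technique of Boney and Vasey. Amalgamation is used throughout to manipulate Galois types and to close up the back-and-forth arguments.

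Next I would exploit primes. The point of Definition~\ref{prime-def} is that over any $M$ and any nonalgebraic $p \in \gS(M)$ there is a prime triple $(a, M, N)$, with $N$ minimal among realizations of $p$; such a triple is in particular a uniqueness triple, so primes give \emph{existence of uniqueness triples} for the frame. This is exactly the hypothesis that makes the good $\lambda$-frame weakly successful, which in turn lets me build a good frame on the $\lambda^+$-saturated models. Iterating the construction propagates a good, categorical frame to every large enough cardinal; concretely, primes yield that increasing chains of saturated models have saturated unions and that limit models are saturated and unique, so $\K$ has a superlimit at each cardinal in the tail.

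The main obstacle, and the step I would handle most carefully, is the transfer itself once the frame is in place. Downward transfer is comparatively soft: the superlimit at each $\mu < \lambda$ is obtained by restriction from the saturated model of size $\lambda$. The upward transfer is where the difficulty concentrates, because the Grossberg--VanDieren theorem transfers categoricity upward only from a \emph{successor} cardinal, whereas $\lambda$ may be a limit of low cofinality; bridging this gap requires the superlimit and prime machinery both to certify that the model of size $\lambda$ is saturated and to drive the induction through limit stages. Equally delicate is the uniform bookkeeping that keeps superstability and the goodness of the frame simultaneously valid across the whole tail with no cardinal gaps --- precisely the role played by the layered Hanf-number threshold above which the argument is guaranteed to run.
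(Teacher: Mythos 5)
There are two genuine gaps. First, your central claim that a prime triple is automatically a uniqueness triple, so that the existence of primes yields weak successfulness of the good frame, is false: the paper explicitly records (via the Hart--Shelah example) that a categorical good $\lambda$-frame with primes need not be weakly successful, so primes do not give you uniqueness triples. In the actual proof of this fact, the successful $\goodp$ frame is extracted from \emph{full tameness and shortness} (this is exactly what that hypothesis is for), not from primes; primes are consumed elsewhere. This misallocation matters, since the entire point of the surrounding paper is that the successfulness route is unavailable from tameness alone and must be replaced by an orthogonality calculus built directly on prime triples.

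Second, and more seriously, your plan never supplies the engine of the transfer. Possessing a good frame, a superlimit, and saturated unions of chains at every cardinal in a tail does not imply categoricity: a superstable, non-uni-dimensional elementary class has all of that and is not categorical on any tail. The step you defer (``drive the induction through limit stages'', ``certify the model of size $\lambda$ is saturated'') is precisely where the proof must do real work, and the mechanism is orthogonality calculus together with the equivalence of categoricity and weak uni-dimensionality. Concretely: if the class is \emph{not} weakly uni-dimensional, then the models omitting a suitable nonalgebraic $p \in \gS(M)$ form a sub-AEC $\Kneg$ carrying a good $\lambda$-frame, hence with arbitrarily large models, none of which can be saturated over $M$; this contradicts categoricity in any $\mu > \lambda$. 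Weak uni-dimensionality then drives the upward transfer. It is in establishing the good frame on $\Kneg$ and the attendant orthogonality calculus that primes (in Shelah's original setting, uniqueness triples) are actually used. Without this dichotomy, your ``propagate saturation upward'' step has no way to get started, since upward transfer from a possibly limit categoricity cardinal is exactly what the Grossberg--VanDieren successor-cardinal theorem does not provide.
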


Many results only use the assumption of tameness (for example \cite{tamenessone, tamenesstwo, tamenessthree, b-k-vd-spectrum, lieberman2011, ss-tame-jsl, bv-sat-v3}), while others use full tameness and shortness \cite{bg-v11-toappear, indep-aec-apal} (but it is also unclear whether it is really needed there, see \cite[Question 15.4]{indep-aec-apal}). 

It is natural to ask whether shortness can be removed from Fact \ref{shelah-conjecture-fact}. We answer in the affirmative: Tame AECs with primes and amalgamation satisfy Shelah's eventual categoricity conjecture. To state this more precisely, we adopt notation from \cite[Chapter 14]{baldwinbook09}.

\begin{notation}\label{h-notation}
  For $\lambda$ an infinite cardinal, let $\hanf{\lambda} := \beth_{(2^{\lambda})^+}$. For $\K$ a fixed AEC, write $H_1 := \hanf{\LS (\K)}$ and $H_2 := \hanf{H_1} = \hanf{\hanf{\LS (\K)}}$.
\end{notation}

\textbf{Main Theorem \ref{main-thm-proof}.}
Let $\K$ be an AEC with amalgamation. Assume that $\K$ is $H_2$-tame and $\K_{\ge H_2}$ has primes. If $\K$ is categorical in some $\lambda > H_2$, then $\K$ is categorical in all $\lambda' \ge H_2$.

This improves \cite[5.18]{ap-universal-v10} which assumed full $\LS (\K)$-tameness and shortness (so the improvement is on two counts: ``full tameness and shortness'' is replaced by ``tameness'' and ``$\LS (\K)$'' is replaced by ``$H_2$''). Compared to Grossberg and VanDieren's upward transfer \cite{tamenessthree}, we do \emph{not} require categoricity in a successor cardinal, but we \emph{do} require the categoricity cardinal to be at least $H_2$ and more importantly ask for the AEC to have primes.

Let us give a rough picture of the proof of both Theorem \ref{main-thm-proof} and the earlier \cite[5.18]{ap-universal-v10}. We will then explain where exactly the two proofs differ. The first step of the proof is to find a sub-AEC $\K'$ of $\K$ (typically a class of saturated models or just a tail: in the case of Theorem \ref{main-thm-proof} we will have $\K' = \K_{\ge H_2}$) which is ``well-behaved'' in the sense of admitting a good-enough notion of independence. Typically, the first step does not use primes. The second step is to show that in $\K'$, categoricity in \emph{some} $\lambda > \LS (\K')$ implies categoricity in \emph{all} $\lambda' > \LS (\K')$. This uses orthogonality calculus and the existence of prime models. The third step pulls back this categoricity transfer to $\K$.

Shelah has developed orthogonality calculus in the context of what he calls successful $\goodp$ $\lambda$-frames \cite[III.6]{shelahaecbook}. It is known \cite{indep-aec-apal} that one can build such a frame using categoricity, amalgamation, and full tameness and shortness so this is how $\K'$ from the previous paragraph was chosen in \cite{ap-universal-v10}. The orthogonality calculus part was just quoted from Shelah (although we did provide some proofs for the convenience of the reader). It is not known how to build a successful $\goodp$ $\lambda$-frame using just categoricity, amalgamation, and \emph{tameness}.

In this paper, we develop orthogonality calculus in the setup of good $\lambda$-frames with primes (i.e.\ we get rid of the successful $\goodp$ hypothesis). Note that it is easier to build good frames than to build successful ones (see \cite{ss-tame-jsl} and \cite[6.14]{vv-symmetry-transfer-v3}). In particular, this can be done with just amalgamation, categoricity, and tameness (the threshold cardinals are also lower than in the construction of a successful good frame).

To develop orthogonality calculus in good frames with primes, we change Shelah's definition of orthogonality: Shelah's definition uses the so-called uniqueness triples, which may not exist here. This paper's definition uses prime triples instead and shows that the proofs needed for the categoricity transfer still go through. This is the main difference between this paper and \cite{ap-universal-v10}. In some places, new arguments are provided. For example, Lemma \ref{perp-exist}, saying that a definition of orthogonality in terms of ``for all'' is equivalent to one in terms of ``there exists'', has a different proof than Shelah's.

Let us justify the assumptions of Theorem \ref{main-thm-proof}. First of all, why do we ask for $\lambda > H_2$ and not e.g.\ $\lambda > H_1$ or even $\lambda > \LS (\K)$? The reason is that the argument uses categoricity in \emph{two} cardinals, so we appeal to a downward categoricity transfer implicit in \cite[II.1.6]{sh394} which proves (without using primes) that classes as in the hypothesis of Theorem \ref{main-thm-proof} must be categorical in $H_2$. If we know that the class is categorical in two cardinals already, then we can work above $\LS (\K)$ (provided of course we adjust the levels at which tameness and primes occur). This is Theorem \ref{upward-transfer}. Moreover if we know that for some $\chi < \lambda$, the class of $\chi$-saturated models of $\K$ has primes, then we can also lower the Hanf number from $H_2$ to $H_1$ (see Theorem \ref{main-thm-sat}).

Let us now discuss the structural assumptions on $\K$. Many classes occurring in practice have amalgamation. Grossberg conjectured \cite[2.3]{grossberg2002} that eventual amalgamation should follow from categoricity and, assuming that the class is \emph{eventually syntactically characterizable} (see \cite[Section 4]{ap-universal-v10}), it does assuming the other assumptions: tameness and having primes. We now focus on these two assumptions.

A wide variety of AECs are tame (see e.g.\ the introduction to \cite{tamenessone} or the upcoming survey \cite{bv-survey-v4-toappear}), and many classes studied by algebraists have primes (one example are AECs which admit intersections, i.e.\ whenever $N \in \K$ and $A \subseteq |N|$, we have that $\bigcap \{M \lea N \mid A \subseteq |M|\} \lea N$. See \cite{non-locality} or \cite[Section 2]{ap-universal-v10}). Tameness is conjectured (see \cite[Conjecture 1.5]{tamenessthree}) to follow from categoricity and of course, the existence of prime models plays a key role in many categoricity transfer results including Morley's categoricity theorem and Shelah's generalization to excellent classes \cite{sh87a, sh87b}. Currently, no general way\footnote{We discuss homogeneous model theory and more generally finitary AECs later.}  of building prime models in AECs is known except by going through the machinery of excellence \cite[Chapter III]{shelahaecbook}. It is unknown whether excellence follows from categoricity.

In the special case of homogeneous model theory, it is easier to build prime models\footnote{We thank Rami Grossberg for asking us if the methods of \cite{ap-universal-v10} could be adapted to this context.}. Let $\K$ be a class of models of a homogeneous diagram categorical in a $\lambda > H_2$. Clearly, $\K$ has amalgamation and is fully $\LS (\K)$-tame and short. By stability and \cite[Section 5]{sh3}, the class of $H_2$-saturated models of $\K$ has primes. The proof of Theorem \ref{main-thm-proof} first argues without using primes that $\K$ is categorical in $H_2$. Hence the class of $H_2$-saturated models of $\K$ is just the class $\K_{\ge H_2}$, so it has primes. We apply Theorem \ref{main-thm-proof} to obtain the eventual categoricity conjecture for homogeneous model theory. Actually Theorem \ref{main-thm-proof} is not needed for that result: \cite[5.18]{ap-universal-v10} suffices. However we can also improve on the Hanf number $H_2$ and obtain Theorem \ref{abstract-thm-2} from the abstract:

\textbf{Theorem \ref{abstract-thm-2-proof}.}
  Let $D$ be a homogeneous diagram in a first-order theory $T$. If $D$ is categorical in some $\lambda > |T|$, then $D$ is categorical in all $\lambda' \ge \min (\lambda, \hanf{|T|})$.

  When $T$ is countable, a stronger result has been established by Lessmann \cite{lessmann2000}: categoricity in some uncountable cardinal implies categoricity in \emph{all} uncountable cardinals. When $T$ is uncountable, the eventual categoricity conjecture for homogeneous model theory is implicit in \cite[Section 7]{sh3} and was also given a proof by Hyttinen \cite{hyt-categ-homog}. More precisely, Hyttinen prove that categoricity in some $\lambda > |T|$ with $\lambda \neq \aleph_{\omega} (|T|)$ implies categoricity in all $\lambda' \ge \min (\lambda, \hanf{|T|})$.  Our proof of Theorem \ref{abstract-thm-2-proof} is new and also covers the case $\lambda = \aleph_{\omega} (|T|)$. We do not know whether a similar result also holds in the framework of finitary AECs (there the categoricity conjecture has been solved for tame and \emph{simple}\footnote{In this context, stable does not imply simple.} finitary AECs with countable Löwenheim-Skolem number \cite{finitary-aec}\footnote{The argument is similar to the proof of Morley's categoricity theorem.}).

A continuation of the present paper is in \cite{downward-categ-tame-apal} (circulated after the initial submission of this paper), where orthogonality calculus is developed inside good frames that do not necessarily have primes. We establish there that the analog of Theorem \ref{abstract-thm-2-proof} (i.e.\ the threshold is $H_1$) holds in any $\LS (\K)$-tame AEC with amalgamation and primes.

This paper was written while the author was working on a Ph.D.\ thesis under the direction of Rami Grossberg at Carnegie Mellon University and he would like to thank Professor Grossberg for his guidance and assistance in his research in general and in this work specifically. The author also thanks Tapani Hyttinen for his comments on the categoricity conjecture for homogeneous model theory, as well as the referee for several thorough reports that greatly helped improve the presentation and focus of this paper.

\section{Orthogonality with primes}

In \cite[III.6]{shelahaecbook}, Shelah develops a theory of orthogonality for good frames. In addition to the existence of primes, his assumptions include that the good frame is successful $\goodp$ (see \cite[III.1]{shelahaecbook}) so in particular it expands to an independence relation NF for models in $\K_\lambda$. While successfulness follows from full tameness and shortness \cite[11.13]{indep-aec-apal}, it is not clear if it follows from tameness only, so we do not adopt this assumption. Instead we will assume only that the good frame has primes.\footnote{Recently, Will Boney and the author have shown \cite{counterexample-frame-v2} that the $\aleph_{n - 3}$-good frame in the Hart-Shelah example is \emph{not} (weakly) successful. However it is categorical and has primes (because the Hart-Shelah example admits intersections). Thus the setup of this paper is strictly weaker than Shelah's.} 

The proof of Fact \ref{shelah-conjecture-fact} uses Shelah's theory of orthogonality to prove a technical statement on good frames being preserved when doing a certain change of AEC \cite[III.12.39]{shelahaecbook}. We show that this statements still holds if we do \emph{not} assume successfulness but only the existence of primes (see Theorem \ref{not-unidim-frame-1}). Along the way, we develop orthogonality calculus in good frames with primes. To do so, we change Shelah's definition of orthogonality from \cite[III.6.2]{shelahaecbook} to use prime triples instead of uniqueness triples and check that \cite[III.12.39]{shelahaecbook} can still be proven using this new definition of orthogonality.

We assume that the reader is familiar with Section 5 and Appendix B of \cite{ap-universal-v10}. We also assume that the reader is familiar with the basics of good frames as presented in \cite[II.2]{shelahaecbook}. As in \cite[II.6.35]{shelahaecbook}, we say that a good $\lambda$-frame $\s$ is \emph{type-full} if the basic types consist of \emph{all} the nonalgebraic types over $M$. For simplicity, we focus on type-full good frames here. We say that a good $\lambda$-frame $\s$ is \emph{on $\K_\lambda$} if its underlying class is $\K_\lambda$. We say that $\s$ is \emph{categorical} if $\K$ is categorical in $\lambda$ and we say that it \emph{has primes} if $\K_\lambda$ has primes (where we localize Definition \ref{prime-def} in the natural way).

All throughout, we assume:

\begin{hypothesis}\label{good-frame-hyp}
  $\s = (\K_\lambda, \nf, \Sbs)$ is a categorical type-full good $\lambda$-frame which has primes. We work inside $\s$.
\end{hypothesis}

Hypothesis \ref{good-frame-hyp} is reasonable: By Fact \ref{good-frame-fact}, categorical good frames exist assuming categoricity, amalgamation, and tameness. As for assuming the existence of primes, this is an hypothesis of our main theorem (Theorem \ref{main-thm-proof}) and we have tried to justify it in the introduction. Se also Fact \ref{homog-primes}, which shows how to obtain the existence of primes in the setup of homogeneous model theory.

The definition of orthogonality is similar to \cite[III.6.2]{shelahaecbook}: the only difference is that uniqueness triples are replaced by prime triples. In Shelah's context, this gives an equivalent definition (see \cite[III.6.3]{shelahaecbook}).

\begin{defin}\label{perp-def}
  Let $M \in \K_\lambda$ and let $p, q \in \gS (M)$ be nonalgebraic. We say that $p$ is \emph{weakly orthogonal to $q$} and write $p \wkperp q$ if for all prime triples $(b, M, N)$ representing $q$ (i.e.\ $q = \gtp (b / M; N)$, see Definition \ref{prime-def}(\ref{represents})), we have that $p$ has a unique extension to $\gS (N)$.

  We say that \emph{$p$ is orthogonal to $q$} (written $p \perp q$) if for every $N \in \K_\lambda$ with $N \gea M$, $p' \wkperp q'$, where $p'$, $q'$ are the nonforking extensions to $N$ of $p$ and $q$ respectively.

  For $p_\ell \in \gS (M_\ell)$ nonalgebraic, $\ell = 1,2$, $p_1 \perp p_2$ if and only if there exists $N \gea M_\ell$, $\ell = 1,2$ such that the nonforking extensions to $N$ $p_1'$ and $p_2'$ of $p_1$ and $p_2$ respectively are orthogonal. 
\end{defin}
\begin{remark}
  Formally, the definition of orthogonality depends on the frame but $\s$ will always be fixed.
\end{remark}

The next basic lemma says that we can replace the ``for all'' in Definition \ref{perp-def} by ``there exists''. This corresponds to \cite[III.6.3]{shelahaecbook}, but the proof is different.

\begin{lem}\label{perp-exist}
  Let $M \in \K_\lambda$ and $p, q \in \gS (M)$ be nonalgebraic. Then $p \wkperp q$ if and only if there exists a prime triple $(b, M, N)$ representing $q$ such that $p$ has a unique extension to $\gS (N)$.
\end{lem}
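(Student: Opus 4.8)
The plan is to prove both directions by exploiting the universal mapping property of prime triples together with the observation that the unique extension property transfers \emph{downward} along $\K$-embeddings. The forward direction is immediate: since $\s$ has primes (Hypothesis \ref{good-frame-hyp}) and $q$ is nonalgebraic, there is at least one prime triple $(b, M, N)$ representing $q$, and by the definition of $\wkperp$ (Definition \ref{perp-def}) that triple already witnesses the right-hand side.

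For the backward direction, fix a prime triple $(b, M, N)$ representing $q$ such that $p$ has a unique extension to $\gS (N)$, and let $(b', M, N')$ be an \emph{arbitrary} prime triple representing $q$; I must show that $p$ has a unique extension to $\gS (N')$. The key move is to map $N'$ \emph{into} the ``good'' model $N$ rather than the reverse: since $(b', M, N')$ is prime and the triple $(b, M, N)$ also represents $q$ (with $b$ realizing $q$ over $M$ inside $N$), there is a $\K$-embedding $g : N' \xrightarrow[M]{} N$ with $g (b') = b$. Setting $N'' := g[N']$, we then have $M \lea N'' \lea N$, and $g$ is an isomorphism from $N'$ onto $N''$ fixing $M$ pointwise.

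It now suffices to establish two sublemmas. First, the unique extension property passes from $N$ down to the submodel $N''$: given $s_1, s_2 \in \gS (N'')$ that both extend $p$, take nonforking extensions $\hat s_1, \hat s_2 \in \gS (N)$ (these exist since $\s$ is a type-full good frame and each $s_\ell$, extending the nonalgebraic $p$, is nonalgebraic, hence basic); since $\hat s_\ell \rest M = p$ for $\ell = 1, 2$, uniqueness over $N$ gives $\hat s_1 = \hat s_2$, and restricting to $N''$ yields $s_1 = s_2$. Second, because $g$ fixes $M$ pointwise, transporting types along the isomorphism $g : N' \cong N''$ is a bijection between the extensions of $p$ to $\gS (N')$ and the extensions of $p$ to $\gS (N'')$; hence uniqueness over $N''$ forces uniqueness over $N'$, as desired.

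I expect the only subtle point to be choosing the correct direction for the comparison embedding. Using $g : N' \to N$, so that $N'$ is realized inside the model over which uniqueness is already known, is exactly what makes the downward-transfer sublemma applicable; the embedding running the other way (which primeness of $(b, M, N)$ also supplies) would instead require transferring uniqueness \emph{upward} from a submodel, which fails in general. Everything else reduces to the routine invariance of Galois types under isomorphisms fixing $M$ and the extension property built into the good frame $\s$.
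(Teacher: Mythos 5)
Your argument follows essentially the same route as the paper's: use primeness of the \emph{arbitrary} triple $(b', M, N')$ to produce $g : N' \xrightarrow[M]{} N$ into the distinguished model, transfer the unique-extension hypothesis from $N$ down to the image $N'' = g[N']$, and pull back across the isomorphism $g$ by invariance. The paper packages the downward step through nonforking (any extension of $p$ to $\gS(f[N_2])$, once further extended to $\gS(N)$ by amalgamation, must equal the unique --- hence nonforking --- extension of $p$, so by monotonicity and invariance every extension of $p$ to $\gS(N_2)$ is the nonforking one and is therefore unique), whereas you compare two extensions directly and never invoke the forking machinery in that step; both versions work, and your observation about the correct direction of the comparison embedding is exactly the point the paper's proof turns on.

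One parenthetical claim is false as stated: an extension of a nonalgebraic type need not be nonalgebraic. If $c \in |N''| \setminus |M|$ realizes $p$, then $\gtp(c / N''; N''')$ is an algebraic extension of $p$, so you cannot conclude that $s_1, s_2$ are basic and justify the existence of $\hat s_1, \hat s_2$ via the extension axiom for nonforking. This does not sink the argument, because you never use that $\hat s_\ell$ is nonforking: you only need \emph{some} extensions of $s_1, s_2$ to $\gS(N)$, and these exist by amalgamation, which is precisely what the paper uses at the corresponding step. With that one repair the proof is correct.
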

\begin{proof}
  The left to right direction is straightforward. Now assume $(b, M, N)$ is a prime triple representing $q$ such that $p$ has a unique extension to $\gS (N)$. Let $(b_2, M, N_2)$ be another prime triple representing $q$. We want to see that $p$ has a unique extension to $\gS (N_2)$. Let $p_2 \in \gS (N_2)$ be an extension of $p$. By primeness of $(b_2, M, N_2)$, there exists $f: N_2 \xrightarrow[M]{} N$ such that $f (b_2) = b$.

  We have that $f (p_2)$ is an element of $\gS (f[N_2])$ and $f[N_2] \lea N$, so using amalgamation pick $p_2' \in \gS (N)$ extending $f (p_2)$. Now as $f$ fixes $M$, $f (p_2)$ extends $p$, so $p_2'$ extends $p$. Since by assumption $p$ has a unique extension to $\gS (N)$, $p_2'$ must be this unique extension, and in particular $p_2'$ does not fork over $M$. By monotonicity, $f (p_2)$ does not fork over $M$. By invariance, $p_2$ does not fork over $M$. This shows that $p_2$ must be the unique extension of $p$ to $\gS (N_2)$, as desired.
\end{proof}

We now show that weak orthogonality is the same as orthogonality. Recall (Hypothesis \ref{good-frame-hyp}) that we are assuming categoricity in $\lambda$. In particular, \emph{all the models of size $\lambda$ are superlimit}\footnote{\label{superlim-footnote}Recall \cite[N.2.4(4)]{shelahaecbook} that $M \in \K_\lambda$ is superlimit if it is universal in $\K_\lambda$, has a proper extension, and whenever $\delta < \lambda^+$ is limit, $\seq{M_i : i < \delta}$ is increasing with $M \cong M_i$ for all $i < \delta$, then $M \cong \bigcup_{i < \delta} M_i$. Directly from the definition, one checks that for any AEC $\K$ and any $\lambda \ge \LS (\K)$, if $\K$ is categorical in $\lambda$ and has no maximal models in $\lambda$ (so in particular if there is a categorical good $\lambda$-frame on $\K_\lambda$), then the model of cardinality $\lambda$ is superlimit.}. Thus we can use the following property, which Shelah proves for superlimit models $M, N \in \K_\lambda$:

\begin{fact}[The conjugation property, III.1.21 in \cite{shelahaecbook}]\label{conj-prop}
  Let $M \lea N$ be in $\K_\lambda$, $\alpha < \lambda$, and let $(p_i)_{i < \alpha}$ be types in $\gS (N)$ that do not fork over $M$. Then there exists $f: N \cong M$ such that $f (p_i) = p_i \rest M$ for all $i < \alpha$.
\end{fact}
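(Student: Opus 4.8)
Strictly speaking this is Shelah's conjugation property \cite[III.1.21]{shelahaecbook}, stated there for superlimit $M, N \in \K_\lambda$; since categoricity in $\lambda$ forces every model of size $\lambda$ to be superlimit (the footnote on superlimit models), its hypotheses are met and in practice I would simply quote it. Let me nonetheless describe how I would reconstruct the argument. The plan is to observe that, by uniqueness of nonforking, the entire configuration over $N$ is rigidly determined by its restriction to $M$, and then to produce the isomorphism $f$ from the homogeneity of the superlimit model, letting uniqueness of nonforking do the work of matching each $p_i$ with $q_i := p_i \rest M$.

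First I would record the key rigidity. Since each $p_i$ does not fork over $M$, uniqueness of nonforking extensions gives that $p_i$ is the \emph{unique} member of $\gS (N)$ extending $q_i$ without forking over $M$. Hence the whole configuration $(N, (p_i)_{i < \alpha})$ is recoverable from $(M, (q_i)_{i < \alpha})$ together with the nonforking relation between $N$ and the realizing elements over $M$. This is exactly the invariant that any candidate isomorphism $f \colon N \cong M$ with $f (p_i) = q_i$ must preserve, and it is what makes the transport possible: if I can build \emph{any} isomorphism $f \colon N \cong M$ that is ``the identity modulo nonforking over $M$'', then uniqueness of nonforking forces $f (p_i) = q_i$ for every $i$.

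To build such an $f$ I would exploit the homogeneity of the superlimit model. Realize all the types in a common extension: choose $N^+ \gea N$ of size $\lambda$ and elements $(a_i)_{i < \alpha}$ with $p_i = \gtp (a_i / N; N^+)$ and $q_i = \gtp (a_i / M; N^+)$, each $a_i$ independent from $N$ over $M$ (possible as $\alpha < \lambda$). Since categoricity makes $M$ and $N$ isomorphic copies of the superlimit model, I would construct $f \colon N \cong M$ respecting this independence data by a back-and-forth (or limit-tower) argument running through copies of the superlimit model, and then appeal to the rigidity of the previous paragraph to conclude $f (p_i) = q_i$. The main obstacle I anticipate is the simultaneous bookkeeping: one must carry all $\alpha < \lambda$ types and their mutual nonforking relations along the construction at once, and verify that the superlimit model is homogeneous enough for each extension step to succeed while keeping the independence over $M$ intact. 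This homogeneity is the genuinely technical point — it is the heart of \cite[III.1.21]{shelahaecbook} — and it is the reason I would, in the end, invoke that result rather than redo the construction in full.
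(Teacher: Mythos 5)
The paper gives no proof of this statement: it is quoted directly from Shelah as \cite[III.1.21]{shelahaecbook}, the only point to verify being that categoricity in $\lambda$ makes every model of size $\lambda$ superlimit so that Shelah's hypotheses apply --- which is exactly the observation you open with. Your supplementary reconstruction sketch is too vague to stand alone as a proof (the back-and-forth/homogeneity step is precisely the technical content you defer back to Shelah), but your actual approach --- check the superlimit hypothesis and cite \cite[III.1.21]{shelahaecbook} --- coincides with the paper's.
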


\begin{lem}[III.6.8(5) in \cite{shelahaecbook}]\label{perp-weak}
  For $M \in \K_\lambda$, $p, q \in \gS (M)$ nonalgebraic, $p \wkperp q$ if and only if $p \perp q$.
\end{lem}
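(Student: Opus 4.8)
The plan is to dispatch the easy implication first and then to reduce the hard one back to it using the conjugation property. For the direction $p \perp q \Rightarrow p \wkperp q$, I would simply instantiate the definition of orthogonality at $N = M$: since $M \gea M$ and the nonforking extensions of $p$ and $q$ to $M$ are $p$ and $q$ themselves, the definition immediately yields $p \wkperp q$.

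The substantive direction is $p \wkperp q \Rightarrow p \perp q$. Fix an arbitrary $N \gea M$ in $\K_\lambda$ and let $p', q' \in \gS (N)$ be the nonforking extensions of $p$ and $q$; I must show $p' \wkperp q'$. The key idea is that because we are categorical in $\lambda$, every model of size $\lambda$ is superlimit (see the footnote to Lemma \ref{perp-weak}), so the conjugation property (Fact \ref{conj-prop}) applies. Applying it to the two types $p', q'$, which by definition do not fork over $M$ (and noting $2 < \lambda$ so the hypothesis on $\alpha$ is met), produces an isomorphism $f : N \cong M$ with $f (p') = p' \rest M = p$ and $f (q') = q' \rest M = q$.

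It then remains to observe that weak orthogonality is invariant under isomorphism: the isomorphism $f$ carries prime triples representing $q'$ to prime triples representing $f (q') = q$, and carries the unique-extension property for $p'$ to the unique-extension property for $f (p') = p$, and symmetrically in the other direction. Hence $p' \wkperp q'$ holds if and only if $f (p') \wkperp f (q')$, i.e.\ if and only if $p \wkperp q$. Since $p \wkperp q$ holds by hypothesis, we conclude $p' \wkperp q'$; as $N \gea M$ was arbitrary, this gives $p \perp q$.

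The only real subtlety — and the step I expect to require the most care — is the appeal to conjugation, which is precisely where categoricity enters the argument: without it there is no reason for the nonforking extensions $p', q'$ over a larger model to inherit weak orthogonality from $p, q$. Everything else (the $N = M$ instantiation and the routine invariance of $\wkperp$ under isomorphism) is bookkeeping; I would just make sure the invariance claim is stated carefully enough to be used as a black box, since it is the mechanism that transports the assumption $p \wkperp q$ across $f$.
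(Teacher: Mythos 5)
Your proposal is correct and follows essentially the same route as the paper's proof: the easy direction by instantiating the definition of $\perp$ at $N = M$, and the converse by applying the conjugation property (Fact \ref{conj-prop}) to obtain $f : N \cong M$ carrying $p', q'$ to $p, q$ and then invoking invariance of $\wkperp$ under isomorphism. Your write-up is in fact slightly more careful than the paper's (which contains the typo ``$f(q) = q'$'' where $f(q') = q$ is meant), so nothing needs to change.
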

\begin{proof}
  Clearly if $p \perp q$ then $p \wkperp q$. Conversely assume $p \wkperp q$ and let $N \gea M$. Let $p', q'$ be the nonforking extensions to $N$ of $p$, $q$ respectively. We want to show that $p' \wkperp q'$. By the conjugation property, there exists $f: N \cong M$ such that $f (p') = p$ and $f (q) = q'$. Since weak orthogonality is invariant under isomorphism, $p' \wkperp q'$.
\end{proof}

We have arrived to the main theorem of this section. This generalizes \cite[III.12.39]{shelahaecbook} (a full proof of which appears in \cite[B.7]{ap-universal-v10}) which assumes in addition that $\s$ is successful and $\goodp$. For the convenience of the reader, we repeat Hypothesis \ref{good-frame-hyp}.

\begin{thm}\label{not-unidim-frame-1}
  Let $\s = (\K_\lambda, \nf, \Sbs)$ be a categorical good $\lambda$-frame which has primes. If $\K_\lambda$ is not weakly uni-dimensional (see \cite[III.2.2(6)]{shelahaecbook}), then there exists $M \in \K_\lambda$ and $p \in \gS (M)$ such that $\s \rest \Kneg$ (the expansion of $\s$ to $\K_M$ restricted to the models in $\Kneg$, see \cite[2.20, 5.7]{ap-universal-v10}) is a type-full good $\lambda$-frame with primes.
\end{thm}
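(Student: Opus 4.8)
The plan is to use the failure of weak uni-dimensionality to isolate a single type $p$ whose dimension can be discarded, and then to verify that the class of models omitting $p$ inherits the good-frame structure. First I would translate \cite[III.2.2(6)]{shelahaecbook} into the orthogonality language developed above: its failure produces some $M \in \K_\lambda$ and nonalgebraic $p, q \in \gS (M)$ with $p \perp q$. I fix this $p$ and take $\Kneg$ to be the associated sub-AEC of $\K_M$ consisting of the models that omit $p$ (see \cite[5.7]{ap-universal-v10}). Since all models of $\s$ have cardinality $\lambda$ and $\s$ is categorical in $\lambda$, every model of $\Kneg$ is isomorphic to $M$; hence $\Kneg$ is categorical, its model of cardinality $\lambda$ is superlimit \cite[N.2.4(4)]{shelahaecbook}, and the conjugation property (Fact \ref{conj-prop}) is available throughout $\Kneg$. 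The orthogonality $p \perp q$ --- equivalently $p \wkperp q$, by Lemma \ref{perp-weak} --- is what makes $\Kneg$ nontrivial: realizing $q$ through a prime triple over $M$ yields a proper extension of $M$ that still omits $p$, so $M$, and therefore every model of $\Kneg$, has a proper extension in $\Kneg$.

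Next I would record that $\Kneg$ is an AEC with $\LS (\Kneg) = \lambda$ and that $\s \rest \Kneg$ satisfies the good-frame axioms that transfer directly from $\s$. Closure of $\Kneg$ under unions of $\lea$-increasing chains follows from invariance of Galois types together with the continuity of nonforking. No model of $\Kneg$ is maximal, by categoricity and the witness from the previous paragraph (transport a proper extension of $M$ omitting $p$ along an isomorphism). Since the nonforking relation of $\s \rest \Kneg$ is just the restriction of $\nf$ to $\Kneg$, its monotonicity, local character, symmetry, transitivity, uniqueness, and continuity are inherited verbatim, as is stability in $\lambda$.

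The heart of the argument, and the step I expect to be the main obstacle, is to produce the models required by the existence axiom, by amalgamation, and by primeness \emph{within} $\Kneg$. The key fact to establish is that over any $N \in \Kneg$ every nonalgebraic type $r \in \gS (N)$ is orthogonal to the nonforking extension of $p$, and that a prime triple $(a, N, N^\ast)$ of $\s$ realizing such an $r$ must have $N^\ast \in \Kneg$ --- intuitively, the minimality of the prime model $N^\ast$ prevents it from realizing $p$ once $r \perp p$. Granting this, every nonalgebraic type over a model of $\Kneg$ is realized inside $\Kneg$, so $\s \rest \Kneg$ is type-full; the same triple witnesses primeness in $\Kneg$ (its universal property from Definition \ref{prime-def} is tested only against a subclass of the models already available in $\s$); and nonforking amalgams of models in $\Kneg$ again omit $p$, giving amalgamation and the joint embedding property. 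Making this precise --- that at the level of prime triples, orthogonality to $p$ controls omission of $p$ --- is exactly where the prime-based orthogonality of this section (through Lemmas \ref{perp-exist} and \ref{perp-weak}) takes over the role of uniqueness triples in Shelah's treatment. With all the axioms assembled, $\s \rest \Kneg$ is a type-full good $\lambda$-frame with primes.
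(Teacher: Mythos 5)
Your overall architecture is the same as the paper's: the paper proves this theorem by running the proof of \cite[B.7]{ap-universal-v10} verbatim with uniqueness triples replaced by prime triples, with Lemmas \ref{perp-exist} and \ref{perp-weak} doing the work there done by Shelah's NF-based orthogonality. However, the ``key fact'' you isolate is false as stated, and the conclusion you draw from it (``every nonalgebraic type over a model of $\Kneg$ is realized inside $\Kneg$, so $\s \rest \Kneg$ is type-full'') cannot hold. Fix $N \in \Kneg$ and let $p_N$ be the nonforking extension of $p$ to $\gS (N)$. Then $p_N$ is a nonalgebraic type over a model of $\Kneg$, yet no extension of $N$ inside $\Kneg$ realizes it: if $a \in |N'|$ realized $p_N$ with $N' \in \Kneg$, then $\gtp (a / N'; N')$ and the nonforking extension of $p$ to $\gS (N')$ would be two distinct extensions of $p$ to $\gS (N')$, contradicting the defining property of $\Kneg$. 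For the same reason $p_N \not\wkperp p_N$ (a nonalgebraic type is never weakly orthogonal to itself), so ``every nonalgebraic $r \in \gS (N)$ is orthogonal to $p_N$'' fails with $r = p_N$. What ``type-full'' means for $\s \rest \Kneg$ is that the basic types are all the nonalgebraic Galois types \emph{computed in the AEC $\Kneg$}; these form a proper subset of the nonalgebraic types of $\K$ over $N$, namely --- and this is the correct version of your key fact, proved via Lemma \ref{perp-exist} exactly as you suggest --- the nonalgebraic $r \in \gS (N)$ with $p_N \wkperp r$. For such $r$, a prime triple representing $r$ does stay in $\Kneg$, and that is the right statement to carry through the verification.

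A second, smaller, issue is that the clause ``nonforking amalgams of models in $\Kneg$ again omit $p$'' hides most of the actual work in \cite[B.7]{ap-universal-v10}: without successfulness there is no NF relation on models, so amalgamation and the existence axiom inside $\Kneg$ are obtained by resolving an extension $N_0 \lea N_1$ in $\Kneg$ into a chain of prime triples and realizing the corresponding types one step at a time, using weak orthogonality to $p$ (via Lemmas \ref{perp-exist} and \ref{perp-weak}) to remain in $\Kneg$ at each step. With the key fact corrected as above, your outline can be completed along those lines, and it then coincides with the paper's intended proof.
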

\begin{proof}
  Exactly the same as in \cite[B.7]{ap-universal-v10}, except that we replace uniqueness triples with prime triples, and use Lemmas \ref{perp-exist} and \ref{perp-weak} wherever appropriate.
\end{proof}

Assuming tameness and existence of primes above $\lambda$, we can conclude an equivalence between uni-dimensionality and categoricity. Once again, we repeat Hypothesis \ref{good-frame-hyp}.

\begin{thm}\label{unidim-equiv}
  Assume that $\K$ is an AEC categorical in $\lambda$ which has a (type-full) good $\lambda$-frame. If $\K_{\ge \lambda}$ has primes and is $\lambda$-tame, then the following are equivalent:

  \begin{enumerate}
    \item $\K$ is weakly uni-dimensional (see \cite[III.2.2(6)]{shelahaecbook}).
    \item $\K$ is categorical in \emph{all} $\mu > \lambda$.
    \item $\K$ is categorical in \emph{some} $\mu > \lambda$.
  \end{enumerate}
\end{thm}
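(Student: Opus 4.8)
The plan is to establish the cycle of implications $(1) \Rightarrow (2) \Rightarrow (3) \Rightarrow (1)$. The implication $(2) \Rightarrow (3)$ is immediate, since there is always a cardinal strictly above $\lambda$. The two substantive directions are $(1) \Rightarrow (2)$ and $(3) \Rightarrow (1)$, and the engine for the latter is Theorem \ref{not-unidim-frame-1}.

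For $(3) \Rightarrow (1)$ I would argue by contraposition: assuming $\K$ is \emph{not} weakly uni-dimensional, I aim to show $\K$ is categorical in no $\mu > \lambda$, which in particular refutes $(3)$. Since $\s$ is a categorical type-full good $\lambda$-frame with primes (Hypothesis \ref{good-frame-hyp}), Theorem \ref{not-unidim-frame-1} applies and yields $M \in \K_\lambda$ and a nonalgebraic $p \in \gS(M)$ such that $\s \rest \Kneg$ is again a type-full good $\lambda$-frame with primes, now living on the \emph{proper} sub-AEC $\Kneg$ determined by $p$, inside which the dimension of $p$ is bounded. Fix any $\mu > \lambda$. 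Using $\lambda$-tameness to transfer the two good $\lambda$-frames $\s$ and $\s \rest \Kneg$ upward, and using the existence of primes to build suitably saturated models, I would construct two models of size $\mu$: a model $N_1 \in \Kneg$ saturated for the restricted frame, in which $p$ has small dimension, and a model $N_2 \in \K_\mu$ saturated for $\s$, in which $p$ is realized with full dimension $\mu$. Both lie in $\K_\mu$, but the dimension of $p$ is an isomorphism invariant, so $N_1 \not\cong N_2$ and $\K$ is not categorical in $\mu$. As $\mu > \lambda$ was arbitrary, this completes the contrapositive.

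For $(1) \Rightarrow (2)$ I would use that weak uni-dimensionality forces a single dimension, so that categoricity propagates upward. Concretely, by $\lambda$-tameness the good $\lambda$-frame $\s$ extends to a good $\mu$-frame with primes for every $\mu \ge \lambda$, and I would argue that weak uni-dimensionality is inherited by these extensions: if some extension failed to be weakly uni-dimensional, then Theorem \ref{not-unidim-frame-1} at that level, together with the model-building argument of the previous paragraph, would produce two non-isomorphic models and contradict categoricity in $\lambda$. Granting this, weak uni-dimensionality of the $\mu$-frame forces every model of size $\mu^+$ to realize all basic types, i.e.\ to be saturated; since saturated models of a fixed size are unique, $\K$ is categorical in $\mu^+$. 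Taking categoricity in $\lambda$ as the base case, this is the successor step; at limit cardinals I would invoke that a union of an increasing chain of saturated models is saturated (a consequence of the frame's continuity and superstability). By induction on $\mu$, $\K$ is categorical in all $\mu > \lambda$, giving $(2)$.

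The main obstacle I expect lies in $(3) \Rightarrow (1)$: making rigorous that $\s \rest \Kneg$ really yields a size-$\mu$ model in which $p$ has strictly smaller dimension than in one saturated for the full frame $\s$, and that this dimension is genuinely an isomorphism invariant across the two a priori different frames. This is exactly where the interplay of primes, which supply the witnessing extensions realizing or omitting $p$, and $\lambda$-tameness, which pushes the $\lambda$-local computation of dimension up to size $\mu$, must be controlled. A secondary subtlety is the inheritance of weak uni-dimensionality under frame extension used in $(1) \Rightarrow (2)$, which I would prefer to route through Theorem \ref{not-unidim-frame-1} rather than re-prove by hand.
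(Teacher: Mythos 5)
Your global architecture --- the cycle $(1)\Rightarrow(2)\Rightarrow(3)\Rightarrow(1)$, with Theorem \ref{not-unidim-frame-1} driving $(3)\Rightarrow(1)$ by contraposition and an upward transfer giving $(1)\Rightarrow(2)$ --- is the same as the paper's, which simply runs the proof of \cite[5.16]{ap-universal-v10} with uniqueness triples replaced by prime triples. Two of your steps, however, would not go through as written.

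First, in $(1)\Rightarrow(2)$ your argument that weak uni-dimensionality is inherited by the transferred $\mu$-frames is circular. If the $\mu$-frame failed to be weakly uni-dimensional, the model-building argument would produce two non-isomorphic models of some cardinality \emph{above} $\mu$; this contradicts nothing available at that stage, since you only know categoricity in $\lambda$ and categoricity above $\mu$ is precisely what you are trying to establish. (Moreover, Theorem \ref{not-unidim-frame-1} is stated for a \emph{categorical} good frame, and categoricity in $\mu$ is likewise not yet known.) The standard repair is to avoid uni-dimensionality at higher levels altogether: weak uni-dimensionality of the $\lambda$-frame yields categoricity in $\lambda^+$ by a purely frame-theoretic argument (Shelah \cite[III.2]{shelahaecbook}), and then $\lambda$-tameness lets one quote the Grossberg--VanDieren upward transfer from a successor \cite{tamenessthree} to get all $\mu > \lambda$.

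Second, in $(3)\Rightarrow(1)$ you hang the non-isomorphism of $N_1$ and $N_2$ on ``the dimension of $p$'' being an isomorphism invariant, and you rightly flag this as the weak point: in the present setting (no successfulness, no NF) there is no dimension theory to appeal to, and none is needed. The invariant actually used is much cruder. A model $N_1 \in \Kneg$ of size $\mu$ \emph{omits} $p$ over $M$ (for $N \in \Kneg$ the type $p$ has a unique, hence nonforking and nonalgebraic, extension to $\gS(N)$, so no element of $N$ realizes $p$); the role of tameness is only to transfer the good $\lambda$-frame on $\Kneg$ upward so that such an $N_1$ exists in every $\mu > \lambda$. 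On the other side, superstability gives a $\lambda^+$-saturated $N_2 \in \K_\mu$, which realizes every nonalgebraic type over every $\lambda$-sized submodel. Any isomorphism $N_1 \cong N_2$ would transport the omitted type to a type over a $\lambda$-sized submodel of $N_2$ that $N_2$ omits, a contradiction. Replacing ``small dimension versus full dimension $\mu$'' by ``omits $p$ over $M$ versus $\lambda^+$-saturated'' closes that direction and dissolves the invariance worry you raise at the end.
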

\begin{proof}
  Exactly as in the proof of \cite[5.16]{ap-universal-v10}, except that we use Theorem \ref{not-unidim-frame-1} (and replace uniqueness triples with prime triples).
\end{proof}
\begin{remark}
  For the proof of Theorem \ref{unidim-equiv} (and the other categoricity transfer theorems of this paper), the symmetry property of good frames is not needed.
\end{remark}

\section{Categoricity transfers in AECs with primes}

In this section, we prove Theorem \ref{abstract-thm} from the abstract. We first recall that the existence of good frames follow from categoricity, amalgamation, and tameness. We use the following notation:

\begin{notation}
  For $\K$ an AEC with amalgamation and $\lambda > \LS (\K)$, we write $\Ksatp{\lambda}$ for the class of $\lambda$-saturated models in $\K_{\ge \lambda}$.
\end{notation}


\begin{fact}\label{good-frame-fact}
  Let $\K$ be a $\LS (\K)$-tame AEC with amalgamation and no maximal models. Let $\lambda$ and $\mu$ be cardinals such that both $\lambda$ and $\mu$ are strictly bigger than $\LS (\K)$. If $\K$ is categorical in $\mu$, then:
  \begin{enumerate}
    \item $\K$ is stable in every cardinal.
    \item $\Ksatp{\lambda}$ is an AEC with $\LS (\Ksatp{\lambda}) = \lambda$.
    \item There exists a categorical type-full good $\lambda$-frame with underlying class $\Ksatp{\lambda}_\lambda$.
  \end{enumerate}
\end{fact}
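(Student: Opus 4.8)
The plan is to assemble the three conclusions from the general theory of superstability in tame AECs, treating them in order since each successive part feeds on the previous one. None of these are quoted earlier in the excerpt, so I would cite the relevant structure theorems as external inputs; the point of the sketch is the architecture linking them.

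First I would establish (1). Categoricity in $\mu > \LS (\K)$ forces the model of size $\mu$ to be saturated, and a standard type-counting argument (a saturated model of size $\chi$ realizes at most $\chi$ Galois types over each of its strong submodels of size $\chi$) gives stability in every $\chi$ with $\LS (\K) \le \chi < \mu$. To push stability above $\mu$ I would extract superstability from categoricity: categoricity bounds the length of increasing chains witnessing $\mu$-splitting, yielding the required local-character property. In a $\LS (\K)$-tame AEC with amalgamation, superstability transfers both up and down, and the stability spectrum theorem for tame AECs (which uses tameness to transfer the absence of the order property across cardinals) then yields stability in every cardinal $\ge \LS (\K)$.

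For (2), the substantive content is that $\Ksatp{\lambda}$, equipped with the strong-submodel relation inherited from $\K$, is an AEC. Closure under isomorphism and the coherence and chain axioms are inherited from $\K$; the one genuinely new point is that the union of a $\lea$-increasing chain of $\lambda$-saturated models is again $\lambda$-saturated. This is exactly where superstability from (1) is essential, as it supplies the local character needed to realize types in the union. Granting this, a downward Löwenheim--Skolem argument inside $\K$, combined with the existence of saturated models of size $\lambda$ furnished by stability, shows that every model of $\Ksatp{\lambda}$ has a $\lambda$-saturated strong submodel of size $\lambda$, giving $\LS (\Ksatp{\lambda}) = \lambda$.

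For (3) I would define a candidate nonforking relation $\nf$ on $(\Ksatp{\lambda})_\lambda$ via nonsplitting (or coheir), take $\Sbs$ to consist of all nonalgebraic types so that the frame is type-full, and verify the good-frame axioms $\s = (\K_\lambda, \nf, \Sbs)$ must satisfy. Invariance, monotonicity, and the basic structural axioms are routine; existence and extension of nonforking extensions come from stability together with tameness; uniqueness follows from tameness and the fact that every model in $(\Ksatp{\lambda})_\lambda$ is saturated, so nonsplitting extensions are determined; local character and continuity come from superstability and the closure of saturation under unions established in (2); and symmetry is obtained from the symmetry-transfer machinery for tame superstable AECs. Categoricity of $\s$ is immediate, since in $\Ksatp{\lambda}$ the unique model of size $\lambda$ is the saturated one. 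The main obstacle throughout is not the bookkeeping but the two deepest inputs, namely verifying symmetry and proving that unions of $\lambda$-saturated models remain $\lambda$-saturated; everything else follows once these are in hand.
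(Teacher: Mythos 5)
Your proposal is correct and follows essentially the same route as the paper, whose proof simply assembles the same external results in the same order: the Shelah--Villaveces theorem for $\LS(\K)$-superstability from categoricity, the fact (using tameness and superstability) that unions of increasing chains of $\lambda$-saturated models are $\lambda$-saturated so that $\Ksatp{\lambda}$ is an AEC with $\LS(\Ksatp{\lambda}) = \lambda$, the construction of a type-full good $\lambda$-frame on $\Ksatp{\lambda}_\lambda$ via nonsplitting together with the symmetry-transfer machinery, and uniqueness of saturated models for categoricity of the frame. The only cosmetic slip is in your sketch of stability below $\mu$ (the standard argument is that an Ehrenfeucht--Mostowski model of size $\mu$ realizes few Galois types over small submodels, not a counting fact about saturated models of size $\chi$), but this step is subsumed by the quoted Shelah--Villaveces theorem anyway.
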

\begin{proof}
  By the Shelah-Villaveces theorem \cite[2.2.1]{shvi635} (see \cite[5.3]{gv-superstability-v4} for a statement of the version with full amalgamation and the recent \cite{shvi-notes-v3-toappear} for a detailed proof), $\K$ is $\LS (\K)$-superstable (see for example \cite[10.1]{indep-aec-apal}), in particular it is stable in $\LS (\K)$. Now we start to use $\LS (\K)$-tameness. By \cite[6.10]{vv-symmetry-transfer-v3}, $\Ksatp{\lambda}$ is an AEC with $\LS (\Ksatp{\lambda}) = \lambda$. By \cite[10.8]{indep-aec-apal}, there is a type-full good $\lambda$-frame with underlying class $\Ksatp{\lambda}_{\lambda}$ (and in particular stable in $\lambda$) By uniqueness of saturated models, $\Ksatp{\lambda}$ is categorical in $\lambda$.
\end{proof}


We obtain a categoricity transfer for tame AECs with primes categorical in two cardinals. First we prove a more general lemma:

\begin{lem}\label{upward-transfer-lem}
  Let $\K$ be a $\LS (\K)$-tame AEC with amalgamation and arbitrarily large models. Let $\lambda$ and $\mu$ be cardinals such that $\LS (\K) < \lambda < \mu$. 

  If $\K$ is categorical in $\mu$ and $\Ksatp{\lambda}$ has primes, then $\Ksatp{\lambda}$ is categorical in all $\mu' \ge \lambda$.
\end{lem}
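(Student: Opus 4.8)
The plan is to set $\K' := \Ksatp{\lambda}$, show that $\K'$ satisfies every hypothesis of Theorem \ref{unidim-equiv} with $\lambda$ in the role of the base cardinal, and then simply read off the conclusion. Almost all of the structural content is already packaged in Fact \ref{good-frame-fact}, so the proof should be mostly a matter of assembly, with a single genuinely technical step.

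First I would record that $\K$ has no maximal models: with amalgamation this follows from the existence of arbitrarily large models (alternatively, one first derives joint embedding from categoricity in $\mu$). Hence Fact \ref{good-frame-fact} applies, taking its $\mu$ to be the categoricity cardinal and its $\lambda$ to be ours, both of which are $> \LS(\K)$. It yields: (a) $\K$ is stable in every cardinal; (b) $\K' = \Ksatp{\lambda}$ is an AEC with $\LS(\K') = \lambda$; (c) there is a categorical type-full good $\lambda$-frame on $\K'_\lambda$; and, as part of (c), (d) $\K'$ is categorical in $\lambda$.

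Next I would verify the remaining hypotheses of Theorem \ref{unidim-equiv} for $\K'$. Categoricity in $\lambda$ is (d) and the good $\lambda$-frame is (c). Since every member of $\K'$ has size $\ge \lambda$, we have $\K'_{\ge \lambda} = \K'$, which has primes by assumption. For categoricity in some cardinal $> \lambda$: by (b) and (a), $\K'$ is an AEC with $\LS(\K') = \lambda$ having arbitrarily large models (stability and amalgamation let one build $\lambda$-saturated models of every size $\ge \lambda$), so $\K'$ contains a model of size $\mu$; as $\K' \subseteq \K$ and $\K$ is categorical in $\mu$, this model is the unique $\K$-model of size $\mu$, whence $\K'$ is categorical in $\mu$. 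The only delicate point is $\lambda$-tameness of $\K'$: since $\LS(\K) < \lambda$, the class $\K$ is $\lambda$-tame (being $\LS(\K)$-tame), and this must be transferred to the saturated class. Concretely, given distinct $p, q \in \gS(M)$ with $M \in \K'$, $\lambda$-tameness of $\K$ produces a witness $M_0 \lea M$ of size $\lambda$; using that $M$ is $\lambda$-saturated together with stability, one enlarges $M_0$ inside $M$ to a saturated $M_0' \lea M$ of size $\lambda$ (so $M_0' \in \K'$), and $p, q$ still differ over $M_0'$.

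With all hypotheses in place, Theorem \ref{unidim-equiv} applies to $\K'$: since $\K'$ is categorical in some $\mu > \lambda$, it is categorical in all $\mu' > \lambda$. Combining this with categoricity in $\lambda$ from (d) gives that $\K' = \Ksatp{\lambda}$ is categorical in all $\mu' \ge \lambda$, as desired. I expect the tameness-transfer step to be the main obstacle: every other step is a direct appeal to Fact \ref{good-frame-fact} or to categoricity of $\K$, whereas passing tameness from $\K$ down to the class of $\lambda$-saturated models requires the small model-construction argument above and is essentially the only place where one uses that the members of $\K'$ are \emph{saturated} rather than merely that $\K'$ is a good-framed AEC.
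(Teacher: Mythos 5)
Your proposal is correct and follows essentially the same route as the paper: reduce to no maximal models, invoke Fact \ref{good-frame-fact} to get the categorical type-full good $\lambda$-frame on $\Ksatp{\lambda}_\lambda$, and apply Theorem \ref{unidim-equiv}; the extra verifications you supply (categoricity of $\Ksatp{\lambda}$ in $\mu$, and $\lambda$-tameness of the saturated class, which in fact follows immediately from $\LS(\Ksatp{\lambda}) = \lambda$ in Fact \ref{good-frame-fact}(2)) are details the paper leaves implicit in ``apply Theorem \ref{unidim-equiv}.'' The only inaccuracy is your first justification of no maximal models: amalgamation plus arbitrarily large models does not by itself rule out a maximal model in a class without joint embedding, so one must first pass (as the paper does, and as your parenthetical alternative indicates) to the joint-embedding piece containing the model of size $\mu$.
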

\begin{proof}
  By partitioning $\K$ into disjoint AECs, each of which has joint embedding (see for example \cite[16.14]{baldwinbook09}) and working inside the unique piece that is categorical in $\mu$, we can assume without loss of generality that $\K$ has joint embedding. Because $\K$ has arbitrarily large models, $\K$ also has no maximal models.
  
  By Fact \ref{good-frame-fact}, there is a categorical type-full good $\lambda$-frame $\s$ with underlying class $\Ksatp{\lambda}_\lambda$. Now apply Theorem \ref{unidim-equiv} to $\s$ and $\Ksatp{\lambda}$.
\end{proof}

\begin{thm}\label{upward-transfer}
  Let $\K$ be a $\LS (\K)$-tame AEC with amalgamation and arbitrarily large models. Let $\lambda$ and $\mu$ be cardinals such that $\LS (\K) < \lambda < \mu$. Assume that $\K_{\ge \lambda}$ has primes.

  If $\K$ is categorical in both $\lambda$ and $\mu$, then $\K$ is categorical in all $\mu' \ge \lambda$.
\end{thm}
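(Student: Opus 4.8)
The plan is to use Lemma~\ref{upward-transfer-lem} to dispose of the saturated subclass and then exploit the extra hypothesis of categoricity in $\lambda$ to identify $\K_{\gea \lambda}$ with $\Ksatp{\lambda}$ at every cardinal $\gea \lambda$. First I would repeat the reduction from the proof of Lemma~\ref{upward-transfer-lem}: partition $\K$ into AECs with joint embedding and pass to the unique piece containing a model of size $\mu$. Since $\K$ is categorical in $\lambda$, this piece is the only one carrying models of size $\gea \lambda$ and is categorical in both $\lambda$ and $\mu$; having arbitrarily large models, it has no maximal models. Thus I may assume $\K$ has joint embedding and no maximal models while retaining categoricity in $\lambda$ and $\mu$.

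By Fact~\ref{good-frame-fact}, $\K$ is then stable in every cardinal, $\Ksatp{\lambda}$ is an AEC with $\LS(\Ksatp{\lambda}) = \lambda$, and $(\Ksatp{\lambda})_\lambda$ is nonempty, i.e.\ $\K$ has a saturated model of size $\lambda$. Categoricity of $\K$ in $\lambda$ forces this saturated model to be, up to isomorphism, the unique model of $\K$ of size $\lambda$; hence \emph{every} model of $\K$ of size $\lambda$ is $\lambda$-saturated. Moreover, Lemma~\ref{upward-transfer-lem} gives that $\Ksatp{\lambda}$ is categorical in all $\mu' \gea \lambda$.

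The key step is to propagate saturation upward, showing by induction on $\mu' := \|M\|$ that every $M \in \K$ with $\mu' \gea \lambda$ is $\lambda$-saturated. The base case $\mu' = \lambda$ is the previous paragraph. For $\mu' > \lambda$, write $M = \bigcup_{i < \delta} M_i$ as a $\lea$-increasing continuous chain of strong submodels with $\lambda \lea \|M_i\| < \mu'$ (a standard resolution, available for both regular and singular $\mu'$). By the induction hypothesis each $M_i$ is $\lambda$-saturated, so $M_i \in \Ksatp{\lambda}$; since $\Ksatp{\lambda}$ is an AEC it is closed under unions of $\lea$-increasing chains, whence $M = \bigcup_{i<\delta} M_i \in \Ksatp{\lambda}$ is $\lambda$-saturated. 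Consequently $\K$ and $\Ksatp{\lambda}$ have the same underlying models at every cardinal $\gea \lambda$, and the categoricity of $\Ksatp{\lambda}$ in every $\mu' \gea \lambda$ yields categoricity of $\K$ in every $\mu' \gea \lambda$ (for $\mu' = \lambda$ this is the hypothesis).

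The hard part will be the inductive propagation of saturation: in particular, arranging the resolution so that the pieces $M_i$ all have size in $[\lambda, \mu')$ even when $\mu'$ is singular or larger than $\mu$, and checking that the strong substructure relation on $\Ksatp{\lambda}$ is the restriction of $\lea$, so that the chain-closure axiom for $\Ksatp{\lambda}$ applies to $\seq{M_i : i < \delta}$. By contrast, the reduction to joint embedding and the identification of the saturated $\lambda$-model with the unique categoricity model of size $\lambda$ are comparatively routine.
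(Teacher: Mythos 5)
Your proposal is correct and follows the paper's route: reduce to Lemma~\ref{upward-transfer-lem} by showing that categoricity in $\lambda$ forces $\Ksatp{\lambda} = \K_{\ge \lambda}$. The only real difference is how you establish that identification. The paper treats it as immediate, and indeed there is a one-step argument that makes your entire induction unnecessary: given $M \in \K_{\ge \lambda}$ and $M_0 \lea M$ with $\|M_0\| < \lambda$, pick (by the L\"{o}wenheim-Skolem and coherence axioms) some $M_1$ with $M_0 \lea M_1 \lea M$ and $\|M_1\| = \lambda$; by stability in $\lambda$ (Fact~\ref{good-frame-fact}) and categoricity in $\lambda$, $M_1$ is saturated, so every $p \in \gS(M_0)$ is realized in $M_1$ and hence in $M$. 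Your inductive propagation via resolutions and closure of $\Ksatp{\lambda}$ under unions of $\lea$-increasing chains does work (the ordering on $\Ksatp{\lambda}$ is by definition the restriction of $\lea$, and Fact~\ref{good-frame-fact} guarantees it is an AEC), but it buys nothing over the direct argument and imports the machinery of Fact~\ref{good-frame-fact} into a step where only the definition of saturation is needed. The two points you single out as ``the hard part'' are thus avoidable altogether.
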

\begin{proof}
  By categoricity, $\Ksatp{\lambda} = \K_{\ge \lambda}$. Now apply Lemma \ref{upward-transfer-lem}.
\end{proof}
\begin{remark}
  What if $\lambda = \LS (\K)$? Then it is open whether $\K$ has a good $\LS (\K)$-frame (see the discussion in \cite[Section 3]{ss-tame-jsl}). If it does, then we can use Theorem \ref{unidim-equiv}.
\end{remark}

We present two transfers from categoricity in a single cardinal. The first uses the following downward transfer which follows from the proof of \cite[14.9]{baldwinbook09} (an exposition of \cite[II.1.6]{sh394}).

\begin{fact}\label{downward-transfer}
Let $\K$ be an AEC with amalgamation and no maximal models. If $\K$ is categorical in a $\lambda > H_2$ (recall Notation \ref{h-notation}) and the model of size $\lambda$ is $H_2^+$-saturated, then $\K$ is categorical in $H_2$.
\end{fact}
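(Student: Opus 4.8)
The plan is to deduce categoricity in $H_2$ from the stronger statement that \emph{every} model of size $H_2$ is saturated: once this is in hand, uniqueness of saturated models (valid since $\K$ has amalgamation) immediately gives that any two models of size $H_2$ are isomorphic. Thus the whole argument reduces to proving saturation of an arbitrary $M \in \K_{H_2}$, and the role of the hypothesis that the unique (by categoricity) model of size $\lambda$ is $H_2^+$-saturated is to guarantee that all the relevant types get realized in it.

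First I would establish Galois-stability in every $\mu$ with $\LS (\K) \le \mu \le H_2$. If $\K$ were unstable in such a $\mu$, then the resulting order property in $\mu$, blown up via Ehrenfeucht--Mostowski models (available because $\K$ has amalgamation, no maximal models, and hence arbitrarily large models), would produce a model of size $\lambda$ containing a submodel $N_0$ with $|N_0| \le \mu \le H_2$ over which some Galois type is omitted. Since the model of size $\lambda$ is $H_2^+$-saturated, it realizes every type over a submodel of size $\le H_2 < H_2^+$, a contradiction. This is the first place the saturation hypothesis does real work; note that for limit $\lambda$ the $\lambda$-model need not be saturated on its own, which is precisely why the hypothesis is imposed rather than derived.

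Next I would exhibit a saturated model of size $H_2$. Here $H_2 = \hanf{H_1} = \beth_{(2^{H_1})^+}$ is a limit cardinal whose cofinality $(2^{H_1})^+$ exceeds $H_1$. Using stability below $H_2$, build a continuous increasing chain $\langle M_i : i < \cf{H_2}\rangle$ of saturated models with $|M_i|$ cofinal in $H_2$; stability at each cardinal below $H_2$ guarantees the chain can be formed, and its union is a saturated model of size $H_2$ since it realizes every type over a submodel of size $< H_2$.

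The heart of the argument, and the main obstacle, is the last step: showing that an \emph{arbitrary} $M \in \K_{H_2}$ is saturated, not merely that a saturated model of this size exists. Suppose $M$ omits a Galois type $p$ over some $N \le M$ with $|N| < H_2$. The task is to transfer this omission upward, past the relevant Hanf number, to produce a model of size $\lambda$ that omits a copy of $p$ over a submodel of size $< H_2$, contradicting the $H_2^+$-saturation of the $\lambda$-model. The difficulty is that Galois types are not syntactic, so the omission must be encoded through Shelah's presentation theorem and Ehrenfeucht--Mostowski templates built over a well-ordered skeleton of $M$, after which Hanf's theorem on omitting types (calibrated by $H_1 = \hanf{\LS (\K)}$, with the passage to $|N| < H_2$ handled by bootstrapping through increasing degrees of saturation) yields arbitrarily large models omitting $p$. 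This encoding and the attendant cardinal bookkeeping are exactly the technical content of the proof of \cite[14.9]{baldwinbook09} (an exposition of \cite[II.1.6]{sh394}), which I would follow verbatim; the present statement is the instance of that argument tuned so that the choice $H_2 = \hanf{H_1}$ together with the $H_2^+$-saturation input combine to place the categoricity conclusion precisely at $H_2$.
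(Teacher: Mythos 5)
The paper offers no proof of this statement at all: it is quoted as a Fact following from the proof of \cite[14.9]{baldwinbook09} (an exposition of \cite[II.1.6]{sh394}), with the hypothesis that the model of size $\lambda$ is $H_2^+$-saturated standing in for the role that categoricity in a successor plays in the original. Your proposal identifies exactly this, and since your decisive last step is to follow that cited argument verbatim, in substance you take the same route as the paper. Two remarks on your interior sketch, neither of which affects the overall plan precisely because the heart of the matter is delegated to the citation. First, Galois-stability in every $\mu \in [\LS (\K), \lambda)$ already follows from categoricity in $\lambda$ alone by the standard Ehrenfeucht--Mostowski count of types realized in the $\lambda$-model (\cite[8.21]{baldwinbook09}, \cite[I.1.7]{sh394}); the $H_2^+$-saturation hypothesis is not where that comes from. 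Second, your middle step --- producing a saturated model of size $H_2$ as the union of a chain of saturated models indexed by $\cf{H_2} = (2^{H_1})^+$ --- is both unnecessary and not correct as written: $H_2 = \hanf{H_1}$ is a \emph{singular} cardinal of cofinality $(2^{H_1})^+$, so a submodel of the union of size $< H_2$ need not be contained in any link of the chain, and unions of saturated models are not automatically saturated without a further (superstability-type) argument. Existence of a saturated model of cardinality $H_2$ is in any case a consequence of your final step (every model of size $H_2$ is saturated, after which uniqueness of saturated models gives categoricity), not a prerequisite for it, so this step should simply be dropped.
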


To get the optimal tameness bound, we will use\footnote{For a simpler proof of Theorem \ref{main-thm-proof} from slightly stronger assumptions, replace ``$H_2$-tame'' by ``$\chi$-tame for some $\chi < H_2$. Then in the proof one can use Fact \ref{good-frame-fact} together with Theorem \ref{upward-transfer}, both applied to the class $\K_{\ge \chi}$.}:

\begin{fact}[7.9 in \cite{vv-symmetry-transfer-v3}]\label{good-frame-fact-2}
  Let $\K$ be an AEC with amalgamation and no maximal models. Let $\mu \ge H_1$ and assume that $\K$ is categorical in a $\lambda > \mu$ so that the model of size $\lambda$ is $\mu^+$-saturated. Then there exists a categorical type-full good $\mu$-frame with underlying class $\Ksatp{\mu}_{\mu}$.
\end{fact}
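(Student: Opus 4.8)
The plan is to build the good $\mu$-frame directly on the $\mu$-saturated models, using $\mu$-nonsplitting as the notion of nonforking and exploiting saturation---rather than the tameness of Fact \ref{good-frame-fact}---to obtain the uniqueness axiom. Since $\K$ has amalgamation, no maximal models, and is categorical in $\lambda > \mu \ge H_1$ with the model of size $\lambda$ being $\mu^+$-saturated, I would first extract superstability: a Hanf-number and Ehrenfeucht--Mostowski argument in the spirit of Shelah--Villaveces shows that $\K$ is stable in every cardinal in the interval $[\LS (\K), \lambda)$ and that $\mu$-nonsplitting has local character (``no long splitting chains''). This is the form of superstability that drives everything below.

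Next I would fix the underlying class. Using superstability, the union of a $\lea$-increasing chain of $\mu$-saturated models of limit length below $\mu^+$ is again $\mu$-saturated, so $\Ksatp{\mu}$ is an AEC with $\LS (\Ksatp{\mu}) = \mu$, and by uniqueness of saturated models together with categoricity in $\lambda$ it is categorical in $\mu$. On this class I define the frame by taking $\Sbs$ to be \emph{all} nonalgebraic Galois types over the members of $\Ksatp{\mu}_\mu$ (so the frame is type-full), and by declaring, for $M \lea N$ both $\mu$-saturated of size $\mu$, that $\gtp (a / N)$ does not fork over $M$ exactly when it does not $\mu$-split over $M$.

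Then I would verify the good-frame axioms. Invariance and monotonicity are immediate from the definition of $\mu$-splitting; existence of nonforking extensions and local character come straight from superstability. The essential point is \emph{uniqueness}: two extensions of a type to a common $\mu$-saturated model, both nonsplitting over the base $M$, must coincide. Here $\mu$-saturation does exactly the work that tameness does in Fact \ref{good-frame-fact}---since $M$ is $\mu$-saturated it is universal over the small submodels witnessing nonsplitting, so the two extensions can be compared over a common universal model and forced to agree. Continuity then follows from the closure of $\Ksatp{\mu}$ under chains together with local character, and stability of the frame is just stability in $\mu$, already in hand.

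The hard part will be \emph{symmetry}, which is an axiom of a good frame. Without tameness I cannot appeal to a tame symmetry transfer, so I would instead route through the uniqueness of limit models: under superstability the symmetry property of $\mu$-nonsplitting is equivalent to the statement that any two $\mu$-limit models over a common base are isomorphic over that base, and I would deduce the latter from categoricity above $H_1$ with the $\lambda$-model $\mu^+$-saturated. Proving this equivalence and extracting limit-model uniqueness from categoricity is the technical heart of the construction; once it is in place, assembling invariance, monotonicity, existence, uniqueness, local character, continuity, symmetry, and stability yields the desired categorical type-full good $\mu$-frame with underlying class $\Ksatp{\mu}_\mu$.
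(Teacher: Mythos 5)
This statement is one the paper does not prove at all: it is quoted as a black box, namely Fact 7.9 of the cited VanDieren--Vasey symmetry-transfer paper. Measured against the proof in that source, your outline tracks the actual strategy quite closely: superstability (no long splitting chains) for every cardinal in $[\LS(\K),\lambda)$ via Shelah--Villaveces, a nonsplitting-based forking relation on the saturated models of size $\mu$, weak uniqueness of nonsplitting extensions within a single cardinality (using universality of the saturated base) in place of the tameness used in Fact \ref{good-frame-fact}, and symmetry as the crux. So the architecture is right.

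Two caveats, one structural and one substantive. Structurally, your ordering is circular: in the absence of tameness, the closure of $\Ksatp{\mu}$ under unions of increasing chains --- which you need even to know that $\Ksatp{\mu}_{\mu}$ is an AEC in $\mu$ and hence a legitimate underlying class for a frame --- is itself deduced \emph{from} $\mu$-symmetry together with superstability below $\mu$; it is not a consequence of superstability alone, so symmetry must come first, not last. Substantively, the symmetry step is the entire content of the cited theorem and your sketch only names the target. The equivalence actually exploited runs through continuity of reduced towers (symmetry for $\mu$-nonsplitting is equivalent to every reduced tower being continuous, from which uniqueness of limit models follows), and the real work is showing that a discontinuous reduced tower contradicts the $\mu^+$-saturation of the categoricity model via Ehrenfeucht--Mostowski and order-property calculations; this is also the only place the hypothesis $\mu \ge H_1$ enters, and your proposal never invokes it --- a sign that the load-bearing lemma has been deferred rather than proved. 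As a pointer to the literature the proposal is accurate; as a proof it leaves the hard part undischarged.
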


\begin{thm}\label{main-thm-proof}
  Let $\K$ be an AEC with amalgamation. Assume that $\K$ is $H_2$-tame and $\K_{\ge H_2}$ has primes. If $\K$ is categorical in some $\lambda > H_2$, then $\K$ is categorical in all $\lambda' \ge H_2$.
\end{thm}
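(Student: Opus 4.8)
The plan is to descend from categoricity in $\lambda$ down to categoricity in $H_2$, build a good $H_2$-frame living on $\K_{H_2}$, and then spread categoricity back upward via the uni-dimensionality equivalence of Theorem \ref{unidim-equiv}. First I would reduce to the case where $\K$ has joint embedding, exactly as in the proof of Lemma \ref{upward-transfer-lem}: partition $\K$ into disjoint sub-AECs each with joint embedding and pass to the unique piece carrying the (categorical) model of size $\lambda$. Any other piece with a model of size $> H_1$ would have arbitrarily large models and hence a second model of size $\lambda$, so every other piece has all its models of size $\le H_1 < H_2$; thus categoricity of $\K$ at cardinals $\ge H_2$ is equivalent to categoricity of the chosen piece. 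Since this piece has a model of size $\lambda > H_1$, Shelah's presentation theorem together with the Hanf-number-for-omitting-types argument gives arbitrarily large models and hence no maximal models.

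The key preparatory step is to verify that the model of size $\lambda$ is $H_2^+$-saturated. I would obtain this from the standard, tameness-free fact that categoricity in $\lambda$ forces stability in every $\mu$ with $\LS (\K) \le \mu < \lambda$, together with the usual construction of saturated models of size $\lambda$ from stability, amalgamation, and no maximal models (see \cite{baldwinbook09}). This verification, which is exactly where the Hanf thresholds $H_1$ and $H_2$ enter, is the step I expect to be the main obstacle, since everything afterward is essentially an application of the quoted results. With the model of size $\lambda$ now $H_2^+$-saturated and $\K$ having amalgamation and no maximal models, Fact \ref{downward-transfer} applies verbatim and yields that $\K$ is categorical in $H_2$.

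Next I would apply Fact \ref{good-frame-fact-2} with its parameter $\mu$ set to $H_2$ (legitimate since $H_2 \ge H_1$, $\K$ is categorical in $\lambda > H_2$, and the model of size $\lambda$ is $H_2^+$-saturated), producing a categorical type-full good $H_2$-frame with underlying class $\Ksatp{H_2}_{H_2}$. In particular this class is nonempty, so a saturated model of size $H_2$ exists; by categoricity in $H_2$ it is the unique model of that size, whence $\Ksatp{H_2}_{H_2} = \K_{H_2}$ and the frame lives on $\K_{H_2}$. Finally I would invoke Theorem \ref{unidim-equiv} with its frame level taken to be $H_2$: the hypotheses all hold, as $\K$ is categorical in $H_2$, carries a type-full good $H_2$-frame, $\K_{\ge H_2}$ has primes, and $\K$ (hence $\K_{\ge H_2}$) is $H_2$-tame. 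Because $\K$ is categorical in some $\lambda > H_2$, the implication (3)$\Rightarrow$(2) of that theorem gives categoricity in all $\mu' > H_2$; combined with categoricity in $H_2$ itself, this yields categoricity in all $\lambda' \ge H_2$, as desired.
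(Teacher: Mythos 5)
Your overall architecture is exactly the paper's: reduce to joint embedding and no maximal models as in Lemma \ref{upward-transfer-lem}, get the model of size $\lambda$ sufficiently saturated, apply Fact \ref{downward-transfer} to land at $H_2$, build the good $H_2$-frame on $\Ksatp{H_2}_{H_2} = \K_{H_2}$ via Fact \ref{good-frame-fact-2}, and finish with Theorem \ref{unidim-equiv}. The one place you diverge is the saturation step, and that is where you have a genuine gap. You propose to prove that the model of size $\lambda$ is $H_2^+$-saturated from the \emph{tameness-free} fact that categoricity in $\lambda$ gives stability in every $\mu \in [\LS(\K), \lambda)$, plus ``the usual construction of saturated models.'' But stability strictly below $\lambda$ only yields a saturated (or even $H_2^+$-saturated) model of cardinality $\lambda$ when $\cf(\lambda) > H_2$: the chain construction of length $\lambda$ needs every submodel of size $\le H_2$ to be captured at some stage, and for $\cf(\lambda) \le H_2$ this fails, while a chain of length $H_2^+$ starting from a model of size $\lambda$ cannot control the cardinality (there are up to $\lambda^{H_2} > \lambda$ submodels of size $H_2$ to handle at each step). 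Indeed, the tameness-free EM-model argument only gives $\cf(\lambda)$-saturation of the categoricity model, which is exactly why Fact \ref{downward-transfer} carries the $H_2^+$-saturation hypothesis explicitly rather than deriving it.

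The paper closes this gap using the tameness hypothesis, which your outline never invokes at this stage: apply Fact \ref{good-frame-fact} to the AEC $\K_{\ge H_2}$ (legitimate since $\LS(\K_{\ge H_2}) = H_2$ and $\K$ is $H_2$-tame). This gives stability in \emph{every} cardinal, in particular in $\lambda$ itself, and together with the closure of classes of saturated models under unions of chains (item (2) of that Fact, coming from the superstability machinery of \cite{vv-symmetry-transfer-v3}) one obtains a saturated model of cardinality $\lambda$ even when $\lambda$ is singular; categoricity then forces the model of size $\lambda$ to be saturated, hence $H_2^+$-saturated. With that substitution your argument goes through; the remaining steps, including the identification $\Ksatp{H_2} = \K_{\ge H_2}$ and the appeal to Theorem \ref{unidim-equiv}, match the paper's proof.
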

\begin{proof}
  As in the proof of Lemma \ref{upward-transfer-lem}, we can assume without loss of generality that $\K$ has no maximal models. By Fact \ref{good-frame-fact} (applied to the AEC $\K_{\ge H_2}$), $\K$ is in particular stable in $\lambda$, hence the model of size $\lambda$ is saturated. By Fact \ref{downward-transfer}, $\K$ is categorical in $H_2$. By Fact \ref{good-frame-fact-2}, there is a categorical type-full good $H_2$-frame $\s$ with underlying class $\Ksatp{H_2}_{H_2}$. By categoricity in $H_2$, $\Ksatp{H_2} = \K_{\ge H_2}$. Now apply Theorem \ref{unidim-equiv} to $\s$.
\end{proof}

We give a variation on Theorem \ref{main-thm-proof} which gives a lower Hanf number but assumes that classes of saturated models have primes. We will use the following consequence of the omitting type theorem for AECs \cite[II.1.10]{sh394} (or see \cite[14.3]{baldwinbook09}):

\begin{fact}\label{omit-type}
  Let $\K$ be an AEC with amalgamation. Let $\lambda \ge \chi > \LS (\K)$ be cardinals. Assume that all the models of size $\lambda$ are $\chi$-saturated. Then all the models of size at least $\min (\lambda, \sup_{\chi_0 < \chi} \hanf{\chi_0})$ are $\chi$-saturated.
\end{fact}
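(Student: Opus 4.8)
The plan is to split the target range of cardinalities into three pieces and handle each separately. Write $\theta := \sup_{\chi_0 < \chi} \hanf{\chi_0}$, so the threshold appearing in the statement is $\min(\lambda, \theta)$, and recall that a model $N \in \K$ is $\chi$-saturated exactly when it realizes every $p \in \gS(M)$ for each $M \lea N$ with $|M| < \chi$. Models of size exactly $\lambda$ are $\chi$-saturated by hypothesis, so it suffices to treat sizes $> \lambda$ (by an easy upward argument) and sizes $\mu$ with $\min(\lambda,\theta) \le \mu < \lambda$ (by a downward argument via omitting types). Note this second range is nonempty only when $\theta \le \lambda$, in which case $\min(\lambda,\theta) = \theta$.

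For the upward part I would argue directly from Löwenheim--Skolem. Let $N \in \K$ with $|N| > \lambda$, let $M \lea N$ with $|M| < \chi$, and let $p \in \gS(M)$. Since $|M| < \chi \le \lambda < |N|$, the downward Löwenheim--Skolem axiom yields $M' \lea N$ with $M \lea M'$ and $|M'| = \lambda$. By hypothesis $M'$ is $\chi$-saturated, so $p$ is realized in $M'$ and hence in $N$. Thus every model of size $> \lambda$ is $\chi$-saturated.

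For the downward part I would argue by contradiction. Suppose $N \in \K$ has size $\mu$ with $\theta \le \mu < \lambda$ but is not $\chi$-saturated. Then there are $M \lea N$ with $|M| < \chi$ and a Galois type $p \in \gS(M)$ omitted in $N$; set $\chi_0 := |M| + \LS(\K)$, so that $\LS(\K) \le \chi_0 < \chi$ (the second inequality since $\chi > \LS(\K)$ is infinite and both $|M|$ and $\LS(\K)$ are below $\chi$). Because $\chi_0 < \chi$ we have $\hanf{\chi_0} \le \theta$, and therefore $|N| = \mu \ge \theta \ge \hanf{\chi_0}$. So $p$ is a Galois type over the model $M$ that is omitted in a model of size at least its Hanf number $\hanf{\chi_0}$.

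At this point I would invoke the omitting types theorem for AECs \cite[II.1.10]{sh394} (see also \cite[14.3]{baldwinbook09}): via Shelah's presentation theorem the class is coded as a $\mathrm{PC}$-class, and Ehrenfeucht--Mostowski models built over arbitrarily long linear orders continue to omit $p$ over the fixed base $M$ once $p$ is omitted in a single model of size $\ge \hanf{\chi_0}$. This produces arbitrarily large models containing $M$ as a $\lea$-substructure and omitting $p$; applying downward Löwenheim--Skolem to such a model (legitimate since $|M| < \chi \le \lambda$) yields $N^* \in \K$ of size exactly $\lambda$ with $M \lea N^*$ and $p$ still omitted (omission passes to $\lea$-substructures by invariance of Galois types). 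Then $N^*$ has size $\lambda$ and is not $\chi$-saturated, contradicting the hypothesis. The main obstacle is precisely this last step: one must ensure that the \emph{Galois}-type omission, together with the fixed base model $M$, survives the EM construction, which is exactly the content of the cited omitting types theorem and is the reason the Hanf number $\hanf{\chi_0}$ enters. The remaining verifications---that $\hanf{\chi_0} \le \theta \le \mu < \lambda$ for every relevant $\chi_0 < \chi$, so that the appeal is always in range---are routine cardinal bookkeeping.
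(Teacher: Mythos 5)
Your proof is correct and takes essentially the route the paper intends: the paper states this as a Fact derived from the omitting types theorem (\cite[II.1.10]{sh394}, \cite[14.3]{baldwinbook09}) without writing out the derivation, and your argument --- the upward transfer by downward L\"owenheim--Skolem into a $\chi$-saturated model of size $\lambda$, plus the downward contrapositive via Morley's method producing arbitrarily large models omitting $p$ over $M$ and then cutting back to size exactly $\lambda$ --- is precisely that standard derivation. The cardinal bookkeeping ($\chi_0 = |M| + \LS(\K) < \chi$, hence $\hanf{\chi_0} \le \sup_{\chi_0' < \chi}\hanf{\chi_0'} \le \mu$) and the observation that type omission passes to $\lea$-substructures by monotonicity of Galois types are both handled correctly.
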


\begin{thm}\label{main-thm-sat}
  Let $\K$ be a $\LS (\K)$-tame AEC with amalgamation and arbitrarily large models. Let $\lambda > \LS (\K)^+$ be such that $\K$ is categorical in $\lambda$ and let $\chi \in (\LS (\K), \lambda)$ be such that $\Ksatp{\chi}$ has primes. Then $\K$ is categorical in all $\lambda' \ge \min (\lambda, \sup_{\chi_0 < \chi} \hanf{\chi_0})$.
\end{thm}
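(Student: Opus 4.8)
The plan is to deduce the categoricity of $\K$ at every $\lambda' \ge \mu^\ast$, where $\mu^\ast := \min(\lambda, \sup_{\chi_0 < \chi} \hanf{\chi_0})$, from two ingredients: the categoricity of the saturated sub-AEC $\Ksatp{\chi}$ in all cardinals $\ge \chi$, which is exactly what Lemma \ref{upward-transfer-lem} delivers from a \emph{single} categoricity cardinal, and an upward transfer of $\chi$-saturation via the omitting-types machinery of Fact \ref{omit-type}. The saturated model of size $\lambda$ is the bridge between the two: it is the object that is simultaneously the categorical $\K$-model of size $\lambda$ and a witness that $\chi$-saturation holds at $\lambda$.

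First I would carry out the standard reduction from the proof of Lemma \ref{upward-transfer-lem}: partition $\K$ into disjoint AECs each with joint embedding and pass to the unique piece containing the model of size $\lambda$; since $\K$ has arbitrarily large models this piece has no maximal models, so we may assume $\K$ has joint embedding and no maximal models. (The hypothesis $\lambda > \LS(\K)^+$ is precisely what makes the interval $(\LS(\K), \lambda)$ large enough to contain a cardinal $\chi$ at all.) By Fact \ref{good-frame-fact}(1), $\K$ is stable in every cardinal; combined with categoricity in $\lambda$ this forces the model of size $\lambda$ to be saturated, in particular $\chi$-saturated. Thus all models of $\K$ of size $\lambda$ are $\chi$-saturated, and Fact \ref{omit-type} (applied with the pair $\chi \le \lambda$, legitimate since $\LS(\K) < \chi$) upgrades this to: every model of $\K$ of size at least $\mu^\ast$ is $\chi$-saturated. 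I would also record the elementary observation that $\mu^\ast \ge \chi$, which holds because $\hanf{\chi_0} > \chi_0$ forces $\sup_{\chi_0 < \chi} \hanf{\chi_0} \ge \chi$ while $\lambda > \chi$.

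Next I would invoke Lemma \ref{upward-transfer-lem} with the roles of its $\lambda$ and $\mu$ played by $\chi$ and $\lambda$ respectively: the hypotheses $\LS(\K) < \chi < \lambda$, categoricity of $\K$ in $\lambda$, and the existence of primes in $\Ksatp{\chi}$ are precisely what we are given, so the lemma yields that $\Ksatp{\chi}$ is categorical in all $\mu' \ge \chi$. To conclude, fix $\lambda' \ge \mu^\ast$. Since $\mu^\ast \ge \chi$, every model of $\K$ of size $\lambda'$ is $\chi$-saturated by the previous paragraph and hence belongs to $\Ksatp{\chi}$; conversely a model of $\Ksatp{\chi}$ of size $\lambda'$ is just a $\K$-model of that size. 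As $\Ksatp{\chi}$ is categorical in $\lambda'$, all such models are isomorphic, i.e.\ $\K$ is categorical in $\lambda'$.

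The step I expect to be most delicate is the saturation bridge: justifying that the categorical model of size $\lambda$ is genuinely saturated (this combines full stability with categoricity, and one must be slightly careful in the singular case) and then reading off the exact threshold $\mu^\ast$ from Fact \ref{omit-type}. The remaining work is bookkeeping — matching the indices when citing Lemma \ref{upward-transfer-lem}, checking $\mu^\ast \ge \chi$, and observing that above $\mu^\ast$ the models of $\K$ and of $\Ksatp{\chi}$ are literally the same structures, so that categoricity of the latter transfers to the former without needing a second categoricity cardinal as in Theorem \ref{upward-transfer}.
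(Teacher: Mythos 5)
Your proposal is correct and follows essentially the same route as the paper's proof: reduce to no maximal models, apply Lemma \ref{upward-transfer-lem} with $\chi$ in the role of its $\lambda$ to get categoricity of $\Ksatp{\chi}$ everywhere above $\chi$, use stability (Fact \ref{good-frame-fact}) to see the model of size $\lambda$ is saturated, and then Fact \ref{omit-type} to identify $\K$ with $\Ksatp{\chi}$ above the threshold $\min(\lambda, \sup_{\chi_0 < \chi} \hanf{\chi_0})$. The only differences are cosmetic (order of steps, and your explicit remark that the threshold is at least $\chi$, which the paper leaves implicit).
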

\begin{proof}
  As in the proof of Lemma \ref{upward-transfer-lem}, we may assume that $\K$ has no maximal models. By Lemma \ref{upward-transfer-lem}, $\Ksatp{\chi}$ is categorical in all $\lambda' \ge \chi$. By Fact \ref{good-frame-fact}, $\K$ is stable in $\lambda$, so the model of size $\lambda$ is saturated, hence $\chi$-saturated. By Fact \ref{omit-type}, all the models of size at least $\lambda_0' := \min (\lambda, \sup_{\chi_0 < \chi} \hanf{\chi_0})$ are $\chi$-saturated. In other words, $\K_{\ge \lambda_0'} = \Ksatp{\chi}_{\ge \lambda_0'}$. Since $\Ksatp{\chi}$ is categorical in all $\lambda' \ge \chi$, $\K$ is categorical in all $\lambda' \ge \lambda_0'$.
\end{proof}
\begin{remark}
  Theorem \ref{main-thm-proof} and Theorem \ref{main-thm-sat} have different strengths. It could be that we know our AEC $\K$ has primes but it is unclear that $\Ksatp{\chi}$ has primes for any $\chi$. For example, $\K$ could be a universal class (or more generally an AEC admitting intersections). In this case we can use Theorem \ref{main-thm-proof}. On the other hand we may not know that $\K$ has primes but we could know how to build primes in $\Ksatp{\chi}$ (for example $\K$ could be an elementary class or more generally a class of homogeneous models, see the next section). There Theorem \ref{main-thm-sat} applies. 
\end{remark}

\section{Categoricity in homogeneous model theory}

We use the results of the previous section to obtain Shelah's categoricity conjecture for homogeneous model theory, a nonelementary framework extending classical first-order model theory. It was introduced in \cite{sh3}. The idea is to look at a class of models of a first-order theory omitting a set of types and assume that this class has a very nice (sequentially homogeneous) monster model. We quote from the presentation in \cite{grle-homog} but all the results on homogeneous model theory that we use initially appeared in either \cite{sh3} or \cite{hs-independence}.

The following definitions appear in \cite{grle-homog}. They differ from (but are equivalent to) Shelah's original definitions from \cite{sh3}.

\begin{defin}
  Fix a first-order theory $T$. 

  \begin{enumerate}
  \item A set of $T$-types $D$ is a \emph{diagram in $T$} if it has the form $\{\tp (\ba / \emptyset; M) \mid \ba \in \fct{<\omega}{A}\}$ for a model $M$ of $T$. 
  \item A model $M$ of $T$ is a \emph{$D$-model} if $D(M) := \{\tp (\ba / \emptyset; M) \mid \ba \in \fct{<\omega}{|M|}\} \subseteq D$. 
  \item For $D$ a diagram of $T$, we let $\K_D$ be the class of $D$-models of $T$, ordered with elementary substructure.
  \item For $M$ a model of $T$, we write $S_D^{<\omega} (A; M)$ for the set of types of finite tuples over $A$ which are realized in some $D$-model $N$ with $N \preceq M$.
  \end{enumerate}
\end{defin}

\begin{defin}
  Let $T$ be a first-order theory and $D$ a diagram in $T$. A model $M$ of $T$ is \emph{$(D, \lambda)$-homogeneous} if it is a $D$-model and for every $N \succeq M$, every $A \subseteq |M|$ with $|A| < \lambda$, every $p \in S_D^{<\omega} (A; N)$ is realized in $M$.
\end{defin}

\begin{defin}
  We say a diagram $D$ in $T$ is \emph{homogeneous} if for every $\lambda$ there exists a $(D, \lambda)$-homogeneous model of $T$.
\end{defin}

We are not aware of any source explicitly stating the facts below, but they are straightforward to check, so we omit the proof. They will be used without mention.

\begin{prop}\label{kd-aec}
  For $D$ a homogeneous diagram in $T$:

  \begin{enumerate}
    \item $\K_D$ is an AEC with $\LS (\K_D) = |T|$. 
    \item $\K$ has amalgamation, no maximal models, and is fully $\LS (\K)$-tame and short (in fact syntactic and Galois types coincide).
    \item For $\lambda > |T|$, a $D$-model $M$ is $(D, \lambda)$-homogeneous if and only if $M \in \Ksatp{\lambda}_D$.
  \end{enumerate}
\end{prop}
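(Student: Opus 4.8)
The plan is to reduce all three parts to the single fact that Galois types in $\K_D$ coincide with syntactic (first-order) types, which is where essentially all the content lies; the remaining verifications are bookkeeping with the Tarski--Vaught test and the elementary chain theorem. To set this up I would first fix the standard \emph{monster model}: using the existence of $(D,\lambda)$-homogeneous models for every $\lambda$, one chooses a $(D,\bar\lambda)$-homogeneous $\mathfrak{C}$ for $\bar\lambda$ suitably large, which is sequentially homogeneous and universal for $D$-models of size $<\bar\lambda$, and works inside it (see \cite{sh3, grle-homog}).

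For (1), I would check the AEC axioms directly. Closure under isomorphism is immediate since $D$ is isomorphism-invariant; that $\preceq$ refines substructure and satisfies coherence is the Tarski--Vaught test; and the chain axioms follow from Tarski's elementary chain theorem together with the observation that being a $D$-model is a property of finite tuples. Indeed, along a $\preceq$-increasing chain every finite tuple of the union lies in some model of the chain and has the same first-order type there as in the union, so the union is again a $D$-model, and the same observation gives downward closure under $\preceq$. For Löwenheim--Skolem, given $N \in \K_D$ and $A \subseteq |N|$, the first-order downward Löwenheim--Skolem theorem yields $M \preceq N$ with $A \subseteq |M|$ and $\|M\| \le |A| + |T|$, and $M \in \K_D$ by downward closure; hence $\LS(\K_D) = |T|$.

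For (2), amalgamation and no maximal models are the familiar consequences of the monster model: any $D$-model of size $<\bar\lambda$ embeds elementarily into $\mathfrak{C}$ by a back-and-forth construction using its realization property, a span $M_1 \succeq M_0 \preceq M_2$ is amalgamated by embedding both sides into $\mathfrak{C}$ and gluing along $M_0$ via sequential homogeneity, and no maximal models follows by embedding any $M$ into a homogeneous model of strictly larger cardinality. The core of (2) is that two finite tuples realize the same Galois type over a $D$-model iff they realize the same first-order type: the forward direction is automatic and the converse is precisely sequential homogeneity of $\mathfrak{C}$, which provides an automorphism carrying one tuple to the other. Granting this, full $\LS(\K)$-tameness and shortness are immediate, because a first-order type is a set of formulas each using finitely many variables and finitely many parameters: any disagreement between two such types is witnessed over a finite index set and, after applying Löwenheim--Skolem to the finitely many parameters, over a submodel of size $\le |T| = \LS(\K)$.

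For (3), I would pass once more through this coincidence. If $M$ is $(D,\lambda)$-homogeneous and $p$ is a Galois type over some $M_0 \preceq M$ with $\|M_0\| < \lambda$ that is realized in an extension, then $p$ is a first-order type over the set $|M_0|$ of size $<\lambda$, hence lies in $S_D^{<\omega}(|M_0|; \mathfrak{C})$ and is realized in $M$. Conversely, if $M$ is $\lambda$-Galois-saturated, $A \subseteq |M|$ has size $<\lambda$, and a finite tuple $\ba$ in an extension $N \succeq M$ realizes some $p \in S_D^{<\omega}(A; N)$, I would use Löwenheim--Skolem to pick $M_0 \preceq M$ with $A \subseteq |M_0|$ and $\|M_0\| \le |A| + |T| < \lambda$, read $\tp(\ba / M_0)$ as a Galois type over $M_0$, realize it in $M$ by saturation, and restrict to $A$. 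The one delicate point is that $(D,\lambda)$-homogeneity speaks of finite tuples whereas Galois saturation is usually phrased for single elements; I would bridge this by the standard element-by-element argument showing a $\lambda$-Galois-saturated model realizes all Galois types of length $<\lambda$ over sets of size $<\lambda$. I expect the coincidence of syntactic and Galois types to be the only genuinely nonroutine step---once it is in hand, the locality properties and the homogeneity/saturation equivalence reduce to classical facts, and the rest is bookkeeping.
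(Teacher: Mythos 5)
The paper gives no proof of Proposition \ref{kd-aec} at all --- it declares these facts ``straightforward to check'' and omits the verification --- so there is nothing to compare against; your write-up is precisely the standard argument one would supply, and it is correct. The reduction of everything to the coincidence of Galois and syntactic types via the homogeneous monster model, together with the routine checks of the AEC axioms, L\"owenheim--Skolem, amalgamation, and the saturation/homogeneity equivalence (including the bridge from $1$-types to finite tuples, which is where $\lambda > |T| = \LS(\K)$ is used), is exactly the intended content.
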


Note that in this framework it also makes sense to talk about the $|T|$-saturated models, so we let:

\begin{defin}
  Let $\Ksatp{|T|}_D$ be the class of $(D, |T|)$-homogeneous models, ordered by elementary substructure.
\end{defin}

To apply the results of the previous section, we must give conditions under which $\Ksatp{\chi}_D$ has primes. This is implicit in \cite[Section 5]{sh3}:

\begin{fact}\label{homog-primes}
  Let $D$ be a homogeneous diagram in $T$. If $\K_D$ is stable in $\chi \ge \LS (\K)$ then $\Ksatp{\chi}_D$ has primes.
\end{fact} 
\begin{proof}
  By \cite[5.11(1)]{sh3} (with $\mu, \lambda$ there standing for $\chi, \chi$ here; in particular $2^\mu > \lambda$), $D$ satisfies a property Shelah calls $(P, \chi, 1)$ (a form of density of isolated types, see \cite[5.4]{sh3}). By the proof of \cite[5.2(1)]{sh3} and \cite[5.3(1)]{sh3} there, this implies that the class $\Ksatp{\chi}_D$ has primes.
\end{proof}

We immediately obtain:

\begin{thm}\label{abstract-thm-2-proof-0}
  If a homogeneous diagram $D$ in a first-order theory $T$ is categorical in a $\lambda > |T|^+$, then it is categorical in all $\lambda' \ge \min (\lambda, \hanf{|T|})$.
\end{thm}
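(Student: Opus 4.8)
The plan is to apply Theorem \ref{main-thm-sat} to the AEC $\K := \K_D$ with the specific choice $\chi = |T|^+$. By Proposition \ref{kd-aec}, $\K_D$ is an AEC with $\LS (\K_D) = |T|$, has amalgamation and no maximal models (hence arbitrarily large models), and is fully $\LS (\K)$-tame and short, so in particular $\LS (\K)$-tame. Thus all the structural hypotheses of Theorem \ref{main-thm-sat} are met, and the hypothesis $\lambda > |T|^+ = \LS (\K_D)^+$ is exactly the inequality $\lambda > \LS (\K)^+$ that Theorem \ref{main-thm-sat} requires.

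The remaining input is that $\Ksatp{\chi}_D$ has primes for the chosen $\chi$, which by Fact \ref{homog-primes} reduces to stability of $\K_D$ in $\chi$. First I would invoke Fact \ref{good-frame-fact} (applied to $\K_D$ and the categoricity in $\lambda$): this gives that $\K_D$ is stable in every cardinal, in particular in $\chi = |T|^+ \ge \LS (\K_D)$. Hence by Fact \ref{homog-primes}, $\Ksatp{|T|^+}_D$ has primes, so the hypothesis on primes in Theorem \ref{main-thm-sat} is satisfied with $\chi = |T|^+$.

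Next I would check that $\chi = |T|^+$ lies in the required interval $(\LS (\K_D), \lambda) = (|T|, \lambda)$: the lower bound $|T| < |T|^+$ is immediate, and the upper bound $|T|^+ < \lambda$ is precisely the assumption $\lambda > |T|^+$. Applying Theorem \ref{main-thm-sat} then yields categoricity in all $\lambda' \ge \min (\lambda, \sup_{\chi_0 < |T|^+} \hanf{\chi_0})$. The final step is the elementary cardinal computation: the cardinals $\chi_0 < |T|^+$ are exactly those with $\chi_0 \le |T|$, and since $\hanf{\cdot}$ is nondecreasing the supremum is attained at $\chi_0 = |T|$, giving $\sup_{\chi_0 < |T|^+} \hanf{\chi_0} = \hanf{|T|}$. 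This delivers categoricity in all $\lambda' \ge \min (\lambda, \hanf{|T|})$, as desired.

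I do not anticipate a serious obstacle: all the analytic content lives in the earlier sections, and the argument amounts to feeding the homogeneous-model-theory facts into Theorem \ref{main-thm-sat} with the correct value of $\chi$. The one point demanding care is the choice $\chi = |T|^+$ rather than a larger cardinal, which is forced by the need for $\sup_{\chi_0 < \chi} \hanf{\chi_0}$ to evaluate to exactly $\hanf{|T|}$, together with the verification that this $\chi$ still lands strictly below $\lambda$ — which is where the hypothesis $\lambda > |T|^+$ is used.
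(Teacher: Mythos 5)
Your proposal is correct and follows essentially the same route as the paper: the paper's proof likewise notes that $\K_D$ is stable in all cardinals by Fact \ref{good-frame-fact} and then combines Fact \ref{homog-primes} with Theorem \ref{main-thm-sat}. You have merely made explicit the choice $\chi = |T|^+$ and the computation $\sup_{\chi_0 < |T|^+} \hanf{\chi_0} = \hanf{|T|}$, which the paper leaves implicit.
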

\begin{proof}
  Note that $\K_D$ is stable in all cardinals by Fact \ref{good-frame-fact}. So we can combine Fact \ref{homog-primes} and Theorem \ref{main-thm-sat}.
\end{proof}

This proves Theorem \ref{abstract-thm-2} in the abstract modulo a small wrinkle: the case $\lambda = |T|^+$. One would like to use the categoricity transfer of Grossberg and VanDieren \cite{tamenessthree} but they assume that $\K$ is categorical in a successor $\lambda > \LS (\K)^+$ since otherwise it is in general unclear whether there is a superlimit (see footnote \ref{superlim-footnote}) in $\LS (\K)$ (one can get around this difficulty if $\LS (\K) = \aleph_0$, see \cite{lessmann-upward-transfer}). However in the case of homogeneous model theory we can show that there \emph{is} a superlimit, completing the proof. The key is that under stability, $(D, |T|)$-homogeneous models are closed under unions of chains. This is claimed without proof by Shelah in \cite[1.15]{sh54}. We give a proof here which imitates the first-order proof of Harnik \cite{harniksat}. Still it seems that a fair amount of forking calculus has to be developed first. All throughout, we assume:

\begin{hypothesis}
  $D$ is a homogeneous diagram in a first-order theory $T$. We work inside a $(D, \bar{\kappa})$-homogeneous model $\sea$ for $\bar{\kappa}$ a very big cardinal. In particular, all sets are assumed to be $D$-sets (see \cite[2.1(2)]{grle-homog}).
\end{hypothesis}

The following can be seen as a first approximation for forking in the homogeneous context. It was used by Shelah to prove the stability spectrum theorem in this framework (see Fact \ref{stab-spectrum}). We will not use the exact definition, only its consequences.

\begin{defin}[4.1 in \cite{sh3}]
  A type $p \in S_D^{<\omega} (A)$ \emph{strongly splits} over $B \subseteq A$ if there exists an indiscernible sequence $\seq{\ba_i : i < \omega}$ over $B$ and a formula $\phi (\bx, \by)$ such that $\phi (\bx, \ba_0) \in p$ and $\neg \phi (\bx, \ba_1) \in p$.
\end{defin}
\begin{defin}
  $\kappa (D)$ is the minimal cardinal $\kappa$ such that for all $A$ and all $p \in S_D^{<\omega} (A)$, there exists $B \subseteq A$ with $|B| < \kappa$ so that $p$ does not strongly split over $B$.
\end{defin}

The following is due to Shelah \cite[4.4]{sh3}. See also \cite[4.11, 4.14, 4.15]{grle-homog}:

\begin{fact}\label{stab-spectrum}
  If $D$ is stable in $\lambda_0 \ge |T|$, then $\kappa (D) < \infty$ and for $\lambda \ge \lambda_0$, $D$ is stable in $\lambda$ if and only if $\lambda = \lambda^{<\kappa (D)}$.
\end{fact}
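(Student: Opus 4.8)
The statement is the stability spectrum theorem for a homogeneous diagram, and the plan is to transpose Shelah's first-order argument, using strong splitting as the surrogate for forking. I would first set up the basic calculus of strong splitting (all available in \cite{sh3} and \cite{grle-homog}): monotonicity of the base, the reformulation that $p$ does not strongly split over $B$ exactly when $p$ assigns $\phi(\bx, \ba_0)$ and $\phi(\bx, \ba_1)$ the same truth value for every $B$-indiscernible sequence $\seq{\ba_i : i < \omega}$ and every $\phi(\bx, \by)$, and---most importantly---a \emph{uniqueness} lemma playing the role of stationarity: over a suitable small base $B$, a type over $A \supseteq B$ that does not strongly split over $B$ is determined by $p \rest B$ up to boundedly many possibilities. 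This uniqueness lemma is the technical heart, and the step I expect to be the main obstacle: unlike non-forking over a model there is no free stationarity, so one must exploit the homogeneity of $\sea$ together with the realization of types over small sets to pin the multiplicity down.

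For $\kappa(D) < \infty$ I would argue by contraposition through the order property. If $\kappa(D) = \infty$ then for every $\mu$ there is a type strongly splitting over every subset of its domain of size $< \mu$; unwinding the witnessing $B$-indiscernible sequences and separating formulas, and pigeonholing over the $|T|$-many formulas, one assembles for a single $\phi$ arbitrarily long $\phi$-ordered configurations, i.e.\ $D$ has the order property. But the order property produces, over a dense indiscernible set of size $\lambda$ inside $\sea$, one $\phi$-type per Dedekind cut, hence $2^\lambda > \lambda$ types over a set of size $\lambda$ for \emph{every} $\lambda$; this contradicts stability in $\lambda_0$. A finer bookkeeping of the same splitting analysis yields the sharper bound $\kappa(D) \le |T|^+$, which I will want for the counting below (so that all bases have size $\le |T| \le \lambda_0$).

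For the direction $\lambda = \lambda^{<\kappa(D)} \Rightarrow$ stability in $\lambda$ (for $\lambda \ge \lambda_0$), I would count types over a fixed $A$ with $|A| = \lambda$. Every $p \in S_D^{<\omega}(A)$ fails to strongly split over some $B_p \subseteq A$ with $|B_p| < \kappa(D) \le |T|^+$, so $|B_p| \le |T| \le \lambda_0$; there are at most $|A|^{<\kappa(D)} = \lambda^{<\kappa(D)} = \lambda$ candidate bases. Since $|B_p| \le \lambda_0$, stability in $\lambda_0$ gives $|S_D^{<\omega}(B_p)| \le \lambda_0 \le \lambda$, and the uniqueness lemma bounds, for each base $B$ and each type over $B$, the number of non-strongly-splitting extensions to $A$. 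Multiplying the three bounded quantities gives $|S_D^{<\omega}(A)| \le \lambda$. The only genuinely cardinal-arithmetic input is $\lambda = \lambda^{<\kappa(D)}$; the substantive model-theoretic input is again the uniqueness lemma.

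For the converse I prove the contrapositive: assume $\lambda < \lambda^{<\kappa(D)}$ and let $\mu$ be \emph{least} with $\lambda^\mu > \lambda$, so $\mu < \kappa(D)$ and $\lambda^{<\mu} = \lambda$ by the minimality of $\mu$. Since $\mu < \kappa(D)$, minimality of $\kappa(D)$ supplies a type strongly splitting over every subset of its domain of size $< \mu$. Using these splittings I would build a tree $\seq{\ba_\eta : \eta \in \fct{<\mu}{\lambda}}$ whose parameter set has size $\lambda^{<\mu} = \lambda$, arranged so that for a fixed formula $\phi$ distinct branches $\eta \in \fct{\mu}{\lambda}$ force incompatible decisions on some $\phi(\bx, \ba_{\eta \rest i})$ and hence determine pairwise distinct types. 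This exhibits $\lambda^\mu > \lambda$ types over a set of size $\lambda$, so $D$ is unstable in $\lambda$. Combining the two directions with the first two paragraphs' bound on $\kappa(D)$ yields the Fact.
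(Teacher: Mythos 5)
The paper does not prove this statement: it is imported as a black box from Shelah \cite[4.4]{sh3} (see also \cite[4.11, 4.14, 4.15]{grle-homog}), so there is no in-paper argument to compare yours against. Your outline --- local character of strong splitting via the order property, counting types by non-strongly-splitting bases for the positive direction, and a splitting tree of height $\mu$ over $\lambda^{<\mu}=\lambda$ parameters for the negative direction --- is the route those sources actually take, so as a reconstruction of the quoted proof it is on target.

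Three concrete cautions. First, the bound $\kappa(D) \le |T|^+$ you invoke for the bookkeeping is the first-order bound and is not what the homogeneous-model-theory literature establishes; the cited sources only get $\kappa(D) < \hanf{|T|}$, and the first-order argument breaks precisely because, without compactness, stretching the $\omega$-indiscernible sequences witnessing strong splitting along a chain of length $|T|^+$ is not free. Fortunately your count does not need it: what is needed is $|B_p| \le \lambda_0$ so that stability in $\lambda_0$ bounds $|S_D^{<\omega}(B_p)|$, and this follows from $\kappa(D) \le \lambda_0^+$, which the same chain/tree argument gives directly against stability in $\lambda_0$. Second, both the ``order property implies instability everywhere'' step and the branch-counting in your tree require the exhibited types to be \emph{$D$-types}, i.e.\ realized in $D$-models inside $\sea$; in the absence of compactness this is exactly where Ehrenfeucht--Mostowski models and the Hanf number $\hanf{|T|}$ enter the subject, and it should not be waved through. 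Third, your worry about the uniqueness lemma is well placed: over an arbitrary base of size $<\kappa(D)$ stationarity genuinely fails (this is why the paper's Fact \ref{forking-facts} insists the base be a $(D,\lambda)$-homogeneous model), and the published proofs do not prove ``boundedly many extensions of $p \rest B$'' but instead show that a type over $A$ not strongly splitting over $B$ is determined by its restriction to $B$ together with a set of representatives of the types over $B$ realized in $A$ --- which is also where stability in $\lambda_0$, rather than mere smallness of $B$, is consumed.
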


We can define forking using strong splitting:

\begin{defin}[3.1 in \cite{hs-independence}]\label{forking-def}
  For $A \subseteq B$, $p \in S_D^{<\omega} (B)$ \emph{does not fork over $A$} if there exists $A_0 \subseteq A$ such that:

  \begin{enumerate}
  \item $|A_0| < \kappa (D)$.
  \item For every set $C$, there exists $q \in S_D^{<\omega} (B \cup C)$ such that $q$ extends $p$ and $q$ does not strongly split over $A_0$.
  \end{enumerate}
\end{defin}

Assuming that the base has a certain degree of saturation, forking behaves well:

\begin{fact}\label{forking-facts}
  Assume that $D$ is stable in $\lambda \ge |T|$. Let $M$ be $(D, \lambda)$-homogeneous and let $A \subseteq B \subseteq C$ be sets.

  \begin{enumerate}
  \item (Monotonicity) For $p \in S_D^{<\omega} (C)$, if $p$ does not fork over $A$, then $p \rest B$ does not fork over $A$ and $p$ does not fork over $B$.
  \item (Extension-existence) For any $p \in S_D^{<\omega} (M)$, there exists $q \in S_D^{<\omega} (M \cup B)$ that extends $p$ and does not fork over $M$. Also, $q$ is algebraic if and only if $p$ is. Moreover if $p \in S_D^{<\omega} (M)$ does not strongly split over $A_0 \subseteq |M|$, then $p$ does not fork over $A_0$.
  \item (Uniqueness) If $p, q \in S_D^{<\omega} (M \cup B)$ both do not fork over $M$ and are such that $p \rest M = q \rest M$, then $p = q$.
  \item (Transitivity) For any $p \in S_D^{<\omega} (M \cup B)$, if $p$ does not fork over $M$ and $p \rest M$ does not fork over $A_0 \subseteq |M|$, then $p$ does not fork over $A_0$.
  \item (Symmetry) If $\tp (\bb / M\ba)$ does not fork over $M$, then $\tp (\ba / M \bb)$ does not fork over $M$.
  \item (Local character) For any $p \in S_D^{<\omega} (M)$, there exists $A_0 \subseteq |M|$ such that $|A_0| < \kappa (D)$ and $p$ does not fork over $A_0$. Moreover, for any $\seq{M_i : i < \delta}$ increasing chain of $(D, \lambda)$-homogeneous models, if $p \in S_D^{<\omega} (\bigcup_{i < \delta} M_i)$ and $\cf{\delta} \ge \kappa (D)$, then there exists $i < \delta$ and $A_0 \subseteq |M_i|$ such that $|A_0| < \kappa (D)$ and $p$ does not fork over $A_0$.
  \end{enumerate}
\end{fact}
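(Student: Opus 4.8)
The whole package rests on the combinatorial parameter $\kappa (D)$ and the stability spectrum (Fact \ref{stab-spectrum}): since $D$ is stable in $\lambda \ge |T|$ we have $\kappa (D) < \infty$ and, crucially, we may extract $A_0$-indiscernible sequences freely inside $\sea$. The plan is to isolate a single technical engine and then read off all six clauses from it. The engine is a pivotal sublemma: \emph{over a base $A_0$ sitting inside a sufficiently homogeneous model, a type does not strongly split over $A_0$ if and only if it is invariant under $\equiv_{A_0}$-conjugation}, i.e.\ $\phi (\bx, \bc) \in p \Leftrightarrow \phi (\bx, \bc') \in p$ whenever $\bc \equiv_{A_0} \bc'$. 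The easy direction is immediate (the first two terms of an $A_0$-indiscernible sequence realize the same type over $A_0$); the nontrivial direction is where stability enters: given $\bc \equiv_{A_0} \bc'$ witnessing a failure of invariance, one uses saturation together with a Ramsey-and-compactness argument in $\sea$ to build an $A_0$-indiscernible sequence exhibiting strong splitting. I expect this extraction to be the most delicate point after symmetry.

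From the sublemma I would prove the real workhorse, a \emph{key lemma} on non-strongly-splitting extensions: if $M$ is $(D,\lambda)$-homogeneous, $A_0 \subseteq |M|$ with $|A_0| < \kappa (D)$, and $p \in S_D^{<\omega}(M)$ does not strongly split over $A_0$, then for every $C \supseteq |M|$ there is a \emph{unique} extension of $p$ to $C$ that does not strongly split over $A_0$. For existence, define the extension formula-by-formula by declaring $\phi (\bx, \bc) \in q$ iff $\phi (\bx, \bc') \in p$ for some (equivalently, by the sublemma, any) $\bc' \in |M|$ with $\bc' \equiv_{A_0} \bc$; here one uses that $|A_0| < \kappa (D) \le \lambda$, so $M$ realizes every type over $A_0$. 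Finite consistency and non-strong-splitting of $q$ follow from the sublemma, and uniqueness is the same computation run backwards, since the value of $q$ on each $\phi (\bx, \bc)$ is forced by $\tp (\bc / A_0)$ and by the common restriction $p$.

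With the key lemma in hand, most clauses are bookkeeping. \textbf{Monotonicity} is immediate from the definition: the witness $A_0 \subseteq A$ still works for $p \rest B$ (restricting a non-strongly-splitting type preserves non-strong-splitting), and $A_0 \subseteq A \subseteq B$ witnesses non-forking over the larger base. \textbf{Local character}, first part, is verbatim the definition of $\kappa (D)$; for the chain version, $p$ fails to strongly split over some $A_0$ of size $< \kappa (D)$, and since $\cf{\delta} \ge \kappa (D)$ the small set $A_0$ is swallowed by a single $M_i$, after which non-strong-splitting over $A_0 \subseteq |M_i|$ upgrades to non-forking via the ``moreover''. \textbf{Extension-existence} is local character (to produce $A_0$) followed by the key lemma (to produce, and for every $C$ re-produce, the non-splitting extension), the ``moreover'' being exactly this implication; preservation of (non)algebraicity is a one-line check on realizations. \textbf{Uniqueness} reduces to the key lemma: non-forking over $M$ forces non-strong-splitting over some $A_0 \subseteq |M|$ with $|A_0| < \kappa (D)$, so two non-forking extensions agreeing on $M$ are, after unioning the two witnesses, two non-strongly-splitting extensions of a common type and hence equal. \textbf{Transitivity} then combines the two: letting $r$ be the key-lemma extension of $p \rest M$ over $A_0$, one checks $r$ does not fork over $A_0$ and, by uniqueness of the non-forking extension over $M$, that $r$ coincides with $p$, whence $p$ does not fork over $A_0$.

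The genuine obstacle is \textbf{symmetry}, which does not follow formally from the extension-and-uniqueness machinery. Here I would construct a Morley sequence for $\tp (\ba / M\bb)$ over $A_0$ --- an $A_0$-independent indiscernible sequence inside $\sea$, whose existence and length are controlled by the stability bound $\lambda = \lambda^{<\kappa (D)}$ of Fact \ref{stab-spectrum} --- and exploit the intrinsic symmetry of indiscernibility: interchanging the first two terms of such a sequence converts non-forking of $\tp (\bb / M\ba)$ into non-forking of $\tp (\ba / M\bb)$, with uniqueness transporting the computation across the sequence. This is the Hyttinen--Shelah argument, and getting the independent sequence and the averaging bookkeeping exactly right is where the real work of the proof lies.
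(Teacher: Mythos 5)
The paper does not reprove these facts from scratch: items (1)--(5) and the first half of (6) are quoted directly from Hyttinen--Shelah \cite{hs-independence} (monotonicity is 3.2(i), extension-existence is 3.2(iii),(v),(vi), uniqueness is 3.4, transitivity is 3.5(iv), symmetry is 3.6), using that a $(D,\lambda)$-homogeneous model is $a$-saturated in their sense. The only new argument is for the ``moreover'' chain version of local character, and there your reasoning (the witness $A_0$ of size $<\kappa(D)$ lands in some $M_i$ by the cofinality hypothesis, and non-strong-splitting over $A_0\subseteq |M_i|$ upgrades to non-forking over $M_i$ and then, by transitivity, over $A_0$) matches the paper's almost verbatim. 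Your proposal is thus an attempt to reconstruct the Hyttinen--Shelah machinery itself, which is legitimate but must be judged on its own terms.

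Judged on those terms, the central sublemma on which you hang everything is false as stated. Invariance of $p$ under $\equiv_{A_0}$-conjugation of parameters is exactly \emph{non-splitting} over $A_0$, and non-splitting is strictly stronger than non-strong-splitting: strong splitting requires the two offending parameters to occur as the first two terms of an infinite $A_0$-indiscernible sequence, and a pair of tuples realizing the same type over $A_0$ need not extend to such a sequence --- this is precisely the gap between splitting and strong splitting, and it is why $\kappa(D)$ is defined via the latter. Your proposed Ramsey-and-compactness extraction therefore cannot close the hard direction: given $\bc\equiv_{A_0}\bc'$ witnessing a failure of invariance, there is in general no $A_0$-indiscernible sequence beginning with $\bc,\bc'$ to exhibit strong splitting. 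The correct statement, and the one used in \cite{hs-independence}, replaces $\equiv_{A_0}$ by conjugation over the $a$-saturated model $M\supseteq A_0$ (equivalently, equality of strong types over $A_0$): if $p$ does not strongly split over $A_0\subseteq|M|$ and $\bc\equiv_M\bc'$, then $\phi(\bx,\bc)\in p\Leftrightarrow\phi(\bx,\bc')\in p$, because one can realize $\tp(\bc/M)$ by a long independent sequence over $M$ which, by stability, is an $A_0$-indiscernible set into which both $\bc$ and $\bc'$ embed. This is where the homogeneity of $M$ enters essentially and it is invisible in your formulation. The defect propagates to your ``key lemma'': the clause ``$\phi(\bx,\bc)\in q$ iff $\phi(\bx,\bc')\in p$ for some $\bc'\in|M|$ with $\bc'\equiv_{A_0}\bc$'' is not even well-defined, since different such $\bc'$ can disagree; one must instead choose $\bc'\in|M|$ conjugate to $\bc$ over enough of $M$ to begin an $A_0$-indiscernible sequence with it. Your identification of symmetry as the genuinely hard clause is correct, but as written that part is a pointer to \cite[3.6]{hs-independence} rather than a proof.
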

\begin{proof}
  We use freely that (by \cite[1.9(iv)]{hs-independence}) a $(D, \lambda)$-homogeneous model is an $a$-saturated model in the sense of \cite[1.8(ii)]{hs-independence}. Monotonicity is \cite[3.2.(i)]{hs-independence}, extension-existence is given by \cite[3.2.(iii), (v), (vi)]{hs-independence} and the definitions of $\kappa (D)$ and forking. Uniqueness is \cite[3.4]{hs-independence}, transitivity is \cite[3.5.(iv)]{hs-independence}, and symmetry is \cite[3.6]{hs-independence}. For local character, we prove the moreover part and the first part follows by taking $M_i := M$ for all $i < \delta$. Let $M_\delta := \bigcup_{i < \delta} M_i$. Without loss of generality, $\delta = \cf{\delta} \ge \kappa (D)$. By definition of $\kappa (D)$, there exists $A_0 \subseteq |M_\delta|$ such that $|A_0| < \kappa (D)$ and $p$ does not strongly split over $A_0$. By cofinality consideration, there exists $i < \delta$ such that $A_0 \subseteq |M_i|$. By the moreover part of extension-existence, for all $j \in [i, \delta)$, $p \rest M_j$ does not fork over $A_0$. By \cite[3.5.(i)]{hs-independence}, it follows that $p$ does not fork over $M_i$, and therefore by transitivity over $A_0$.
\end{proof}

We will use the machinery of indiscernibles and averages. Note that by \cite[3.4, 3.12]{grle-homog}, indiscernible sequences are indiscernible sets under stability. We will use this freely. The following directly follows from the definition of strong splitting:

\begin{fact}[5.3 in \cite{grle-homog}]\label{indisc-fact}
  Assume that $D$ is stable. For all infinite indiscernible sequences $I$ over a set $A$ and all elements $b$, there exists $J \subseteq I$ with $|J| < \kappa (D)$ such that $I \backslash J$ is indiscernible over $A \cup \{b\}$.
\end{fact}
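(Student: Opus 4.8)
The plan is to read the conclusion off directly from the definition of strong splitting together with the characterization of $\kappa (D)$. First I would consider the type $p := \tp (b / A \cup I)$, which lies in $S_D^{<\omega} (A \cup I)$ since $b$ is a finite tuple. By the definition of $\kappa (D)$, there is a set $B \subseteq A \cup I$ with $|B| < \kappa (D)$ over which $p$ does not strongly split. I then set $J := I \cap B$; this is the candidate exceptional set, and $J \subseteq I$ with $|J| \le |B| < \kappa (D)$. If $|I| < \kappa (D)$ I would simply take $J = I$, so that $I \backslash J = \emptyset$ is vacuously indiscernible over everything; hence I may assume $|I| \ge \kappa (D)$, which together with $|J| < \kappa (D)$ forces $I \backslash J$ to be infinite.

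The next step is a purely combinatorial observation about indiscernible sets. Since $D$ is stable, $I$ is an indiscernible \emph{set} over $A$, and for a subset $J \subseteq I$ this makes $I \backslash J$ an indiscernible set over $A \cup J$, hence over $B \subseteq A \cup J$. Concretely, for same-length tuples $\bc, \bd$ of distinct elements of $I \backslash J$ and any finite $\bar{j}$ from $J$, the concatenations $\bc^\frown \bar{j}$ and $\bd^\frown \bar{j}$ are tuples of distinct elements of $I$ of equal length, so by set-indiscernibility over $A$ they have the same type over $A$; varying $\bar{j}$ gives $\tp (\bc / A \cup J) = \tp (\bd / A \cup J)$.

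Now I would argue by contradiction. If $I \backslash J$ failed to be indiscernible over $A \cup \{b\}$, then — using again that it is an indiscernible set over $A$ — there would be same-length tuples $\bc, \bd$ of distinct elements of $I \backslash J$ and a formula $\phi (\bx, \by)$ over $A$ with $\models \phi (b, \bc)$ and $\models \neg \phi (b, \bd)$. Choosing a further tuple $\bc'$ of distinct elements of $I \backslash J$ disjoint from both $\bc$ and $\bd$ (possible as $I \backslash J$ is infinite) and testing whether $\phi (b, \bc')$ holds, I may after renaming assume $\bc$ and $\bd$ are disjoint while keeping $\phi (b, \bc) \wedge \neg \phi (b, \bd)$. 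Extending $\langle \bc, \bd \rangle$ by further pairwise-disjoint tuples of distinct elements of $I \backslash J$ of the common length of $\bc$ and $\bd$ produces a sequence $\langle \ba_i : i < \omega \rangle$ that is indiscernible over $B$: each block is a tuple of distinct elements and the blocks are pairwise disjoint, so any increasing selection is again a tuple of distinct elements of the indiscernible set $I \backslash J$, of constant type over $B$. This sequence, together with $\phi$, witnesses that $p$ strongly splits over $B$, since $\phi (\bx, \ba_0) = \phi (\bx, \bc) \in p$ while $\neg \phi (\bx, \ba_1) = \neg \phi (\bx, \bd) \in p$, contradicting the choice of $B$.

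The only real subtlety I anticipate is the disjointness bookkeeping in the last step: strong splitting is phrased in terms of a genuine indiscernible \emph{sequence}, so I must manufacture one whose first two blocks carry opposite $\phi (b, -)$-values, which is exactly what forces the preliminary replacement of $\bc$ or $\bd$ by a disjoint copy and the appeal to the fact that an $\omega$-sequence of pairwise-disjoint distinct-element blocks from an indiscernible set is an indiscernible sequence. Everything else is a direct unwinding of the definitions of strong splitting and $\kappa (D)$, which is presumably why the statement is advertised as following directly from the definition of strong splitting.
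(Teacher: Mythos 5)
Your proof is correct and is exactly the argument the paper intends: the statement is only cited here (as 5.3 of \cite{grle-homog}) with the remark that it ``directly follows from the definition of strong splitting,'' and your unwinding --- take $p = \tp(b / A \cup I)$, pick a base $B \subseteq A \cup I$ of size $< \kappa(D)$ over which $p$ does not strongly split, set $J := I \cap B$, and turn a failure of indiscernibility of $I \setminus J$ over $A \cup \{b\}$ into a strong-splitting witness over $B$ by passing to disjoint blocks --- is precisely that. The one point worth making explicit is that your blocks are indiscernible over all of $A \cup J$ (not merely over $B$), which is what lets you absorb the $A$-parameters of $\phi$ into the tuples $\ba_i$ so that the witness matches the parameter-free format of the strong-splitting definition.
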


\begin{defin}
  For $I$ an indiscernible sequence of cardinality at least $\kappa (D)$, let $\Av (I / A)$ be the set of formulas $\phi (\bx, \ba)$ with $\ba \in \fct{<\omega}{A}$ such that for at least $\kappa (D)$-many elements $\bb$ of $I$, $\models \phi[\bb, \ba]$.
\end{defin}

\begin{fact}[5.5 in \cite{grle-homog}]
  If $D$ is stable and $I$ is an indiscernible sequence of cardinality at least $\kappa (D)$, then $\Av (I / A) \in S_D^{<\omega} (A)$.
\end{fact}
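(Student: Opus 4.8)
The plan is to show directly that $\Av (I / A)$ is a complete type over $A$ which is finitely satisfiable by tuples from $I$, and then to realize it as a $D$-type inside $\sea$. The engine of the argument is Fact \ref{indisc-fact}, which lets us absorb any finite set of parameters into the base of indiscernibility at the cost of discarding fewer than $\kappa (D)$ elements of $I$; the $\kappa (D)$-threshold in the definition of the average is exactly calibrated to make this work.

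First I would prove completeness: for every formula $\phi (\bx, \ba)$ with $\ba \in \fct{<\omega}{A}$, exactly one of $\phi (\bx, \ba)$ and $\neg \phi (\bx, \ba)$ lies in $\Av (I / A)$. Applying Fact \ref{indisc-fact} once for each of the finitely many coordinates of $\ba$ (each application removes a set of size $< \kappa (D)$, and $\kappa (D)$ is infinite, so the union $J$ of the removed sets still has $|J| < \kappa (D)$), I obtain $J \subseteq I$ with $I \backslash J$ indiscernible over $\ba$. Then for all $\bb, \bb' \in I \backslash J$ we have $\models \phi[\bb, \ba]$ iff $\models \phi[\bb', \ba]$, so either every element of $I \backslash J$ satisfies $\phi (\bx, \ba)$ or every element satisfies $\neg \phi (\bx, \ba)$. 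Since $|I| \geq \kappa (D) > |J|$, cardinal arithmetic gives $|I \backslash J| \geq \kappa (D)$, so the ``majority'' formula has at least $\kappa (D)$ witnesses in $I$ and hence belongs to $\Av (I / A)$, while the other formula has all its witnesses inside $J$, hence fewer than $\kappa (D)$, and does not belong.

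Next I would verify finite satisfiability. Given $\phi_1 (\bx, \ba_1), \dots, \phi_n (\bx, \ba_n) \in \Av (I / A)$, the completeness step shows that for each $i$ the set of $\bb \in I$ with $\models \neg \phi_i[\bb, \ba_i]$ has size $< \kappa (D)$; taking the finite union, fewer than $\kappa (D)$ elements of $I$ fail some $\phi_i$. As $|I| \geq \kappa (D)$, some $\bb \in I$ satisfies all of them. Thus $\Av (I / A)$ is finitely satisfiable by tuples from $I$, and combined with the previous step it is a complete, consistent type over $A$.

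Finally, since $\sea$ is $(D, \bkappa)$-homogeneous and every tuple of $I$ is a $D$-tuple, each finite subset of $\Av (I / A)$ is realized in $\sea$ by a $D$-tuple; hence $\Av (I / A)$ is a $D$-type, and by the saturation of $\sea$ (applied to $A$, which has size $< \bkappa$) it is realized in $\sea$ by a finite tuple. Therefore $\Av (I / A) \in S_D^{<\omega} (A)$. I expect the first step to be the main obstacle: carefully upgrading the single-element version of Fact \ref{indisc-fact} to a finite parameter tuple while keeping the discarded set of size $< \kappa (D)$, so that the $\kappa (D)$-threshold cleanly separates $\phi$ from $\neg \phi$ and forces completeness.
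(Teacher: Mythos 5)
The paper itself gives no proof of this statement (it is quoted from \cite[5.5]{grle-homog}), so I am judging your argument on its own merits. Your first two steps are correct and are the standard ones: iterating Fact \ref{indisc-fact} over the finitely many coordinates of $\ba$ produces $J \subseteq I$ with $|J| < \kappa (D)$ and $I \setminus J$ indiscernible over $\ba$; since $\kappa (D)$ is infinite, $|I \setminus J| \ge \kappa (D)$, so exactly one of $\phi (\bx, \ba)$, $\neg \phi (\bx, \ba)$ meets the $\kappa (D)$-threshold, and finite satisfiability by tuples of $I$ follows.

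The gap is in your final step, which you treat as routine but which is where the nonelementary content of the statement actually lives. The inference ``each finite subset of $\Av (I / A)$ is realized in $\sea$ by a $D$-tuple, hence $\Av (I / A)$ is a $D$-type'' is precisely the compactness principle that \emph{fails} for finite diagrams: a complete type can be finitely satisfiable in a $D$-model without being realized in any $D$-model (otherwise $S_D^{<\omega} (A)$ would coincide with the full first-order type space and the framework would collapse to the elementary case). Likewise, the $(D, \bar{\kappa})$-homogeneity of $\sea$ only realizes types already known to lie in $S_D^{<\omega} (A; N)$ for some $N \succeq \sea$, so invoking it here presupposes the conclusion. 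The correct bridge is in fact already contained in your first step: for each fixed $\ba \in \fct{<\omega}{A}$, every element of $I \setminus J_{\ba}$ realizes the \emph{entire} restriction $\Av (I / A) \rest \ba$, not merely a finite fragment of it. Hence if $\bc$ realizes $\Av (I / A)$ in an elementary extension of $\sea$ (such a $\bc$ exists by your finite satisfiability step), then $\tp (\bc \ba / \emptyset) = \tp (\bb \ba / \emptyset)$ for any $\bb \in I \setminus J_{\ba}$, which is a tuple of the $D$-model $\sea$ and so has its type in $D$; therefore $A \cup \ran{\bc}$ is a $D$-set, and since $D$ is homogeneous every $D$-set extends to a $D$-model, giving $\Av (I / A) \in S_D^{<\omega} (A)$. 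So all the ingredients are present in your write-up, but the step you flagged as the main obstacle (upgrading Fact \ref{indisc-fact} to finite tuples) is the easy one, and the step you waved through is the one that needs the argument.
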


\begin{fact}\label{indisc-existence}
  Assume that $D$ is stable.
  
  Let $A \subseteq B$ and let $p \in S_D^{<\omega} (B)$. If $p$ does not fork over $A$, $|A| < \kappa (D)$, and $p$ is nonalgebraic, then there exists an indiscernible set $I$ over $A$ with $|I| \ge \kappa (D)$ such that $\Av (I / M) = p$.
\end{fact}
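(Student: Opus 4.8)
The plan is to realize $p$ by a Morley sequence and to observe that the average of such a sequence is forced to be $p$, the nonforking hypothesis being exactly what guarantees that the construction can be based on $A$. First I would fix a $(D, \mu)$-homogeneous model $M^* \supseteq B$ with $D$ stable in $\mu := |M^*|$; such an $M^*$ exists inside $\sea$ since, by Fact \ref{stab-spectrum}, $D$ is stable in a proper class of cardinals $\mu = \mu^{<\kappa(D)} \ge |B|$. Using the defining property of nonforking (Definition \ref{forking-def}) with $C = |M^*|$, I extend $p$ to $p^* \in S_D^{<\omega}(M^*)$ not strongly splitting over some $A_0 \subseteq A$ with $|A_0| < \kappa(D)$; by the moreover clause of extension-existence in Fact \ref{forking-facts}, $p^*$ does not fork over $A_0$, hence over $A$, and $p^* \rest B = p$.

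Before building the sequence I would check that $p^*$ is nonalgebraic. Since $A \subseteq B$, an extension of an algebraic type is algebraic, so $p$ nonalgebraic forces $p^* \rest A = p \rest A$ nonalgebraic; and a nonforking extension of a nonalgebraic type is again nonalgebraic (a standard consequence of the forking calculus here), so $p^*$ is nonalgebraic. Now I build the Morley sequence $I = \seq{a_i : i < \kappa(D)}$ inside $\sea$: given $\seq{a_j : j < i}$, I let $a_i$ realize the nonforking extension of $p^*$ to $M^* \cup \{a_j : j < i\}$ supplied by extension-existence (Fact \ref{forking-facts}), which by the ``algebraic iff'' part of that same fact is nonalgebraic. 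Hence the $a_i$ may be chosen distinct and $|I| = \kappa(D)$, as required.

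The main obstacle I expect is the indiscernibility of $I$ together with the need to keep the base of nonforking equal to the small set $A$. Each $\tp(a_i / M^* \cup \{a_j : j<i\})$ is, by uniqueness (Fact \ref{forking-facts}), the \emph{unique} nonforking extension of $p^*$; an induction on $n$ then shows that for increasing indices $i_0 < \cdots < i_{n-1}$ the type $\tp(a_{i_0} \cdots a_{i_{n-1}} / M^*)$ depends only on $n$, so $I$ is an indiscernible sequence over $M^*$. The point of routing everything through $M^*$ is precisely that uniqueness of nonforking extensions is only available over the homogeneous model, not over $A$; but indiscernibility over $M^* \supseteq A$ trivially yields indiscernibility over $A$, which is what is claimed. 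By the remark preceding Fact \ref{indisc-fact}, under stability $I$ is then an indiscernible \emph{set}.

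Finally the average is immediate. Every $a_i$ realizes $p^* \rest B = p$, so for any formula $\phi(\bx, \bb)$ with $\bb \in B$ either $\phi \in p$, in which case all $\kappa(D)$ elements of $I$ satisfy it, or $\neg \phi \in p$, in which case none do. Thus $\Av(I / B) = p$, completing the argument.
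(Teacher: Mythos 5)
Your proposal is correct, but it takes a genuinely different route from the paper. The paper disposes of this fact in two lines: it cites \cite[3.9]{hs-independence} directly and only verifies the one hypothesis of that result that is not immediate, namely that $p \rest A$ has unboundedly many realizations (using extension-existence and nonalgebraicity of $p$). You instead reprove the cited lemma from scratch by the classical Morley-sequence construction: pass to a large $(D,\mu)$-homogeneous $M^* \supseteq B$, extend $p$ to a nonforking $p^* \in S_D^{<\omega}(M^*)$, iterate extension-existence to build $\seq{a_i : i < \kappa(D)}$, and use uniqueness of nonforking extensions over the homogeneous model (plus homogeneity of $\sea$ to realize type equalities by automorphisms) to get order-indiscernibility, which stability upgrades to set-indiscernibility; the computation $\Av(I/B)=p$ is then immediate since every $a_i$ realizes $p^* \rest B = p$. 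Your approach buys a self-contained argument that makes explicit which clauses of Fact \ref{forking-facts} carry the weight, and it even yields slightly more (indiscernibility over all of $M^*$, with no real use of $|A| < \kappa (D)$); the paper's approach buys brevity at the cost of an external dependency. The one soft spot in your write-up is the claim that $p^*$ is nonalgebraic, which you need so that the $a_i$ come out distinct: your justification (``an extension of an algebraic type is algebraic'' plus ``nonforking extensions preserve nonalgebraicity'') is stated for types over sets, whereas Fact \ref{forking-facts}(2) only gives the ``algebraic iff'' clause for extensions from a homogeneous model. This is exactly the point the paper isolates as ``check that $p \rest A$ has unboundedly many realizations,'' and it deserves the same sentence of care in your version; it is a presentational gap rather than a mathematical one, since the needed unboundedness does follow from nonalgebraicity of $p$ together with the forking calculus over $M^*$.
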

\begin{proof}
  This follows from \cite[3.9]{hs-independence}. We have to check that $p \rest A$ has unboundedly-many realizations, but this is easy using the extension-existence property of forking (Fact \ref{forking-facts}) and the assumption that $p$ is nonalgebraic.
\end{proof}

We can conclude:

\begin{thm}\label{chainsat}
  Let $\lambda \ge |T|$. Assume that $D$ is stable in some $\mu \le \lambda$. Let $\delta$ be a limit ordinal with $\cf{\delta} \ge \kappa (D)$ and let $\seq{M_i : i < \delta}$ be an increasing sequence of $(D, \lambda)$-homogeneous models. Then $\bigcup_{i < \delta} M_i$ is $(D, \lambda)$-homogeneous.
\end{thm}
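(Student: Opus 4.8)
The plan is to verify the definition of $(D,\lambda)$-homogeneity directly for $M_\delta := \bigcup_{i<\delta} M_i$. That $M_\delta$ is a $D$-model is immediate, since $\K_D$ is an AEC (Proposition \ref{kd-aec}) and hence closed under unions of $\preceq$-increasing chains. So I fix $A \subseteq |M_\delta|$ with $|A| < \lambda$, some $N \succeq M_\delta$, and $p \in S_D^{<\omega}(A; N)$, and I must realize $p$ in $M_\delta$. If $p$ is algebraic, a formula in $p$ with finitely many solutions has, by elementarity of $M_\delta \preceq N$ and because its parameters lie in $M_\delta$, the same solutions in $M_\delta$ as in $N$, so any realization of $p$ already lies in $M_\delta$; thus I may assume $p$ is nonalgebraic. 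Note also that since $D$ is stable, Fact \ref{stab-spectrum} gives $\kappa(D) < \infty$, and the equation $\mu = \mu^{<\kappa(D)}$ forces $\kappa(D) \le \mu \le \lambda$; in particular each $M_i$ is $a$-saturated and the forking calculus of Fact \ref{forking-facts} is available over it.

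Next I would localize the type. Pick $\bar c \models p$ in $N$ and set $q := \tp(\bar c / M_\delta)$, so $q$ extends $p$ and is nonalgebraic. Applying the chain form of local character (the moreover clause of Fact \ref{forking-facts}(6)), which is exactly where the hypothesis $\cf{\delta} \ge \kappa(D)$ is used, I obtain some $i<\delta$ and some $A_0 \subseteq |M_i|$ with $|A_0| < \kappa(D)$ such that $q$ — and hence $p$, by monotonicity — does not fork over $A_0$. There is then an easy case: if $A$ is contained in a single $M_j$ (automatic when $|A| < \cf{\delta}$), then $p$ is a type over a subset of size $< \lambda$ of the $(D,\lambda)$-homogeneous model $M_j$, so it is realized in $M_j \subseteq M_\delta$ and I am done. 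The substance lies in the remaining case, where $A$ is spread cofinally through the chain.

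For that case I would pass to averages. Using Fact \ref{indisc-existence} with base $A_0$ I fix an indiscernible set with average $q$, and then transfer it into $M_\delta$, building elements one at a time and realizing, at each stage, the relevant nonforking type over $A_0$ together with the portion of $A$ already encountered lower in the chain — each such type being over a set of size $< \lambda$ inside some $M_j$, hence realized there by $(D,\lambda)$-homogeneity. The target is an indiscernible set $I \subseteq |M_\delta|$ with $\Av(I/A) = p$ and $|I|$ chosen larger than $|A| + |T| + \kappa(D)$. Granting this, Fact \ref{indisc-fact} shows that for each formula $\phi(\bx, \ba) \in p$ all but fewer than $\kappa(D)$ elements of $I$ satisfy $\phi$; since there are at most $|A| + |T|$ such formulas, the set of elements of $I$ failing some formula of $p$ has size at most $(|A| + |T|) \cdot \kappa(D) < |I|$, so some element of $I \subseteq |M_\delta|$ realizes all of $p$, as required.

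The main obstacle is the construction of $I$: there is a genuine tension between its two required properties. Keeping $I$ inside the chain forces me to realize its elements over small bases (the only types the $M_j$ are guaranteed to realize), whereas pinning the average down to the prescribed $p$ over the cofinally-spread set $A$ wants a stationary, i.e.\ model, base, over which the nonforking extension is unique (the uniqueness in Fact \ref{forking-facts}(3) is stated over homogeneous bases). Threading this needle is the crux: I expect to fix a small $a$-saturated $M' \preceq M_i$ with $A_0 \subseteq |M'|$ and $|M'| < \lambda$, build $I$ as a nonforking (Morley) sequence for $q$ over $M'$ realized inside the models of the chain, and invoke uniqueness of the nonforking extension of $q \rest M'$ to conclude $\Av(I / M_\delta) = q$, whence $\Av(I/A) = p$. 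Making the cardinal bookkeeping consistent — that such an $M'$ of size $< \lambda$ exists, and that $|I|$ can simultaneously exceed $|A| + |T| + \kappa(D)$ while each of its initial segments stays of size $< \lambda$, so the construction remains inside the $(D,\lambda)$-homogeneous $M_j$ — is the delicate part of the argument.
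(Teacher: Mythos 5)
Your overall skeleton is the paper's: reduce to a nonalgebraic type, use the chain form of local character (Fact \ref{forking-facts}(6)) to find $i<\delta$ and a base of size $<\kappa(D)$ inside $M_i$, represent the type as the average of a long indiscernible set sitting in $M_\delta$, and use Fact \ref{indisc-fact} once per parameter of $A$ to find an element of the set realizing $p$. The final counting step and the treatment of the algebraic case are fine. But the step you yourself flag as the crux --- actually producing the indiscernible set inside $M_\delta$ with the prescribed average --- is not carried out, and the route you sketch for it does not work as stated. First, a ``small $a$-saturated $M'\lee M_i$ with $|M'|<\lambda$'' need not exist: the theorem allows $\lambda=|T|$, and more generally $\lambda$ may be no larger than the least cardinal in which an $a$-saturated model can be found; moreover the uniqueness you want to invoke (Fact \ref{forking-facts}(3)) is stated over $(D,\lambda)$-homogeneous bases, not over an arbitrary small $a$-saturated $M'$. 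Second, interleaving ``portions of $A$'' into the construction of $I$ is exactly the tension you describe, and it is unnecessary.

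The paper dissolves the tension as follows. First reduce without loss of generality to $\delta=\cf{\delta}$, $\lambda>\delta$ and $\lambda$ regular, and enlarge $A$ so that the local-character base $B\subseteq |M_i|$ (with $|B|<\kappa(D)$) satisfies $B\subseteq A$. Supposing $p$ is not realized in $M_\delta$, the extension $q\in S_D^{<\omega}(M_\delta)$ is nonalgebraic, so Fact \ref{indisc-existence}, applied over the \emph{set} $B$ (no model base is needed: the fact only requires $|B|<\kappa(D)$ and nonforking over $B$), already yields an indiscernible set $I$ over $B$ with $\Av(I/M_\delta)=q$; this is where the ``stationarity'' you were worried about is hidden. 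One may take $|I|=\lambda$ and, since every proper initial segment of $I$ together with $B$ has size $<\lambda$ ($\lambda$ regular), realize all of $I$ inside the single model $M_{i+1}$ by $(D,\lambda)$-homogeneity --- not spread through the chain. Only \emph{after} that does one apply Fact \ref{indisc-fact} $|A|$-many times to extract $I_0\subseteq I$ of size $\lambda$ indiscernible over $A$; then $\Av(I_0/A)=p$ and indiscernibility over $A$ forces every element of $I_0\subseteq |M_{i+1}|\subseteq |M_\delta|$ to realize $p$, a contradiction. In short: realize the whole set first over the small base $B$ in one link of the chain, and refine to indiscernibility over $A$ afterwards, rather than trying to control $\Av(I/A)$ during the construction.
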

\begin{proof}
  By cofinality consideration, we can assume without loss of generality that $\delta = \cf{\delta}$ and $\lambda > \delta$. Also without loss of generality, $\lambda$ is regular. Let $M_\delta := \bigcup_{i < \delta} M_i$. Let $A \subseteq |M_\delta|$ have size less than $\lambda$ and let $p \in S_D^{<\omega} (A)$. Let $q \in S_D^{<\omega} (M_\delta)$ be an extension of $p$ and assume for sake of contradiction that $p$ is not realized in $M_\delta$. By the moreover part of local character (Fact \ref{forking-facts}), there exists $i < \delta$ and $B \subseteq |M_i|$ such that $|B| < \kappa (D)$ and $q$ does not fork over $B$. By making $A$ slightly bigger we can assume without loss of generality that $B \subseteq A$.

  Since $p$ is not realized in $M_\delta$, $q$ is nonalgebraic. By Fact \ref{indisc-existence}, there exists an indiscernible set $I$ over $B$ with $\Av (I / M_\delta) = q$. Enlarging $I$ if necessary, $|I| = \lambda$. Since $M_{i + 1}$ is $(D, \lambda)$-homogeneous, we can assume without loss of generality that $I \subseteq |M_{i + 1}|$. By Fact \ref{indisc-fact} used $|A|$-many times (recall $|A| < \lambda$), there exists $I_0 \subseteq I$ with $|I_0| = \lambda$ and $I_0$ indiscernible over $A$. Then $\Av (I_0 / M_\delta) = \Av (I / M_\delta) = q$ so $p = \Av (I_0 / A)$. By definition of average, if $\phi (\bx, \ba) \in p$, there exists $\bb \in I_0$ such that $\models \phi[\bb, \ba]$. By indiscernibility over $A$, this is true for any $\bb \in I_0$, hence any element of $I_0$ realizes $p$.
\end{proof}
\begin{remark}
  When $\lambda > |T|$ and $\kappa (D) = \aleph_0$, Theorem \ref{chainsat} generalizes to superstable tame AECs with amalgamation (see \cite{bv-sat-v3} and the more recent \cite[6.10]{vv-symmetry-transfer-v3}). We do not know whether there is a generalization of Theorem \ref{chainsat} to AECs when $\lambda = \LS (\K)$ (see also \cite[Question 6.12]{vv-symmetry-transfer-v3}).
\end{remark}

In homogeneous model theory, superstability follows from categoricity:

\begin{lem}\label{categ-superstab}
If a homogeneous diagram $D$ in a first-order theory $T$ is categorical in a $\lambda > |T|$, then $\kappa (D) = \aleph_0$.
\end{lem}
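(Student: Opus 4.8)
The plan is to deduce superstability from categoricity by routing through the stability spectrum, so that the combinatorial heart of the matter is a one-line cardinal-arithmetic computation. First I would observe, via Proposition \ref{kd-aec}, that $\K_D$ is an $\LS(\K_D)$-tame AEC with amalgamation and no maximal models, that $\LS(\K_D) = |T|$, and that Galois types coincide with syntactic types in this class (so that AEC-stability of $\K_D$ and stability of $D$ are the same notion). Since $D$ is categorical in $\lambda > |T| = \LS(\K_D)$, Fact \ref{good-frame-fact}(1) applies and yields that $\K_D$, equivalently $D$, is stable in \emph{every} cardinal. This is the only place where categoricity is used.

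Next I would feed this stability into the stability spectrum. Since $D$ is in particular stable in $|T| \ge |T|$, Fact \ref{stab-spectrum} gives $\kappa(D) < \infty$ together with the characterization that for every $\chi \ge |T|$, $D$ is stable in $\chi$ if and only if $\chi = \chi^{<\kappa(D)}$.

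The core of the argument is then a short contradiction ruling out $\kappa(D) > \aleph_0$. Suppose $\kappa(D) \ge \aleph_1$ and choose a cardinal $\chi > |T|$ with $\cf{\chi} = \aleph_0$ (for instance $\chi = |T|^{+\omega}$). By König's theorem $\chi < \chi^{\cf{\chi}} = \chi^{\aleph_0}$, and since $\aleph_0 < \kappa(D)$ we have $\chi^{\aleph_0} \le \chi^{<\kappa(D)}$; hence $\chi \ne \chi^{<\kappa(D)}$, so by Fact \ref{stab-spectrum} $D$ is not stable in $\chi$, contradicting stability in every cardinal. Combined with the standard baseline $\kappa(D) \ge \aleph_0$ (types can strongly split over arbitrarily large finite sets), this forces $\kappa(D) = \aleph_0$.

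I do not expect a genuine obstacle here: the lemma is essentially bookkeeping that combines Facts \ref{good-frame-fact} and \ref{stab-spectrum}. The only points requiring care are (i) justifying that the stability handed down by Fact \ref{good-frame-fact} is the same notion of stability of $D$ appearing in Fact \ref{stab-spectrum}, which is handled by the coincidence of Galois and syntactic types from Proposition \ref{kd-aec}; and (ii) selecting a witnessing cardinal of countable cofinality so that König's inequality produces the failure of stability at exactly the value $\kappa(D) > \aleph_0$.
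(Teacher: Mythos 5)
Your proof is correct and is essentially identical to the paper's: the paper also deduces stability in every cardinal from Fact \ref{good-frame-fact} and then applies the stability spectrum theorem (Fact \ref{stab-spectrum}) at $\mu := \aleph_\omega(|T|)$, which is exactly your witness $|T|^{+\omega}$ of countable cofinality, using $\mu^{\aleph_0} > \mu$ to force $\kappa(D) = \aleph_0$.
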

\begin{proof}
  By Fact \ref{good-frame-fact} (applied to $\K := \K_D$, recall Proposition \ref{kd-aec}), $D$ is stable in all cardinals and in particular in $\mu := \aleph_\omega (|T|)$. Since $\mu^{\aleph_0} > \mu$, Fact \ref{stab-spectrum} gives $\kappa (D) = \aleph_0$.
\end{proof}

Note that Lemma \ref{categ-superstab} was known when $\lambda \neq \aleph_\omega (|T|)$ (see \cite[Theorem 3]{hyt-categ-homog}). The case $\lambda = \aleph_\omega (|T|)$ is new (in fact, once Lemma \ref{categ-superstab} is proven for $\lambda = \aleph_\omega (|T|)$, Hyttinen's argument for transferring categoricity \cite[14.(ii)]{hyt-categ-homog} goes through).

The referee asked if Lemma \ref{categ-superstab} had an easier proof using tools specific to homogeneous model theory. An easy proof of Lemma \ref{categ-superstab} when $\lambda \neq \aleph_{\omega} (|T|)$ runs as follows: By a standard Ehrenfeucht-Mostowski (EM) model argument of Morley (see for example \cite[8.21]{baldwinbook09}), $D$ is stable in every $\mu \in [|T|, \lambda)$. If $\lambda > \aleph_\omega (|T|)$, then $D$ is stable in $\mu := \aleph_\omega (|T|)$ and $\mu^{\aleph_0} > \mu$ so by the stability spectrum theorem (Fact \ref{stab-spectrum}), we must have that $\kappa (D) = \aleph_0$. If $\lambda < \aleph_\omega (|T|)$, $\lambda$ is a successor and we can use other EM model tricks. Only the case $\lambda = \aleph_\omega (|T|)$ remains but to deal with it, we are not aware of any tools specific to the homogeneous setup. The proof above is in effect an application of a result of Shelah and Villaveces (see \cite[2.2.1]{shvi635} and the recent exposition \cite{shvi-notes-v3-toappear}) and an upward stability transfer of the author \cite[5.6]{ss-tame-jsl}.

We can conclude with a proof of Theorem \ref{abstract-thm-2} from the abstract. When $\lambda = |T|^+$, we could appeal to \cite{tamenessthree} but prefer to prove a more general statement using primes: 

\begin{thm}\label{homog-precise}
  If a homogeneous diagram $D$ in a first-order theory $T$ is categorical in a $\lambda > |T|$, then the class $\Ksatp{|T|}_D$ of its $(D,|T|)$-homogeneous models is categorical in all $\lambda' \ge |T|$. In particular, if $D$ is also categorical in $|T|$, then $D$ is categorical in all $\lambda' \ge |T|$
\end{thm}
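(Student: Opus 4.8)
The plan is to build a categorical type-full good $|T|$-frame directly on the class $\Ksatp{|T|}_D$ of $(D, |T|)$-homogeneous models and then feed this frame into the categoricity transfer machinery of Theorem \ref{unidim-equiv}. The delicate point is that $|T| = \LS (\K_D)$, so one cannot simply invoke Fact \ref{good-frame-fact} or Lemma \ref{upward-transfer-lem} (both of which require the frame level to lie strictly above $\LS (\K)$); instead the homogeneous forking calculus must be assembled by hand at the Löwenheim-Skolem number, which is exactly what the development culminating in Theorem \ref{chainsat} makes possible.

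First I would extract the structural consequences of categoricity. By Lemma \ref{categ-superstab}, categoricity in $\lambda > |T|$ forces $\kappa (D) = \aleph_0$, and by Fact \ref{good-frame-fact} (applied to $\K_D$, recall Proposition \ref{kd-aec}) $D$ is stable in every cardinal, in particular in $|T|$. Since $\kappa (D) = \aleph_0$, every limit ordinal has cofinality $\ge \kappa (D)$, so Theorem \ref{chainsat} shows that $\Ksatp{|T|}_D$ is closed under unions of increasing chains; together with downward Löwenheim-Skolem and the amalgamation and no-maximal-models properties inherited from $\K_D$, this makes $\Ksatp{|T|}_D$ an AEC with $\LS (\Ksatp{|T|}_D) = |T|$, categorical in $|T|$ by uniqueness of saturated models. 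Moreover $\Ksatp{|T|}_D$ is $|T|$-tame (inherited from Proposition \ref{kd-aec}), and by Fact \ref{homog-primes} (with $\chi = |T|$, using stability in $|T|$) it has primes.

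Next I would verify that the nonforking relation of Fact \ref{forking-facts} restricts to a type-full good $|T|$-frame on $\Ksatp{|T|}_{D, |T|}$: invariance, monotonicity, extension-existence, uniqueness, symmetry, transitivity, and local character are precisely the clauses of Fact \ref{forking-facts}, while the continuity and superlimit requirements of a good frame are supplied by Theorem \ref{chainsat}. The resulting frame is categorical because $\Ksatp{|T|}_{D, |T|}$ is. With a categorical good $|T|$-frame in hand, and noting that $\Ksatp{|T|}_D$ is also categorical in $\lambda > |T|$ (the unique $D$-model of size $\lambda$ is saturated, hence $(D,|T|)$-homogeneous), I would apply Theorem \ref{unidim-equiv} with its $\lambda$ taken to be $|T|$: since $\Ksatp{|T|}_D$ is categorical in some $\mu > |T|$, clause (3) holds, hence clause (2) gives categoricity in all $\mu > |T|$, and combined with categoricity in $|T|$ this yields categoricity of $\Ksatp{|T|}_D$ in all $\lambda' \ge |T|$.

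The main obstacle is precisely this frame construction at the Löwenheim-Skolem level, where the bespoke forking calculus (Facts \ref{forking-facts} and \ref{indisc-existence}) together with the chain-closure Theorem \ref{chainsat} do the work that general AEC theory cannot yet do. For the ``in particular'' clause, if $D$ is moreover categorical in $|T|$, then the unique $D$-model of size $|T|$ is, by stability and $\kappa (D) = \aleph_0$, the $(D, |T|)$-homogeneous one; by downward Löwenheim-Skolem every $D$-model of size $\ge |T|$ contains, over each subset of size $< |T|$, a $(D, |T|)$-homogeneous elementary submodel of size $|T|$ realizing the relevant type, so every $D$-model is $(D, |T|)$-homogeneous. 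Thus $\K_D = \Ksatp{|T|}_D$, and the categoricity of $\Ksatp{|T|}_D$ established above transfers verbatim to $D$.
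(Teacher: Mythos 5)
Your proof is correct and follows essentially the same route as the paper's: Lemma \ref{categ-superstab} and Theorem \ref{chainsat} to make $\Ksatp{|T|}_D$ a tame AEC categorical in both $|T|$ and $\lambda$, Facts \ref{forking-facts} and \ref{homog-primes} to produce a categorical type-full good $|T|$-frame with primes at the L\"owenheim-Skolem level, and Theorem \ref{unidim-equiv} to conclude. The only difference is that you spell out details the paper leaves implicit (e.g.\ why $\Ksatp{|T|}_D$ is categorical in $\lambda$ and the derivation of the ``in particular'' clause from uniqueness of the $(D,|T|)$-homogeneous model).
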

\begin{proof}
  Let $\K := \K_D$ be the class of $D$-models of $T$. By Proposition \ref{kd-aec}, $\K$ is a $\LS (\K)$-tame AEC (where $\LS (\K) = |T|$) with amalgamation and no maximal models. Furthermore $\K$ is categorical in $\lambda$. By Lemma \ref{categ-superstab}, $\kappa (D) = \aleph_0$. By Theorem \ref{chainsat}, the union of any increasing chain of $(D, |T|)$-homogeneous models is $(D, |T|)$-homogeneous. Moreover, there is a unique $(D, |T|)$-homogeneous model of cardinality $|T|$ (see e.g.\ \cite[5.9]{grle-homog}). So we get that:

  \begin{enumerate}
    \item $\Ksatp{|T|}_D$ is an AEC with $\LS (\Ksatp{|T|}_D) = \LS (\K)$.
    \item $\Ksatp{|T|}_D$ has amalgamation, no maximal models, and is $\LS (\K)$-tame.
    \item $\Ksatp{|T|}_D$ is categorical in $\LS (\K)$ and $\lambda$.
  \end{enumerate}

  Thus the last sentence in the statement of the theorem follows from uniqueness of homogeneous models. Let us prove the first. By Fact \ref{forking-facts}, nonforking induces a type-full good $|T|$-frame on the class $(\Ksatp{|T|}_D)_{|T|}$. By Fact \ref{homog-primes}, $\Ksatp{|T|}_D$ has primes. Now apply Theorem \ref{unidim-equiv}.
\end{proof}

\begin{thm}\label{abstract-thm-2-proof}
  If a homogeneous diagram $D$ in a first-order theory $T$ is categorical in a $\lambda > |T|$, then it is categorical in all $\lambda' \ge \min (\lambda, \hanf{|T|})$.
\end{thm}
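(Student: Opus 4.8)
The plan is to dispose of all cases except the boundary value $\lambda = |T|^+$ immediately, and then treat that boundary case by hand. Indeed, when $\lambda > |T|^+$ the statement is precisely Theorem \ref{abstract-thm-2-proof-0}, so nothing new is required. Since $|T|^+ < \hanf{|T|}$, we have $\min(\lambda, \hanf{|T|}) = |T|^+$ when $\lambda = |T|^+$, so in that case the goal reduces to: categoricity of $D$ in $|T|^+$ implies categoricity of $D$ in every $\lambda' \ge |T|^+$.

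For the boundary case I would run everything through Theorem \ref{homog-precise}, which already gives --- from categoricity of $D$ in any $\lambda > |T|$ --- that the subclass $\Ksatp{|T|}_D$ of $(D,|T|)$-homogeneous models is categorical in all $\lambda' \ge |T|$. The only remaining work is to transfer this categoricity of the homogeneous subclass up to the full class $\K_D$ in cardinals $\ge |T|^+$. My plan is to show that every $D$-model of size $\ge |T|^+$ is automatically $(D,|T|)$-homogeneous, so that $(\K_D)_{\ge |T|^+}$ and $(\Ksatp{|T|}_D)_{\ge |T|^+}$ coincide and categoricity carries over unchanged.

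To prove that saturation statement, first use Lemma \ref{categ-superstab} (or Fact \ref{good-frame-fact} applied to $\K_D$) to get stability of $D$ in $|T|$; a routine chain construction then produces a $(D,|T|^+)$-homogeneous (saturated) model of size $|T|^+$, and categoricity in $|T|^+$ forces the unique model of that size to be $(D,|T|^+)$-homogeneous. I would then apply Fact \ref{omit-type} with $\chi = \lambda = |T|^+$ (legitimate since $|T|^+ > \LS(\K)$): because $\sup_{\chi_0 < |T|^+} \hanf{\chi_0} = \hanf{|T|} > |T|^+$, every model of size $\ge |T|^+$ is $(D,|T|^+)$-homogeneous, hence a fortiori $(D,|T|)$-homogeneous. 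Together with Theorem \ref{homog-precise} this yields categoricity of $\K_D$ in all $\lambda' \ge |T|^+$, completing the boundary case.

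The main obstacle is exactly this boundary case, and its difficulty is structural rather than computational: Theorem \ref{abstract-thm-2-proof-0} proceeds via Theorem \ref{main-thm-sat}, which selects an auxiliary $\chi \in (\LS(\K), \lambda)$ --- an empty interval when $\lambda = \LS(\K)^+ = |T|^+$. One is thus forced to work at the Löwenheim--Skolem level $\chi = |T|$, where the general AEC machinery may fail for lack of a superlimit (equivalently a good $|T|$-frame). What makes the homogeneous setting go through at this level is Theorem \ref{chainsat}: under $\kappa(D) = \aleph_0$ (from Lemma \ref{categ-superstab}) the $(D,|T|)$-homogeneous models are closed under unions of chains, which is precisely the hypothesis that lets Theorem \ref{homog-precise} operate at the base cardinal $|T|$. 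The only delicate bookkeeping is keeping the two saturation parameters $(D,|T|)$ and $(D,|T|^+)$ straight across the omitting-types step, but that is routine.
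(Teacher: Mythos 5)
Your argument is correct and rests on the same ingredients as the paper's own proof: Theorem \ref{homog-precise} to get categoricity of $\Ksatp{|T|}_D$ in all $\lambda' \ge |T|$, Theorem \ref{abstract-thm-2-proof-0} for $\lambda > |T|^+$, and Fact \ref{omit-type} to identify $(\K_D)_{\ge |T|^+}$ with $(\Ksatp{|T|}_D)_{\ge |T|^+}$ in the boundary case. The only difference is organizational --- the paper runs Theorem \ref{homog-precise} uniformly for all $\lambda > |T|$ rather than splitting off the case $\lambda = |T|^+$ --- so this is essentially the same proof.
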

\begin{proof}
  By Theorem \ref{homog-precise}, $\Ksatp{|T|}_D$ is categorical in all $\lambda' \ge |T|$. In particular by categoricity in $\lambda$, $\left(\Ksatp{|T|}_D\right)_{\ge \lambda} = (\K_D)_{\ge \lambda}$, so $\K_D$ is categorical in all $\lambda' \ge \lambda$. To see that $\K_D$ is categorical in all $\lambda' \ge \hanf{|T|}$, use Theorem \ref{abstract-thm-2-proof-0} (or just directly Fact \ref{omit-type}).
\end{proof}

\bibliographystyle{amsalpha}
\bibliography{categ-without-shortness}

\providecommand{\bysame}{\leavevmode\hbox to3em{\hrulefill}\thinspace}
\providecommand{\MR}{\relax\ifhmode\unskip\space\fi MR }
\providecommand{\MRhref}[2]{%
  \href{http://www.ams.org/mathscinet-getitem?mr=#1}{#2}
}
\providecommand{\href}[2]{#2}
\begin{thebibliography}{BKV06}

\bibitem[Bal09]{baldwinbook09}
John~T. Baldwin, \emph{Categoricity}, University Lecture Series, vol.~50,
  American Mathematical Society, 2009.

\bibitem[BG]{bg-v11-toappear}
Will Boney and Rami Grossberg, \emph{Forking in short and tame {A}{E}{C}s},
  Annals of Pure and Applied Logic, To appear. URL:
  \url{http://arxiv.org/abs/1306.6562v11}.

\bibitem[BGVV]{shvi-notes-v3-toappear}
Will Boney, Rami Grossberg, Monica VanDieren, and Sebastien Vasey,
  \emph{Superstability from categoricity in abstract elementary classes},
  Annals of Pure and Applied Logic, To appear. URL:
  \url{http://arxiv.org/abs/1609.07101v3}.

\bibitem[BKV06]{b-k-vd-spectrum}
John~T. Baldwin, David Kueker, and Monica VanDieren, \emph{Upward stability
  transfer for tame abstract elementary classes}, Notre Dame Journal of Formal
  Logic \textbf{47} (2006), no.~2, 291--298.

\bibitem[Bon]{tameness-groups}
Will Boney, \emph{Tameness in groups and similar {A}{E}{C}s}, In preparation.

\bibitem[Bon14]{tamelc-jsl}
\bysame, \emph{Tameness from large cardinal axioms}, The Journal of Symbolic
  Logic \textbf{79} (2014), no.~4, 1092--1119.

\bibitem[BS08]{non-locality}
John~T. Baldwin and Saharon Shelah, \emph{Examples of non-locality}, The
  Journal of Symbolic Logic \textbf{73} (2008), 765--782.

\bibitem[BVa]{bv-sat-v3}
Will Boney and Sebastien Vasey, \emph{Chains of saturated models in
  {A}{E}{C}s}, Preprint. URL: \url{http://arxiv.org/abs/1503.08781v3}.

\bibitem[BVb]{counterexample-frame-v2}
\bysame, \emph{Good frames in the {H}art-{S}helah example}, Preprint. URL:
  \url{http://arxiv.org/abs/1607.03885v2}.

\bibitem[BVc]{bv-survey-v4-toappear}
\bysame, \emph{A survey on tame abstract elementary classes}, Beyond first
  order model theory (Jos{\'e} Iovino, ed.), CRC Press, To appear. URL:
  \url{http://arxiv.org/abs/1512.00060v4}.

\bibitem[GL02]{grle-homog}
Rami Grossberg and Olivier Lessmann, \emph{Shelah's stability spectrum and
  homogeneity spectrum in finite diagrams}, Archive for Mathematical Logic
  \textbf{41} (2002), no.~1, 1--31.

\bibitem[Gro02]{grossberg2002}
Rami Grossberg, \emph{Classification theory for abstract elementary classes},
  Contemporary Mathematics \textbf{302} (2002), 165--204.

\bibitem[GV]{gv-superstability-v4}
Rami Grossberg and Sebastien Vasey, \emph{Equivalent definitions of
  superstability in tame abstract elementary classes}, Preprint. URL:
  \url{http://arxiv.org/abs/1507.04223v4}.

\bibitem[GV06a]{tamenessthree}
Rami Grossberg and Monica VanDieren, \emph{Categoricity from one successor
  cardinal in tame abstract elementary classes}, Journal of Mathematical Logic
  \textbf{6} (2006), no.~2, 181--201.

\bibitem[GV06b]{tamenessone}
\bysame, \emph{Galois-stability for tame abstract elementary classes}, Journal
  of Mathematical Logic \textbf{6} (2006), no.~1, 25--49.

\bibitem[GV06c]{tamenesstwo}
\bysame, \emph{Shelah's categoricity conjecture from a successor for tame
  abstract elementary classes}, The Journal of Symbolic Logic \textbf{71}
  (2006), no.~2, 553--568.

\bibitem[Har75]{harniksat}
Victor Harnik, \emph{On the existence of saturated models of stable theories},
  Proceedings of the American Mathematical Society \textbf{52} (1975),
  361--367.

\bibitem[HK06]{finitary-aec}
Tapani Hyttinen and Meeri Kes{\"a}l{\"a}, \emph{Independence in finitary
  abstract elementary classes}, Annals of Pure and Applied Logic \textbf{143}
  (2006), 103--138.

\bibitem[HS00]{hs-independence}
Tapani Hyttinen and Saharon Shelah, \emph{Strong splitting in stable
  homogeneous models}, Annals of Pure and Applied Logic \textbf{103} (2000),
  201--228.

\bibitem[Hyt98]{hyt-categ-homog}
Tapani Hyttinen, \emph{Generalizing {M}orley's theorem}, Mathematical Logic
  Quarterly \textbf{44} (1998), 176--184.

\bibitem[Les00]{lessmann2000}
Olivier Lessmann, \emph{Ranks and pregeometries in finite diagrams}, Annals of
  Pure and Applied Logic \textbf{106} (2000), 49--83.

\bibitem[Les05]{lessmann-upward-transfer}
\bysame, \emph{Upward categoricity from a successor cardinal for tame abstract
  elementary classes with amalgamation}, The Journal of Symbolic Logic
  \textbf{70} (2005), no.~2, 639--660.

\bibitem[Lie11]{lieberman2011}
Michael~J. Lieberman, \emph{A topology for {G}alois types in abstract
  elementary classes}, Mathematical Logic Quarterly \textbf{57} (2011), no.~2,
  204--216.

\bibitem[She70]{sh3}
Saharon Shelah, \emph{Finite diagrams stable in power}, Annals of Mathematical
  Logic \textbf{2} (1970), no.~1, 69--118.

\bibitem[She75]{sh54}
\bysame, \emph{The lazy model theoretician's guide to stability}, Logique et
  Analyse \textbf{18} (1975), 241--308.

\bibitem[She83a]{sh87a}
\bysame, \emph{Classification theory for non-elementary classes {I}: The number
  of uncountable models of $\psi \in {L}_{\omega_1, \omega}$. {P}art {A}},
  Israel Journal of Mathematics \textbf{46} (1983), no.~3, 214--240.

\bibitem[She83b]{sh87b}
\bysame, \emph{Classification theory for non-elementary classes {I}: The number
  of uncountable models of $\psi \in {L}_{\omega_1, \omega}$. {P}art {B}},
  Israel Journal of Mathematics \textbf{46} (1983), no.~4, 241--273.

\bibitem[She87]{sh300-orig}
\bysame, \emph{Universal classes}, Classification theory (Chicago, IL, 1985)
  (John~T. Baldwin, ed.), Lecture Notes in Mathematics, vol. 1292,
  Springer-Verlag, 1987, pp.~264--418.

\bibitem[She99]{sh394}
\bysame, \emph{Categoricity for abstract classes with amalgamation}, Annals of
  Pure and Applied Logic \textbf{98} (1999), no.~1, 261--294.

\bibitem[She09]{shelahaecbook}
\bysame, \emph{Classification theory for abstract elementary classes}, Studies
  in Logic: Mathematical logic and foundations, vol.~18, College Publications,
  2009.

\bibitem[SV99]{shvi635}
Saharon Shelah and Andr{\'e}s Villaveces, \emph{Toward categoricity for classes
  with no maximal models}, Annals of Pure and Applied Logic \textbf{97} (1999),
  1--25.

\bibitem[Vasa]{ap-universal-v10}
Sebastien Vasey, \emph{Shelah's eventual categoricity conjecture in universal
  classes: part {I}}, Preprint. URL: \url{http://arxiv.org/abs/1506.07024v10}.

\bibitem[Vasb]{categ-universal-2-v3-toappear}
\bysame, \emph{Shelah's eventual categoricity conjecture in universal classes:
  part {I}{I}}, Selecta Mathematica, To appear. URL:
  \url{http://arxiv.org/abs/1602.02633v3}.

\bibitem[Vas16a]{indep-aec-apal}
\bysame, \emph{Building independence relations in abstract elementary classes},
  Annals of Pure and Applied Logic \textbf{167} (2016), no.~11, 1029--1092.

\bibitem[Vas16b]{ss-tame-jsl}
\bysame, \emph{Forking and superstability in tame {A}{E}{C}s}, The Journal of
  Symbolic Logic \textbf{81} (2016), no.~1, 357--383.

\bibitem[Vas17]{downward-categ-tame-apal}
\bysame, \emph{Downward categoricity from a successor inside a good frame},
  Annals of Pure and Applied Logic \textbf{168} (2017), no.~3, 651--692.

\bibitem[VV]{vv-symmetry-transfer-v3}
Monica VanDieren and Sebastien Vasey, \emph{Symmetry in abstract elementary
  classes with amalgamation}, Preprint. URL:
  \url{http://arxiv.org/abs/1508.03252v3}.

\end{thebibliography}

\end{document}